\newtheoremstyle{mystyle}               
{}                
{}                
{}        
{}                
{\bfseries \itshape}       
{.}      
{ }      
{}       
\newtheorem{theorem}{Theorem}[section]
\newtheorem{proposition}[theorem]{Proposition}
\newtheorem{lemma}[theorem]{Lemma} 
\newtheorem{corollary}[theorem]{Corollary}
\theoremstyle{definition}
\newtheorem{definition}[theorem]{Definition}
\newtheorem{example}[theorem]{Example}	
\newtheorem{conjecture}[theorem]{Conjecture}	
\theoremstyle{mystyle}
\newtheorem{remark}[theorem]{Remark}
\numberwithin{equation}{section}
\title{Reflection equation with piecewise constant arguments$^{1,2}$}
\date{ }
\author{Alberto Cabada and Paula Cambeses-Franco\\
	$^1$ CITMAga, 15782, Santiago de Compostela, Galicia, Spain\\
	$^2$ Departamento de Estatística, Análise Matemática e Optimización\\
	Facultade de Matem\'aticas, Universidade de Santiago de Com\-pos\-te\-la, Spain.\\
	alberto.cabada@usc.es; paula.cambeses.franco@usc.es}
\begin{document}
	\maketitle
	\begin{abstract}
In this work, we study nonlocal differential equations with  particular focus on those with reflection in their argument and piecewise constant dependence. The approach entails deriving the explicit expression of the solution to the linear problem by constructing the corresponding Green's function, as well as developing a novel formula to delineate the set of parameters involved in the analyzed equations for which the Green's function exhibits a constant sign. Furthermore, we demonstrate the existence of solutions for nonlinear problems through the utilisation of the monotone method.

The aforementioned methodology is specifically applied to the linear problem with periodic conditions $v'(t) + mv(-t) + Mv([t]) = h(t)$ for $t \in [-T,T]$, proving several existence results for the associated nonlinear problem and precisely delimiting the region where the Green's function $H_{m,M}$ has a constant sign on its domain of definition. 

The equations studied have the potential to be applied in fields such as biomedicine or quantum mechanics. Furthermore, this work represents a significant advance, as it is, as far the authors know, the first time that equations with involution and piecewise constant arguments have been studied together.
	\end{abstract}

		\noindent{\bf AMS Subject Classifications:}  34B05, 34B08, 34B10, 34B15, 34B18, 34B27.

	\noindent{\bf Keywords:} Green’s function, equation with reflection, piecewise constant arguments, constant sign solutions, explicit expression.

	\section{Introduction}
In the expansive and continuously evolving field of mathematics, differential equations occupy a pivotal position in the modelling and comprehension of phenomena across a range of disciplines. However, the difficulties associated with modelling complex systems, where interactions are not solely local or where changes occur in discrete intervals, have prompted the development of novel classes of differential equations. Notable among these are nonlocal differential equations and differential equations with piecewise constant arguments, which have applications in diverse fields including quantum mechanics and biomedicine.

The concept of non-locality has been one of the most fascinating and controversial topics in the field of modern physics. Albert Einstein was profoundly sceptical about the notion that physical effects could be transmitted instantaneously over a distance. This scepticism is reflected in his phrase ``spooky action at a distance'' \cite{chakrabarti2021there}, which he used to describe quantum entanglement, a phenomenon in which two entangled particles can influence each other instantaneously, regardless of the distance between them. Einstein insisted that a complete theory of the universe should respect the principle of locality, where ``nothing can travel faster than light'' \cite{magueijo2011faster}.

Despite his reservations, the predictions of quantum mechanics challenged this intuition. The EPR (Einstein-Podolsky-Rosen) experiment, proposed by Einstein himself in collaboration with Boris Podolsky and Nathan Rosen in 1935 \cite{einstein1935can}, sought to demonstrate that quantum mechanics was incomplete precisely because it permitted the existence of nonlocal correlations. In his own words, ``I cannot believe that God plays dice with the universe,'' reflecting his disagreement with the probabilistic and nonlocal nature of quantum predictions. Despite Einstein's opposition, subsequent experiments, including those conducted by John Bell \cite{bell1964einstein} and Alain Aspect \cite{aspect1982experimental},  have validated the reality of quantum entanglement and established nonlocal effects as a fundamental aspect of the quantum realm.

Moving away from physics, such types of equations also appear in biomedicine. Non-locality manifests itself in the way we model complex systems that cannot be fully described by local interactions. For example, the dynamics of infectious disease transmission may involve nonlocal effects due to global human mobility and long-range interactions between individuals.

A key approach to modelling complex systems is the use of differential equations with piecewise constant arguments. These equations model situations where the dependent variables or their derivatives are evaluated at discrete points in time and/or space, usually following periodic or constant subdivisions of the domain. This approach is particularly useful for systems where events occur at regular intervals or where conditions change abruptly.

These equations are used extensively in control engineering, where systems are monitored and adjusted at regular intervals. A typical example is digital controllers that sample state variables and make corrections at discrete intervals. In biomedicine, they are used to model physiological rhythms, such as the electrical activity of the heart, which is measured at regular intervals, or in drug dosing, where doses are administered at fixed times \cite{murray2002Mathematical}. In economics, these equations are fundamental in describing phenomena where adjustments are made at specific times, such as price adjustments at regular intervals or investment decisions made at discrete points in time \cite{goodwin1951nonlinear}.

In light of the aforementioned considerations, this paper will concentrate on the study of functional equations with involution and piecewise constant dependence. Involution equations are a particular type of nonlocal equation characterized by the fact that the function composed with itself is the identity. Two of the most prominent examples of involution are reflection and inversion. The study of these differential equations commenced with Silberstein in 1940, who analyzed the resolution of the equation $v'(t) = v(\frac{1}{t})$ \cite{silberstein1940xvii}. Since then, many authors have worked on this type of problem \cite{aftabizadeh1988bounded, hendersonnontrivial, o1994existence, cabada2015differential}. On the other hand, the study of differential equations with piecewise constant arguments began in the early 1980s and has since been widely treated in the literature \cite{myshkis1977certain, torres2024oscillations,abildayeva2022multi,cabada2004green, buedo2024boundary, MR2916144}.  In their study, techniques from both differential equations and difference equations have been combined. This work constitutes, as far as we know, the first time these types of equations are analyzed together. Let's approach this problem using the theory of Green's functions and analyzing their properties. We will focus, in particular, on studying solutions with a constant sign. This phenomenon is of interest because many quantities that arise in the context of the problems modelled by the equations considered can only take non-negative values. These include, for example, pressure, power, temperature in Kelvin and the number of people affected by a disease. 

We will structure the article as follows. In Section \ref{sec2}, we will introduce the necessary preliminaries for the rest of the work. Subsequently, in Section \ref{sec3}, we will derive the expression for the Green's function for a problem that involves both involution and reflection. In Section \ref{sec4}, we will study the properties of these functions and present a new method that enables us to delineate the region where the function maintains a constant sign. This method will be applied to a previously known case on piecewise equalities. Next, in Section \ref{sec5}, we will analyze a first-order differential equation with reflection and piecewise constant arguments, focusing on the region where its Green's function exhibits a constant sign. Finally, in Section \ref{sec6}, we will apply the concepts introduced and study the existence of a solution for a nonlinear problem using the monotone method related to the existence of lower and upper solutions. We will also provide a numerical approximation of this method.

\section{Preliminaries, hypotheses and main assumptions}
\label{sec2}
The utilisation of Green's functions is an invaluable tool in the resolution of ordinary differential equations. Accordingly, we will initially present the concept of Green's functions and subsequently examine their properties.

The following general problem of order $n$ is considered
\begin{equation}\label{a1}
L_n\,v(t)=\sigma(t),\;  \textup{ a.e } t\in J,\quad V_i(v)=h_i,\; i=1,\ldots,n,
\end{equation}
along with the two-point boundary conditions
\begin{equation}
\label{a2}
V_i(v)\equiv \sum_{j=0}^{n-1}\left(\alpha_j^i\, v^{(j)}(a) + \beta_j^i\,v^{(j)}(b)\right),\qquad i=1,\ldots,n,
\end{equation}
where
\begin{equation}
\label{a3}
L_n\,v(t) \equiv v^{(n)}(t)+a_1(t)\,v^{(n-1)}(t)+\cdots+a_{n-1}(t)\,v^\prime(t)+a_n(t)\,v(t),\quad  t\in J:=[a,b],
\end{equation}
being \(\alpha_j^i\), \(\beta_j^i\), and \(h_i\) real constants for all \(i = 1, \ldots, n\) and \(j = 0, \ldots, n-1\), and \(\sigma, \text{ } a_k \in \mathcal{L}^1(J)\) for all \(k = 1, \ldots, n\), being \(\mathcal{L}^1(J)\) the set of 1-integrable functions, i.e.:
\begin{equation*}
\mathcal{L}^{1}(J)=\{f \textup{ is a Lebesgue measurable function on $J$ and } \int_{J}{|f|}< \infty\}.
\end{equation*}

In this situation, we seek solutions that belong to the space 
$$W^{n,1}(J)=\{v \in {\cal C}^{n-1}(J), \textup{ }  v^{(n-1)} \in \mathcal{AC}(J)\},$$
where $\mathcal{AC}(J)$ is the set of absolutely continuous functions on $J$. 

%

In this case, when the uniqueness of solutions of Problem \eqref{a1}--\eqref{a2} can be guaranteed, such solution can be written in the form \(v(t) = L_{n}^{-1}\sigma(t)\). It is in this context that we can refer to the Green's function, \(G: J \times J \rightarrow \mathbb{R}\), associated with the linear problem of order \(n\) \eqref{a1}--\eqref{a2}. This function, in case it exists, is unique and corresponds to the integral kernel of the inverse operator \(L_{n}^{-1}\), meaning that it satisfies
\begin{equation*}
v(t)=\int_{a}^{b}{G(t,s)\sigma(s)\mathrm{d}s}, \textup{ for all }t \in J.
\end{equation*}

In reference \cite[Section 1.4]{cabada2014green}, it is proved a number of properties that the Green's function must satisfy, which allows us to define it axiomatically as follows.

\begin{definition}
We say that $G \in \mathcal{C}^{n-2}(J \times J) \cap \mathcal{C}^{n}\left((J \times J) \setminus \{(t,t), t \in J\}\right)$ is the Green's function related to Problem \eqref{a1}--\eqref{a2} if and only if it is a solution of problem
\begin{equation*}
L_{n}(G(t,s))=0, \quad t \in J \setminus \{s\}, \quad V_i(G(\cdot,s)) = 0, \, i = 1,\ldots,n,
\end{equation*}
for any $s \in (a,b)$ fixed.
Moreover, it satisfies the following jump condition at the diagonal of the square of definition:
For each $t \in (a,b)$, there exist and are finite, the lateral limits
    $$
    \frac{\partial^{n-1}}{\partial t^{n-1}} G(t^-,t) = \frac{\partial^{n-1}}{\partial t^{n-1}} G(t,t^+) \quad \text{and} \quad \frac{\partial^{n-1}}{\partial t^{n-1}} G(t,t^-) = \frac{\partial^{n-1}}{\partial t^{n-1}} G(t^+,t),
    $$
and additionally,
    $$
    \frac{\partial^{n-1}}{\partial t^{n-1}} G(t^+,t) - \frac{\partial^{n-1}}{\partial t^{n-1}} G(t^-,t) = \frac{\partial^{n-1}}{\partial t^{n-1}} G(t,t^-) - \frac{\partial^{n-1}}{\partial t^{n-1}} G(t,t^+) = 1.
    $$
  
\label{d1}

Moreover, if the homogeneous problem \eqref{a1}--\eqref{a2} ($\sigma=0$ on $J$ and $h_{i}=0$, $i=1, \ldots, n$) has as a unique solution the trivial one, the Green's function exists and is unique. 
\end{definition}

This work will address both ordinary differential equations and nonlocal equations involving involution and piecewise constant arguments. Accordingly, we will undertake a review of the concept of involution \cite{wiener2002glimpse} and the methodology employed in the study of differential equations with involution.

\begin{definition}
Let $A \subset \mathbb{R}$ be a set containing more than one point and $f:A \rightarrow A$ a function such that $f$ is not the identity $Id$. Then, $f$ is an involution if and only if
\begin{equation*}
f^{2} \equiv f \circ f = Id \textup{ on } A
\end{equation*}
or, equivalently, if
\begin{equation*}
f = f^{-1} \textup{ on } A.
\end{equation*}
If $A = \mathbb{R}$, we say that $f$ is a strong involution.
\end{definition}

Following the theoretical framework presented in \cite[Section $3$]{cabada2013comparison} and \cite[Section $1.3.2$]{cabada2015differential}, we can transform differential equations with involution into expressions that have the same form as Problem \eqref{a1}--\eqref{a2}, which we already know how to solve.

We will attempt to study equations with reflection, specifically focusing on analyzing problems of the form
\begin{equation}
L_n\,v(t) + m\,v(-t) = \sigma(t), \; t \in \hat{J}=[-T,T], \quad V_i(v) = h_i, \; i=1,\ldots,n,
\label{cambiado}
\end{equation}
and, moreover, we will also discuss problems with piecewise constant arguments as the following ones \cite{nieto2005green, cabada2004green}:
\begin{equation}
L_n\,v(t) + M\,v([t]) = \sigma(t), \; t \in \hat{J}, \quad V_i(v) = h_i, \; i=1,\ldots,n,
\label{cambiado2}
\end{equation}
with $m$, $M \in \mathbb{R}$, $V_i$ and $L_n$ defined in \eqref{a2} and \eqref{a3}, respectively, $T>0$, and $\sigma \in \mathcal{L}^{1}(\hat{J})$. The function $[t]$ is given by
$$[t]=
\left\{
\begin{array}{rll}
n, & \mbox{\rm if}  & t \in [n,n+1), \\
-n, & \mbox{\rm if}  & t \in (-n-1,-n],
\end{array}
\right.$$
where $n \in \{0,1,2, \ldots\}$. Notice that $[t]=0$ for all $t \in (-1,1)$.

We will denote by $\Lambda$ the set of all functions $v:\hat{J} \rightarrow \mathbb{R}$ that are continuous on $\tilde{J}_{n}=[-T,[-T]) \cup [[-T],[-T]+1) \cup \ldots \cup [-2,1) \cup [-1,1) \cup [1,2) \cup \ldots \cup [[T]-1,[T]) \cup [[T],T]$, and such that, for every
\begin{equation*}
	t \in \{[-T], \ldots, -1,1, \ldots, [T]\}
\end{equation*}
for which $t^- \in \hat{J}$, $v(t^-) \in \mathbb{R}$ exists. Additionally, if $v \in \Lambda$, we understand that $v(t)=v(t^{+})$ for all $t \in \{[-T], \ldots, -1,1, \ldots, [T]\}$, $t^+ \in \hat{J}$.

\begin{remark}
	If $T \in \mathbb{N}$, then the points $[-T]$ and $[T]$ lie at the boundaries of $\hat{J}$, and therefore the limits $v([-T]^-)$ and $v([T]^+)$ have no sense. In such a case, we look for solutions $v \in \mathcal{C}([-T,-T+1]) \cap \mathcal{C} ([T-1,T])$.
\end{remark}
For all $r \in \{1,2, \ldots\}$, let $\Omega^{r}$ denote the set of all functions $v:\hat{J}  \rightarrow \mathbb{R}$ such that $v \in W^{r,1}(\hat{J})$ and $v^{(r)} \in \Lambda$. 

With all this at hand, we are able to study equations that combine involutions and piecewise constant arguments.

\section{Green's functions of equations with piecewise constant arguments and involution.}
\label{sec3}
Once the theoretical formalism of Green's functions is understood and equations with involution and with piecewise constant arguments were introduced, we are now in a position to combine problems \eqref{cambiado} and \eqref{cambiado2} and study the following case:
\begin{equation}\label{b1}
L_n\,v(t)+m\,v(-t)+M\,v([t])=\sigma(t),\;  \textup{ a.e }t\in \hat{J} ,\quad V_i(v)=h_i,\; i=1,\ldots,n,
\end{equation}
where $m$ and $M \in \mathbb{R}$, and $V_{i}$ and $L_{n}$ are defined in \eqref{a2} and \eqref{a3}.

We will say that a function $v: \hat{J} \rightarrow \mathbb{R}$ is a solution of Problem \eqref{b1} if $v \in \Omega^{n}$ and satisfies equation \eqref{b1}.

We see that, now, the expression depends on two real parameters $m$ and $M$. Throughout the rest of the work, we aim to analyze the properties of the Green's function for Problem \eqref{b1} and determine for which values of $m$ and $M$ it has constant sign on $\hat{J} \times \hat{J}$.
Now, we assume that both problems, \eqref{cambiado} and \eqref{b1}, has a unique solution for any $\sigma \in L^{1}(\hat{J})$.


%

Next, we will see how we can approach this problem by using Green's functions.

Let us assume that $G_{m}(t,s)$ is the Green's function corresponding to the involution Problem \eqref{cambiado}, and we attempt to find a new Green's function associated with the new Problem \eqref{b1}, which we will denote by $H_{m,M}(t,s)$. We are interested in expressing $H_{m,M}(t,s)$ in terms of $G_{m}(t,s)$.

By the definition of Green's function, we have that $v(t)$ is a solution of Problem \eqref{b1} if and only if
\begin{equation*}
v(t)=\int_{-T}^{T}{G_{m}(t,s)\left(\sigma(s)-Mv([s]) \right) \mathrm{d}s}.
\end{equation*}

Let $l=[t] \in \{[-T], \ldots, 0, \ldots, [T]\}$, then we can write
\begin{equation}
\begin{split}
v(l) &= \int_{-T}^{T} G_{m}(l,s) \sigma(s) \, \mathrm{d}s - M \Bigg( \int_{-T}^{[-T]} G_{m}(l,s) v([-T]) \, \mathrm{d}s 
+ \int_{[-T]}^{[-T]+1} G_{m}(l,s) v([-T+1]) \, \mathrm{d}s \\
&\quad + \cdots 
+ \int_{-1}^{0} G_{m}(l,s) v([0]) \, \mathrm{d}s+ \int_{0}^{1} G_{m}(l,s) v([0]) \, \mathrm{d}s + \int_{1}^{2} G_{m}(l,s) v([1]) \, \mathrm{d}s \\
& \quad+ \cdots 
+ \int_{[T]}^{T} G_{m}(l,s) v([T]) \, \mathrm{d}s \Bigg).
\end{split}
\label{ex1}
\end{equation}

For simplicity, we denote:
\begin{align*}
 h(l) &\equiv \int_{-T}^{T} G_{m}(l,s) \sigma(s) \, \mathrm{d}s, \\
 a_{l,[-T]} &\equiv \int_{-T}^{[-T]} G_{m}(l,s) \, \mathrm{d}s,\\ 
 a_{l,[T]} &\equiv \int_{[T]}^{T} G_{m}(l,s) \, \mathrm{d}s, \\
 a_{l,0} &\equiv \int_{\max\{-T,-1\}}^{\min\{T,1\}} G_{m}(l,s) \, \mathrm{d}s \\
 \end{align*}
and  \begin{align*}
a_{l,k} &\equiv \int_{k-1}^{k} G_{m}(l,s) \mathrm{d}s \textup{ for }k \in \{[-T]+1, \ldots , [T]-1\} \setminus \{0\}.
 \end{align*}
 
Note that, in the particular case where $T \leq 1$, we will consider:
\begin{equation*}
	a_{0,[-T]}=a_{0,[T]}=a_{0,0}=\int_{-T}^{T}{G_m(0,s) \mathrm{d}s}.
\end{equation*}
 
In this way, we can rewrite the previous equation for all $l \in \{[-T], \ldots, [T]\}$ as follows:
\begin{align*}
v(l)=h(l)&-Ma_{l,[-T]}v([-T])-Ma_{l,[-T+1]}v([-T+1])- \cdots  \\
&-Ma_{l,0}v([0])-Ma_{l,1}v([1])- \cdots -Ma_{l,[T]}v([T]).
\end{align*}
From the above, we deduce that the following matrix equation holds:
\begin{equation*}
Ac=b,
\end{equation*}
where $A$, $b$ and $c$ are given by the following expressions:
\begin{equation}
A \equiv
\begin{pmatrix}
    Ma_{[-T][-T]}+1 & Ma_{[-T][-T+1]} & \cdots & Ma_{[-T]0} & \cdots & Ma_{[-T][T]} \\
    Ma_{[-T+1][-T]} & Ma_{[-T+1][-T+1]}+1 & \cdots & Ma_{[-T+1]0} &  \cdots & Ma_{[-T+1][T]} \\
    \vdots  & \vdots & \cdots & \vdots & \ldots & \vdots \\
    \vdots  & \vdots & \cdots & \vdots & \ldots & \vdots \\
    Ma_{[T][-T]} & Ma_{[T][-T+1]} & \cdots & Ma_{[T]0} & \cdots & Ma_{[T][T]}+1
\end{pmatrix},
\label{matriz}
\end{equation}
\begin{equation*}
c \equiv 
\begin{pmatrix}
v([-T]) \\ 
\vdots \\ 
v([0]) \\
\vdots \\
v([T])
\end{pmatrix}
\textup{ and }
b \equiv
\begin{pmatrix} 
h([-T]) \\ 
\vdots \\ 
h([0]) \\
\vdots \\
h([T])
\end{pmatrix}.
\end{equation*}

When the matrix $A$ is invertible, we have that $c=A^{-1}b$. 

From now on, we will denote the elements of $A^{-1}$ as $\tilde{a}_{i,j}$. Specifically:
\begin{equation*}
A^{-1}=
\begin{pmatrix}
    \tilde{a}_{[-T][-T]} & \tilde{a}_{[-T][-T+1]} & \cdots & \tilde{a}_{[-T]0} & \cdots & \tilde{a}_{[-T][T]} \\
    \tilde{a}_{[-T+1][-T]} & \tilde{a}_{[-T+1][-T+1]} & \cdots & \tilde{a}_{[-T+1]0} &  \cdots & \tilde{a}_{[-T+1][T]} \\
    \vdots  & \vdots & \cdots & \vdots & \ldots & \vdots \\
    \vdots  & \vdots & \cdots & \vdots & \ldots & \vdots \\
    \tilde{a}_{[T][-T]} & \tilde{a}_{[T][-T+1]} & \cdots & \tilde{a}_{[T]0} & \cdots & \tilde{a}_{[T][T]}
\end{pmatrix}.
\end{equation*}
We now consider and arbitrary $T>0$ and calculate the explicit expression of $v(t)$. We start again from expression \eqref{ex1}. If we denote by $\chi_{I}(s)$ the indicator function of an interval $I$, that is:
$$\chi_{I}(s)=
\left\{
\begin{array}{lll}
1, & \mbox{\rm if}  & s \in I \\
0, & \mbox{\rm if}  & s \notin I,
\end{array}
\right.$$
we can rewrite $v(t)$ for all $t \in \hat{J}$ as follows:
\begin{align*}
v(t) &= \int_{-T}^{T} G_{m}(t,s) \sigma(s) \, \mathrm{d}s - M \Bigg( \int_{-T}^{T} G_{m}(t,s) v([-T]) \chi_{[-T,-[T]]}(s) \\
& \quad +  G_{m}(t,s) v([-T+1]) \,\chi_{(-[T],[-T+1]]}(s) + \cdots 
+ G_{m}(t,s) v([0]) \chi_{(-1,1)}(s) \\
& \quad +  G_{m}(t,s) v([1]) \chi_{[1,2)}(s)\, 
+ \cdots 
+  G_{m}(t,s) v([T]) \chi_{[[T],T]}(s) \, \mathrm{d}s \Bigg).
\end{align*}

Next, we take into account that $c=A^{-1}b$, so
\begin{align*}
v(t) &= \int_{-T}^{T} G_{m}(t,s) \sigma(s) \, \mathrm{d}s- M \Bigg( \int_{-T}^{T} G_{m}(t,s) \bigg(\tilde{a}_{[-T][-T]} \int_{-T}^{T} G_{m}([-T],s) \sigma(s) \, \mathrm{d}s + \cdots \\
&\quad + \tilde{a}_{[-T]0} \int_{-T}^{T} G_{m}(0,s) \sigma(s) \, \mathrm{d}s + \cdots + \tilde{a}_{[-T][T]} \int_{-T}^{T} G_{m}([T],s) \sigma(s) \, \mathrm{d}s \bigg) \chi_{[-T,-[T]]}(s) \\
&\quad  + \cdots + G_{m}(t,s) \bigg(\tilde{a}_{0[-T]} \int_{-T}^{T} G_{m}([-T],s) \sigma(s) \, \mathrm{d}s + \cdots \\
&\quad + \tilde{a}_{00} \int_{-T}^{T} G_{m}(0,s) \sigma(s) \, \mathrm{d}s + \cdots + \tilde{a}_{0[T]} \int_{-T}^{T} G_{m}([T],s) \sigma(s) \, \mathrm{d}s \bigg) \chi_{(-1,1)}(s)+ \cdots \\
&\quad + G_{m}(t,s) \bigg(\tilde{a}_{[T][-T]} \int_{-T}^{T} G_{m}([-T],s) \sigma(s) \, \mathrm{d}s + \cdots \\
&\quad + \tilde{a}_{[T]0} \int_{-T}^{T} G_{m}(0,s) \sigma(s) \, \mathrm{d}s + \cdots + \tilde{a}_{[T][T]} \int_{-T}^{T} G_{m}([T],s) \sigma(s) \, \mathrm{d}s \bigg) \chi_{[[T],T]}(s) \, \mathrm{d}s \Bigg).
\end{align*}

Rearranging the previous equation, we would have that
{\fontsize{10}{11}{
\begin{align*}
v(t) &= \int_{-T}^{T} G_{m}(t,s) \sigma(s) \, \mathrm{d}s - M \Bigg( \int_{-T}^{T} G_{m}(t,s) \bigg( \int_{-T}^{T} G_{m}([-T],r) \sigma(r) \Big[ \tilde{a}_{[-T][-T]}\chi_{[-T,[-T]]}(s) + \cdots \\
 & \quad + \tilde{a}_{0[-T]}\chi_{(-1,1)}(s) + \cdots + \tilde{a}_{[T][-T]}\chi_{[[T],T]}(s)  \, \mathrm{d}r \bigg) + \cdots \\
& \quad +  \int_{-T}^{T} G_{m}(0,r) \sigma(r) \Big[ \tilde{a}_{[-T]0}\chi_{[-T,[-T]]}(s) + \cdots + \tilde{a}_{00}\chi_{(-1,1)}(s) + \cdots + \tilde{a}_{[T]0}\chi_{[[T],T]}(s) \Big] \, \mathrm{d}r  + \cdots \\
& \quad +  \int_{-T}^{T} G_{m}([T],r) \sigma(r) \Big[ \tilde{a}_{[-T][T]}\chi_{[-T,[-T]]}(s) + \cdots + \tilde{a}_{0[T]}\chi_{(-1,1)}(s) + \cdots + \tilde{a}_{[T][T]}\chi_{[[T],T]}(s) \Big] \, \mathrm{d}r \bigg) \mathrm{d}s \Bigg).
\end{align*}
}}

Now, since previous equation is fulfilled for all $\sigma \in \mathcal{L}^{1}(\hat{J} )$, we conclude that for any $k \in \{[-T], \ldots, [T]\}$, the following equality is satisfied:
{\fontsize{10}{11}{
\begin{align*}
& \int_{-T}^{T} G_{m}(t,s) \bigg( \int_{-T}^{T} G_{m}(k,r) \sigma(r) \Big[ \tilde{a}_{[-T]k}\chi_{[-T,[-T]]}(s) + \cdots + \tilde{a}_{0k}\chi_{(-1,1)}(s) + \cdots + \tilde{a}_{[T]k}\chi_{[[T],T]}(s) \Big] \, \mathrm{d}r \bigg)\mathrm{d}s \\
& = \int_{-T}^{T} \bigg(\int_{-T}^{T} G_{m}(t,s) G_{m}(k,r) \sigma(r) \Big[ \tilde{a}_{[-T]k}\chi_{[-T,[-T]]}(s) + \cdots + \tilde{a}_{0k}\chi_{(-1,1)}(s) + \cdots + \tilde{a}_{[T]k}\chi_{[[T],T]}(s) \Big] \, \mathrm{d}r \bigg) \mathrm{d}s \\
& = \int_{-T}^{T} \bigg(\int_{-T}^{T} G_{m}(t,r) G_{m}(k,s) \sigma(s) \Big[ \tilde{a}_{[-T]k}\chi_{[-T,[-T]]}(r) + \cdots + \tilde{a}_{0k}\chi_{(-1,1)}(r) + \cdots + \tilde{a}_{[T]k}\chi_{[[T],T]}(r) \Big] \, \mathrm{d}s \bigg)  \mathrm{d}r.
\end{align*}
}}

Observing the previous expression, it is easy to deduce that the Green's function corresponding to Problem \eqref{b1}, namely $H_{m,M}(t,s)$, is given by the expression
\begin{equation}
\begin{aligned}
H_{m,M}(t,s)&=G_{m}(t,s)\\
& \quad -M \int_{-T}^{T}{G_{m}(t,r) \mathrm{d}r}\bigg(G_{m}([-T],s)\Big[\tilde{a}_{[-T][-T]} \chi_{[-T,-[T]]}(r)\\
& \quad+ \cdots+ \tilde{a}_{0[-T]} \chi_{(1,1)}(s)+ \cdots  +\tilde{a}_{[T][-T]} \chi_{[[T],T]}(r) \Big] \\
& \quad + \cdots + G_{m}(0,s)\Big[\tilde{a}_{[-T]0} \chi_{[-T,-[T]]}(r)+ \cdots+ \tilde{a}_{00} \chi_{(1,1)}(s)+ \cdots +\tilde{a}_{[T]0} \chi_{[[T],T]}(r) \Big] \\
& \quad + \cdots +G_{m}([T],s)\Big[\tilde{a}_{[-T][T]} \chi_{[-T,-[T]]}(r)+ \cdots+ \tilde{a}_{0[T]} \chi_{(1,1)}(r)+\cdots +\tilde{a}_{[T][T]} \chi_{[[T],T]}(r) \Big] 
\bigg).
\end{aligned}
\label{g1}
\end{equation}
Finally, we define some new quantities with the main goal of simplifying the notation. Thus, we introduce the following functions defined on $\hat{J} $ and taking values on $\mathbb{R}$:
\begin{equation}
\begin{aligned}
\alpha_{[-T]j}(r) &= \tilde{a}_{[-T]j}\chi_{[-T,-[T]]}(r), \\
\alpha_{[-T+1]j}(r) &= \tilde{a}_{[-T+1]j}\chi_{(-[T],[-T+1]]}(r), \\
&\vdots \\
\alpha_{0j}(r) &= \tilde{a}_{0j}\chi_{\left(\max\{-T,-1\},\min\{T,1\}\right)}(r), \\
&\vdots \\
\alpha_{[T]j}(r) &= \tilde{a}_{[T]j}\chi_{[[T],T]}(r).
\end{aligned}
\label{g2}
\end{equation}
for all $j \in \{[-T],[-T+1], \ldots, 0, \ldots, [T]\}$.

Taking all of the above into account and using expressions \eqref{g1} and \eqref{g2}, we obtain the explicit expression of the function $H_{m,M}$ in terms of $G_{m}$ as follows:
\begin{equation}
H_{m,M}(t,s)=G_{m}(t,s)-M \Big[\sum_{j=[-T]}^{[T]} \sum_{i=[-T]}^{[T]}{G_{m}(j,s)\int_{-T}^{T}{G_{m}(t,r)} \alpha_{ij}(r)} \mathrm{d}r\Big].
\label{final1}
\end{equation}

Next, we will write the particular case where $T \in (0,1]$. For these values of $T$, the calculation is simpler and yields a more convenient expression to work with.

Provided that $1+M \int_{-T}^{T}{G_{m}(0,r) \mathrm{d}r} \neq 0$, we have:
\begin{equation*}
	\alpha_{0,0}(r)=\frac{1}{1+M \int_{-T}^{T}{G_m(0,r) \mathrm{d}r}}
\end{equation*}
and, following equation \eqref{final1}, we arrive at
\begin{equation*}
v(t) = \int_{-T}^{T} G_{m}(t, s) \, \sigma(s) \, \mathrm{d}s 
- M  \frac{
	\displaystyle \int_{-T}^{T} G_{m}(0, s) \, \sigma(s) \, \mathrm{d}s
}{
	1 + M \displaystyle \int_{-T}^{T} G_{m}(0, r) \, \mathrm{d}r
}  \displaystyle \int_{-T}^{T} G_{m}(t, r) \, \mathrm{d}r.
\end{equation*}
From the above, we can conclude that
\begin{equation}
	H_{m,M}(t,s) = G_{m}(t,s) - M 
	\frac{
		\displaystyle \int_{-T}^{T} G_{m}(t,r) \, \mathrm{d}r
	}{
		1 + M \displaystyle \int_{-T}^{T} G_{m}(0,r) \, \mathrm{d}r
	} \, G_{m}(0,s).
	\label{final2}
\end{equation}

As we will see next, expressions \eqref{final1} and \eqref{final2} will be of great importance, as they will allow us to deduce properties of the function  $H_{m,M}(t,s)$ from the already known Green's function $G_{m}(t,s)$.

\section{Constant sign characterization and properties of Green's functions}
\label{sec4}

Throughout this section we will develop techniques and comparison principles to obtain the explicit expression of the Green's functions related to different problems and to analyze the set of parameters involved, with a view to establishing whether they have a constant sign on the square $\hat{J}  \times \hat{J}$. 

We will start by deducing a formula that, in a particular case, allows us to establish a relationship between the Green's functions $H_{m_{0},M_{0}}$ and $H_{m_{1},M_{1}}$ associated with the problem \eqref{b1} for the parameters $m=m_{0}$ and $M=M_{0}$, and $m=m_{1}$ and $M=M_{1}$, respectively. Subsequently, a novel methodology will be devised with the objective of establishing a relationship between the parameters $m$ and $M$ associated with problem \eqref{b1} that delineates the regions where the Green's function exhibits a constant sign. Finally, we will apply this new methodology to a previously studied problem and verify that it yields the already known results.

\subsection{Relationship between Green's functions $H_{m_{0},M_{0}}$ and $H_{m_{1},M_{1}}$}
In this part, we will deduce a relationship between the Green's functions associated with Problem \eqref{b1} as a function of the parameters $m$ and $M \in \mathbb{R}$. We will follow the ideas developed in \cite{cabada2024explicit} for a general nth-order linear ordinary differential equation.

Let $M_{0}^{i}$, $M_{1}^{i}$, $m_{0}^{i}$ and $m_{1}^{i}$, $i=1, \ldots n-1$, be real constants. We consider the following two distinct problems, for which we assume that both have a unique solution for any $\sigma \in L^{1}(\hat{J})$:
\begin{equation}
L_{n}v_{0}(t)+\sum_{i=0}^{n-1}m_{0}^{i}\,v_{0}^{(i)}(-t)+\sum_{i=0}^{n-1}M_{0}^{i}\,v_{0}^{(i)}([t])=\sigma(t), \quad t \in \hat{J} , \quad V_{i}(v_{0})=0. \label{pp5}, \\
\end{equation}
and
\begin{equation}
L_{n}v_{1}(t)+\sum_{i=0}^{n-1} m_{1}^{i}\,v_{1}^{(i)}(-t)+\sum_{i=0}^{n-1} M_{1}^{i}\,v_{1}^{(i)}([t])=\sigma(t), \quad t \in \hat{J} , \quad V_{i}(v_{1})=0. \label{pp6}
\end{equation}

From the two previous expressions, we arrive at the following equality for a.e. $t \in \hat{J}$:
\begin{align*}
&L_{n}v_{0}(t)+\sum_{i=0}^{n-1} m_{1}^{i}v_{0}^{(i)}(-t)+\sum_{i=0}^{n-1} M_{1}^{i}v_{0}^{(i)}([t]) \\
& \quad =\sum_{i=0}^{n-1}(m_{1}^{i}-m_{0}^{i})v_{0}^{(i)}(-t)+\sum_{i=0}^{n-1}(M_{1}^{i}-M_{0}^{i})v_{0}^{(i)}([t])+ \sigma(t), \textup{ a.e } t \in \hat{J}, \quad V_{i}(v_{0})=0.
\end{align*}
Let $H_{m_{0}^{i},M_{0}^{i}}(t,s)$ denote the Green's function associated with Problem \eqref{pp5} and $H_{m_{1}^{i},M_{1}^{i}}(t,s)$ denote the Green's function associated with Problem \eqref{pp6}. It follows that
\begin{align*}
v_{0}(t)= \int_{-T}^{T}{H_{m_{0}^{i},M_{0}^{i}}(t,s)  \sigma(s) \mathrm{d}s}, t \in \hat{}J\\
v_{1}(t)= \int_{-T}^{T}{H_{m_{1}^{i},M_{1}^{i}}(t,s)  \sigma(s) \mathrm{d}s}, t \in \hat{J}.
\end{align*}

Therefore, we have that
\begin{align*}
v_{0}(t) &= \int_{-T}^{T} H_{m_{1}^{i},M_{1}^{i}}(t,s) \left( \sum_{i=0}^{n-1} (m_{1}^{i}-m_{0}^{i}) v_{0}^{(i)}(-s) + \sum_{i=0}^{n-1} (M_{1}^{i}-M_{0}^{i}) v_{0}^{(i)}([s]) \right) \, \mathrm{d}s  \\
& \quad+ \int_{-T}^{T} H_{m_{1}^{i},M_{1}^{i}}(t,s) \sigma(s) \, \mathrm{d}s \\
&= \sum_{i=0}^{n-1} (m_{1}^{i} - m_{0}^{i}) \int_{-T}^{T} H_{m_{1}^{i},M_{1}^{i}}(t,s) \left( \int_{-T}^{T} \frac{\partial^{i}}{\partial s^{i}} H_{m_{0}^{i},M_{0}^{i}}(-s,r) \sigma(r) \, \mathrm{d}r \right) \, \mathrm{d}s \\
&\quad + \sum_{i=0}^{n-1} (M_{1}^{i} - M_{0}^{i}) \int_{-T}^{T} H_{m_{1}^{i},M_{1}^{i}}(t,s) \left( \int_{-T}^{T} \frac{\partial^{i}}{\partial s^{i}} H_{m_{0}^{i},M_{0}^{i}}([s],r) \sigma(r) \, \mathrm{d}r \right) \, \mathrm{d}s \\
&\quad + \int_{-T}^{T} H_{m_{1}^{i},M_{1}^{i}}(t,s) \sigma(s) \, \mathrm{d}s \\
&= \sum_{i=0}^{n-1} (m_{1}^{i} - m_{0}^{i}) \int_{-T}^{T} \left( \int_{-T}^{T} H_{m_{1}^{i},M_{1}^{i}}(t,r) \frac{\partial^{i}}{\partial r^{i}} H_{m_{0}^{i},M_{0}^{i}}(-r,s) \, \mathrm{d}r \right) \sigma(s) \, \mathrm{d}s \\
&\quad + \sum_{i=0}^{n-1} (M_{1}^{i} - M_{0}^{i}) \int_{-T}^{T} \left( \int_{-T}^{T} H_{m_{1}^{i},M_{1}^{i}}(t,r) \frac{\partial^{i}}{\partial r^{i}} H_{m_{0}^{i},M_{0}^{i}}([r],s) \, \mathrm{d}r \right) \sigma(s) \, \mathrm{d}s \\
&\quad + \int_{-T}^{T} H_{m_{1}^{i},M_{1}^{i}}(t,s) \sigma(s) \, \mathrm{d}s.
\end{align*}

Thus, since previous equalities hold for any $\sigma \in \mathcal{L}^{1}(\hat{J})$, we finally arrive at
\begin{equation}
\begin{split}
H_{m_{0}^{i},M_{0}^{i}}(t,s)&=\sum_{i=0}^{n-1}(m_{1}^{i}-m_{0}^{i})\int_{-T}^{T}{H_{m_{1}^{i},M_{1}^{i}}(t,r)\frac{\partial^{i}}{\partial{r}^{i}}H_{m_{0}^{i},M_{0}^{i}}(-r,s) \mathrm{d}r} \\
& \quad +\sum_{i=0}^{n-1}(M_{1}^{i}-M_{0}^{i})\int_{-T}^{T}{H_{m_{1}^{i},M_{1}^{i}}(t,r)\frac{\partial^{i}}{\partial{r}^{i}}H_{m_{0}^{i},M_{0}^{i}}([r],s) \mathrm{d}r}+H_{m_{1}^{i},M_{1}^{i}}(t,s).
\end{split}
\label{llorocom}
\end{equation}

In the particular case when $m_{0}^{i} = m_{1}^{i}$ and $M_{0}^{i} = M_{1}^{i}$ for $i = 1, \ldots, n-1$, we will denote $m_{0}^{0} = m_{0}$, $m_{1}^{0} = m_{1}$, $M_{0}^{0} = M_{0}$, and $M_{1}^{0} = M_{1}$. Thus, we arrive at the following expression:

\begin{equation}
\begin{split}
H_{m_{0},M_{0}}(t,s)&=(m_{1}-m_{0})\int_{-T}^{T}{H_{m_{1},M_{1}}(t,r)H_{m_{0},M_{0}}(-r,s) \mathrm{d}r} \\
& \quad +(M_{1}-M_{0})\int_{-T}^{T}{H_{m_{1},M_{1}}(t,r)H_{m_{0},M_{0}}([r],s) \mathrm{d}r}+H_{m_{1},M_{1}}(t,s).
\end{split}
\label{lloro}
\end{equation}

From expression \eqref{lloro}, we can obtain a series of properties for these Green's functions.

\begin{proposition}
Let $H_{m,M}$ be the Green's function related to Problem \eqref{b1}, and consider different problems of the form \eqref{b1} by varying the value of the parameter $M \in \mathbb{R}$, with $m \in \mathbb{R}$ fixed that are uniquely solvable. Then
\begin{enumerate}
\item For all $M \in \mathbb{R}$ for which the Green's function $H_{m,M}$ is positive on $\mathring{\hat{J} } \times \mathring{\hat{J}}$, we have that $H_{m,M}$ decreases with $M$. That is, if we consider two values $M_{0}$ and $M_{1}$ such that $M_{1}>M_{0}$, $H_{m,M_{0}}>0$ and $H_{m,M_{1}}>0$ on $ \mathring{\hat{J}} \times \mathring{\hat{J}}$, then $H_{m,M_{0}}>H_{m,M_{1}}$ on $\mathring{\hat{J}} \times \mathring{\hat{J}}$.

\item For all $M \in \mathbb{R}$ for which the Green's function $H_{m,M}$ is negative on $\mathring{\hat{J}} \times \mathring{\hat{J}}$, it is fulfilled that $H_{m,M}$ decreases with $M$. That is, if we consider two values $M_{0}$ and $M_{1}$ such that $M_{1}>M_{0}$, $H_{m,M_{0}}<0$ and $H_{m,M_{1}}<0$ on $\mathring{\hat{J}} \times \mathring{\hat{J}}$, then $H_{m,M_{0}}>H_{m,M_{1}}$ on $\mathring{\hat{J}} \times \mathring{\hat{J}}$.

\item If we have two parameter values $M_{0}$ and $M_{1}$ such that $H_{m,M_{0}}<0$  and $H_{m,M_{1}}>0$ on $\mathring{\hat{J}} \times \mathring{\hat{J}}$, then necessarily $M_{1}>M_{0}$.
\end{enumerate}
\label{decrece1}
\end{proposition}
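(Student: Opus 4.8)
The plan is to derive all three statements from the single relationship \eqref{lloro}, specialized to the situation in which only the parameter $M$ varies. Setting $m_{0}=m_{1}=m$ in \eqref{lloro} annihilates the first integral, whose coefficient is $m_{1}-m_{0}=0$, and leaves the identity
\begin{equation*}
H_{m,M_{0}}(t,s)-H_{m,M_{1}}(t,s)=(M_{1}-M_{0})\int_{-T}^{T}H_{m,M_{1}}(t,r)\,H_{m,M_{0}}([r],s)\,\mathrm{d}r,
\end{equation*}
valid for all $(t,s)\in\mathring{\hat{J}}\times\mathring{\hat{J}}$. Each claim then reduces to reading off the sign of the right-hand side, which factors as the scalar $M_{1}-M_{0}$ times an integral whose sign is controlled by the hypotheses on $H_{m,M_{0}}$ and $H_{m,M_{1}}$.

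The key preliminary observation I would record is that, for every $r\in(-T,T)$, the integer $[r]$ again lies in $\mathring{\hat{J}}$. Indeed, if $r\ge 0$ then $0\le[r]\le r<T$, while if $r<0$ then $-T<r\le[r]\le 0$; in particular the boundary values $\pm T$ are never attained, as they would require $r\ge T$ or $r\le -T$. Consequently, for fixed $t,s\in\mathring{\hat{J}}$ and for almost every $r$, both factors of the integrand are evaluated at points of $\mathring{\hat{J}}\times\mathring{\hat{J}}$: the factor $H_{m,M_{1}}(t,r)$ carries the sign assumed for $H_{m,M_{1}}$, and the factor $H_{m,M_{0}}([r],s)$ carries the sign assumed for $H_{m,M_{0}}$. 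Since each factor is continuous off the diagonal and of strict constant sign, their product is of strict constant sign for almost every $r$, so the integral is strictly positive when the two factors share their sign and strictly negative when they oppose it.

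With this in hand the three parts are immediate. For the first, $M_{1}>M_{0}$ gives $M_{1}-M_{0}>0$, while $H_{m,M_{1}}>0$ and $H_{m,M_{0}}>0$ make the integrand, hence the integral, strictly positive; the right-hand side is therefore positive and $H_{m,M_{0}}>H_{m,M_{1}}$ on $\mathring{\hat{J}}\times\mathring{\hat{J}}$. The second part is identical: $H_{m,M_{1}}<0$ and $H_{m,M_{0}}<0$ again yield a positive integrand, and $M_{1}-M_{0}>0$ preserves the same conclusion $H_{m,M_{0}}>H_{m,M_{1}}$. For the third part, the hypotheses $H_{m,M_{0}}<0$ and $H_{m,M_{1}}>0$ force the left-hand side $H_{m,M_{0}}-H_{m,M_{1}}$ to be negative and the integral to be negative; the identity then reads $(\text{negative})=(M_{1}-M_{0})\cdot(\text{negative})$, which can hold only if $M_{1}-M_{0}>0$, that is $M_{1}>M_{0}$.

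The step I expect to demand the most care is the combination of two points: first, that $[r]$ never reaches the boundary of $\hat{J}$, which is exactly what allows me to apply the \emph{interior} sign hypotheses to the factor $H_{m,M_{0}}([r],s)$ rather than a weaker boundary version; and second, the strictness of the integral inequalities. For the latter one must verify that the diagonal singularities of the Green's functions, occurring where $t=r$ or where $s=[r]$ is an integer, are confined to an $r$-set of measure zero, so that they affect neither the definedness nor the strict sign of the integral; this is where the regularity encoded in Definition \ref{d1} enters.
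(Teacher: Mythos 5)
Your proposal is correct and follows essentially the same route as the paper: both rest on the identity \eqref{emfixed} obtained from \eqref{lloro} with $m_{0}=m_{1}=m$, and parts 1 and 2 are read off from the sign of $(M_{1}-M_{0})\int_{-T}^{T}H_{m,M_{1}}(t,r)H_{m,M_{0}}([r],s)\,\mathrm{d}r$ exactly as in the paper. The only differences are cosmetic: you prove part 3 directly by sign-dividing the identity where the paper argues by contradiction, and you supply the (welcome but minor) justifications that $[r]$ stays in $\mathring{\hat{J}}$ and that the diagonal singularities occupy a null set.
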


\begin{proof}
The proof of the previous proposition is immediate by observing expression \eqref{lloro}, with $m_{0}=m_{1}=m$.

For parts $1$ and $2$, we have that
\begin{equation}
H_{m,M_{0}}(t,s)=H_{m,M_{1}}(t,s)+(M_{1}-M_{0})\int_{-T}^{T}{H_{m,M_{1}}(t,r) H_{m,M_{0}}([r],s) \mathrm{d}r},
\label{emfixed}
\end{equation}
where $(M_{1}-M_{0})\int_{-T}^{T}{H_{m,M_{1}}(t,r) H_{m,M_{0}}([r],s) \mathrm{d}r}>0$ when $M_{1}>M_{0}$. Therefore, $H_{m,M_{0}}>H_{m,M_{1}}$ on $\hat{J}  \times \hat{J}$.

Part $3$ can be proved by contradiction. Suppose that $H_{m,M_{1}}<0$ for all $(t,s) \in \mathring{\hat{J} } \times \mathring{\hat{J}}$ and $H_{m,M_{0}}>0$ for all $(t,s) \in \mathring{\hat{J}} \times \mathring{\hat{J}}$ with $M_{1}>M_{0}$. Then, from \eqref{emfixed}, we have that $H_{m,M_{1}}(t,s)<0$ and $(M_{1}-M_{0})\int_{-T}^{T}{H_{m,M_{1}}(t,r) H_{m,M_{0}}([r],s) \mathrm{d}r}<0$, which implies $H_{m,M_{0}}<H_{m,M_{1}}<0$ on $\mathring{\hat{J}} \times \mathring{\hat{J}}$. This leads to a contradiction, and the proof is concluded.
\end{proof}

Analogously to previous result, we arrive at the following one:

\begin{proposition}
Let $H_{m,M}$ be the Green's function associated with Problem \eqref{b1}, and consider different problems of the form \eqref{b1} by varying the value of the parameter $m \in \mathbb{R}$, for any $M \in \mathbb{R}$ fixed that are uniquely solvable. Then:
\begin{enumerate}
\item For all $m \in \mathbb{R}$ for which the Green's function $H_{m,M}$ is positive on $\mathring{\hat{J}} \times \mathring{\hat{J}}$, we have that $H_{m,M}$ decreases with $m$. That is, if we consider two values $m_{0}$ and $m_{1}$ such that $m_{1}>m_{0}$, $H_{m_{0},M}>0$ and $H_{m_{1},M}>0$ on $\mathring{\hat{J}} \times \mathring{\hat{J}}$, then $H_{m_{0},M}>H_{m_{1},M}$ on $\mathring{\hat{J}} \times \mathring{\hat{J}}$.

\item For all $m \in \mathbb{R}$ for which the Green's function $H_{m,M}$ is negative on $\mathring{\hat{J}} \times \mathring{\hat{J}}$, it is fulfilled that $H_{m,M}$ decreases with $m$. That is, if we consider two values $m_{0}$ and $m_{1}$ such that $m_{1}>m_{0}$, $H_{m_{0},M}<0$ and $H_{m_{1},M}<0$ on $\mathring{\hat{J}} \times \mathring{\hat{J}}$, then $H_{m_{0},M}>H_{m_{1},M}$ on $\mathring{\hat{J}} \times \mathring{\hat{J}}$.

\item If we have two parameter values $m_{0}$ and $m_{1}$ such that $H_{m_{0},M}<0$ on $\mathring{\hat{J}} \times \mathring{\hat{J}}$ and $H_{m_{1},M}>0$, then necessarily $m_{1}>m_{0}$.
\end{enumerate}
\end{proposition}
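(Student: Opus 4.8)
The plan is to mirror exactly the argument used for Proposition \ref{decrece1}, but to specialize the master identity \eqref{lloro} in the complementary way. Instead of setting $m_0 = m_1$ to annihilate the first integral, I would set $M_0 = M_1 = M$, which kills the term carrying the factor $(M_1 - M_0)$ and leaves the single relation
\[
H_{m_0,M}(t,s) = H_{m_1,M}(t,s) + (m_1 - m_0)\int_{-T}^{T} H_{m_1,M}(t,r)\, H_{m_0,M}(-r,s)\,\mathrm{d}r,
\]
valid on $\mathring{\hat{J}} \times \mathring{\hat{J}}$. This is the precise analogue of \eqref{emfixed}, with the piecewise-constant evaluation $H_{m_0,M}([r],s)$ replaced by the reflected evaluation $H_{m_0,M}(-r,s)$, so the entire argument reduces to controlling the sign of the integral correction term.

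For parts 1 and 2, I would fix $m_1 > m_0$ with both Green's functions of the same constant sign on the interior. Then $m_1 - m_0 > 0$, and the integrand $H_{m_1,M}(t,r)\, H_{m_0,M}(-r,s)$ is a product of two factors of that same sign, hence positive for almost every $r \in (-T,T)$. The key observation here is that the reflection $r \mapsto -r$ maps $(-T,T)$ bijectively onto itself, so $-r$ lies in the interior whenever $r$ does, and the assumed sign of $H_{m_0,M}$ transfers directly to $H_{m_0,M}(-r,s)$. Consequently the correction term is strictly positive, giving $H_{m_0,M} > H_{m_1,M}$ on $\mathring{\hat{J}} \times \mathring{\hat{J}}$; the same computation covers both the positive and the negative regimes, since in each case the two factors share a sign and their product is positive.

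For part 3, I would reason by contradiction exactly as in Proposition \ref{decrece1}. Assuming the conclusion fails, one is led (after relabeling the indices so that the larger index carries the larger parameter) to a configuration with $m_1 > m_0$ while $H_{m_1,M} < 0$ and $H_{m_0,M} > 0$ on the interior. Feeding this into the displayed identity, $H_{m_1,M}(t,s)$ is negative, and the term $(m_1 - m_0)\int_{-T}^{T} H_{m_1,M}(t,r)\,H_{m_0,M}(-r,s)\,\mathrm{d}r$ is a positive multiple of an integrand that is negative (a negative factor times a positive one), hence negative. Thus the right-hand side is negative, forcing $H_{m_0,M} < 0$ and contradicting its assumed positivity, which rules out the offending ordering and yields $m_1 > m_0$.

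I expect the only genuine subtlety — the ``hard part'' — to be the sign bookkeeping for the reflected argument: one must ensure that evaluating $H_{m_0,M}$ at $(-r,s)$ never escapes the interior where the sign hypothesis is available. This is in fact where the reflection case is cleaner than the piecewise-constant case of Proposition \ref{decrece1}: since $r \mapsto -r$ is an involution of $(-T,T)$ and the endpoints $r = \pm T$ form a null set, the integrand inherits the constant sign almost everywhere, and the \emph{strict} inequalities follow from the continuity of $H_{m,M}$ on the interior together with the positive measure of the domain of integration.
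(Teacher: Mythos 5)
Your proposal is correct and follows exactly the route the paper intends: the paper omits the proof, stating only that it is ``analogous'' to Proposition \ref{decrece1}, and the specialization $M_0=M_1=M$ of \eqref{lloro} that you use is precisely the identity the authors themselves display at the end of Section \ref{metodofixo}. Your added remark that $r\mapsto -r$ keeps the evaluation point inside $\mathring{\hat{J}}$ is the right (and only) point where the reflected case differs from the piecewise-constant one.
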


\subsection{Constant sign region of $H_{m,M}$}
\label{metodofixo}
The following section will focus on an analysis of solutions with constant signs. Our objective is to delineate, as a function of the parameters $m$ and $M$, the region where the Green's function $H_{m,M}(t,s)$ assumes a positive or negative value for all $(t,s) \in \mathring{\hat{J}}  \times \mathring{\hat{J}}$. The following section outlines the steps to determine when the function is positive. The procedure is similar in the case where the objective is to ensure that the function is negative.

This procedure is valid provided that the parameters $(m, M)$ are not eigenvalues of the problem under consideration. It is therefore necessary to calculate these values in advance and, based on them, to study the values of such parameters where the Green's function is positive on $\hat{J} \times \hat{J}$.

Our main objective will be to find for each fixed $m \in \mathbb{R}$, if it exists, the biggest $M_{0}(m) \in \mathbb{R}$ such that
 $$\min_{(t,s) \in \hat{J} \times \hat{J}} H_{m,M_{0}}(t,s) = 0.$$
If for some $m \in \mathbb{R}$, the region where the Green's function is positive is non-empty, and if $\overline{M}$ satisfies $H_{m,\overline{M}} > 0$ on $\mathring{\hat{J}} \times \mathring{\hat{J}}$, then, by the decreasing property of $H_{m,M}$ with respect to $M$, as stated in Proposition \ref{decrece1}, $H_{m,M} \geq 0$ on $\mathring{\hat{J}} \times \mathring{\hat{J}}$ if $M$ is greater than the largest eigenvalue (if it exists), less than $\overline{M}$ and $M \leq M_{0}$ for some $M_{0} > \overline{M}$ (or $M$ unbounded).

From equation \eqref{emfixed}, it can be verified that the following equality holds (provided the integral is non-zero):
\begin{equation}
M_{0}=\frac{-H_{m,M_{0}}(t,s)+H_{m,M_{1}}(t,s)}{\int_{-T}^{T}{H_{m,M_{1}}(t,r)H_{m,M_{0}}([r],s)}\mathrm{d}r}+M_{1}, \quad \forall (t,s) \in \hat{J} \times \hat{J}.
\label{mcerta}
\end{equation}

Since we are looking for \( M_{0} \) such that \(\min_{(t,s) \in \hat{J} \times \hat{J}} H_{m,M_{0}}(t,s) = H_{m,M_{0}}(\hat{t},\hat{s}) = 0\) for some \((\hat{t}, \hat{s}) \in \hat{J} \times \hat{J}\), then \( M_{0} \) must satisfy:
\begin{equation}
M_{0}=\frac{H_{m,M_{1}}(\hat{t},\hat{s})}{\int_{-T}^{T}{H_{m,M_{1}}(\hat{t},r)H_{m,M_{0}}([r],\hat{s})} \mathrm{d}r}+M_{1},
\label{exprem}
\end{equation}
for all $M_{1} \in \mathbb{R}$ which is not an eigenvalue of the considered problem. In particular, we deduce that the right side of equation \eqref{exprem} is independent of $M_{1}$.
Consequently, for each fixed \( m \) and \( M_{1} \), we define the operator \( \overline{T}_{m}: \mathbb{R} \times \hat{J}  \times \hat{J} \rightarrow \mathbb{R} \) as follows:
\begin{equation}
\overline{T}_{m}(M,t,s)=\frac{H_{m,M_{1}}(t,s)}{\int_{-T}^{T}{H_{m,M_{1}}(t,r)H_{m,M}([r],s)} \mathrm{d}r}+M_{1}.
\label{opfixo}
\end{equation}
The objective would be to find the biggest \( M_{0} \) such that
\begin{equation*}
M_{0}=\overline{T}_{m}(M_{0},\hat{t},\hat{s})
\end{equation*}
where the point $(\hat{t},\hat{s}) \in \hat{J} \times \hat{J}$ satisfies that \(\min_{(t,s) \in \hat{J} \times \hat{J}} H_{m,M_{0}}(t,s) = H_{m,M_{0}}(\hat{t},\hat{s}) = 0\).

Note that the point \((\hat{t}, \hat{s})\) depends on \(M_{0}\) and \(m\), but not on \(M_{1}\).

Since the expression \eqref{exprem} is independent of the parameter \( M_{1} \), if \( M_{1} = 0 \) is not an eigenvalue of the problem under consideration, we can set \( M_{1} = 0 \) to simplify the calculations. Then, we have to look for the fixed points with respect to $M$ of operator:
\begin{equation}
\overline{T}_m^{0}(M,t,s)=\frac{G_{m}(\hat{t},\hat{s})}{\int_{-T}^{T}{G_{m}(\hat{t},r)H_{m,M}([r],\hat{s})} \mathrm{d}r}.
\label{exprem22}
\end{equation}

In most cases, it will not be easy to find the point \((\hat{t}, \hat{s}) \in \hat{J} \times \hat{J}\) where the function \(H_{m,M_{0}}\) attains its minimum on \(\hat{J} \times \hat{J}\) and such minimum is equals to \(0\). In such cases, it will be necessary to approach the problem differently.

If, for some \(m \in \mathbb{R}\), there exists a \(\overline{M} \in \mathbb{R}\) such that the Green's function \(H_{m, \overline{M}}\) is well-defined and positive on \(\hat{J} \times \hat{J}\), then \(M_{0}\) will be the value that satisfies the following equality:

\begin{equation}
	M_0 = \inf \left\{ M \;\Bigg|\; 
	M = \overline{T}_{m}^{0}(M,t,s) \text{ for some } (t,s) \in \operatorname{int}(\hat{J} \times \hat{J}) 
	\right\}.
	\label{igualdadM}
\end{equation}

It is easy to verify that, if $G_{m} \neq 0$ and $M_1=0$, expression \eqref{mcerta} can be rewritten as
\begin{equation}
M_{0}=\overline{T}_{m}^{0}(M_{0},t,s)-\frac{H_{m,M_{0}}(t,s)}{G_{m}(t,s)}\overline{T}_{m}^{0}(M_{0},t,s), \quad \forall (t,s) \in \hat{J} \times \hat{J}.
\label{maxmin}
\end{equation}
Furthermore, if the points $(t,s) \in \hat{J} \times \hat{J}$ where $\frac{\partial} {\partial{t}}H_{m,M}$ and $\frac{\partial} {\partial{s}}H_{m,M}$ with $M=M_{0}$ and $M=0$ are well defined, we can differentiate the previous expression and obtain the following equalities.

\begin{equation*}
\begin{aligned}
\frac{\partial}{\partial t} \overline{T}_{m}^{0}(M_{0}, t, s) &= \frac{\frac{\partial}{\partial t} H_{m,M_{0}}(t, s) G_{m}(t, s) - H_{m,M_{0}}(t, s) \frac{\partial}{\partial t} G_{m}(t, s)}{G_{m}(t, s)^2} \overline{T}_{m}^{0}(M_{0}, t, s) \\
& \qquad + \frac{H_{m,M_{0}}(t, s)}{G_{m}(t, s)} \frac{\partial} {\partial t}\overline{T}_{m}^{0}(M_{0}, t, s)
\end{aligned}
\end{equation*}
and
\begin{equation*}
\begin{aligned}
\frac{\partial}{\partial s} \overline{T}_{m}^{0}(M_{0}, t, s) &= \frac{\frac{\partial}{\partial s} H_{m,M_{0}}(t, s) G_{m}(t, s) - H_{m,M_{0}}(t, s) \frac{\partial}{\partial s} G_{m}(t, s)}{G_{m}(t, s)^2} \overline{T}_{m}^{0}(M_{0}, t, s) \\
& \qquad + \frac{H_{m,M_{0}}(t, s)}{G_{m}(t, s)} \frac{\partial}{\partial s} \overline{T}_{m}^{0}(M_{0}, t, s)
\end{aligned}
\end{equation*}

Then, if $(\hat{t},\hat{s}) \in \hat{J} \times \hat{J}$ satisfies $H_{m,M_{0}}(\hat{t}, \hat{s})=0$, $\frac{\partial} {\partial{t}}H_{m,M_{0}}(\hat{t},\hat{s})=0$ and $\frac{\partial}{\partial{s}}H_{m,M_{0}}(\hat{t},\hat{s})=0$, it is a critical point of $\overline{T}_{m}^{0}(M_{0},t,s)$ with respect to the variables $(t,s)$.
We can also reason in another way. As long as the denominator does not vanish and $M_0$ is not an eigenvalue, the operator $\overline{T}_m^{0}(M_0,t,s)$ is continuous with respect to $M_0$. Therefore, it follows that the first time $M_0=\overline{T}_{m}^{0}(M_0,t,s)$ occurs, it must happen at a local minimum or maximum of $\overline{T}_{m}^{0}(M_0,t,s)$ with respect to $(t,s) \in \hat{J} \times \hat{J}$.

Moreover, provided that the denominator does not vanish, and either the operators $\overline{T}_{m}^{0}(M_0,\cdot,s): \hat{J} \rightarrow \mathbb{R}$ and  $\overline{T}_{m}^{0}(M_0,t,\cdot): \hat{J} \rightarrow \mathbb{R}$ are continuous, or we are working with periodic boundary conditions (where we can ensure that the operator attains all intermediate values between the minimum and the maximum), it is easy to see that we must look for a global maximum or minimum of $\overline{T}_{m}^{0}(M_0,t,s)$ with respect to $(t,s) \in \hat{J} \times \hat{J}$. For example, in this case, if we identify a point $(t_{m}, s_{m})$ such that $M > \overline{T}_{m}^{0}(M,t_{m},s_{m})$ or, equivalently, considering equation \eqref{mcerta} with $M_1=0$,
\begin{equation*}
	\frac{H_{m,M}(t_m,s_m)}{\int_{-T}^{T}{G_{m}(t_m,r)H_{m,M}([r],s_m)} \mathrm{dr}}<0,
\end{equation*}
 $(\hat{t},\hat{s}) \in \hat{J} \times \hat{J}$ will be precisely the point where the operator $\overline{T}_{m}^{0}(M_0,t,s)$ attains its global maximum (see Figure \ref{globalmax} (a)) with respect to $(t,s) \in \hat{J} \times \hat{J}$.
 
In the case where the denominator vanishes at some point (see Figure \ref{globalmax} (b)), the first value for which $M_0=\overline{T}_m^0(M_0,t,s)$ may occur either when the denominator vanishes for some $(t,s) \in \hat{J} \times \hat{J}$, or at a point $(\hat{t},\hat{s}) \in \hat{J} \times \hat{J}$ where $\overline{T}_m^0(M_0,t,s)$ attains a local minimum, local maximum, or is non regular.

\begin{figure}[H] 
    \centering
    \subfigure[Justification for seeking the global maximum of $\overline{T}_{m}^{0}(M_{0},t,s)$, with $(t,s) \in \hat{J} \times \hat{J}$, when it is attained.]{
        \includegraphics[width=0.45\textwidth]{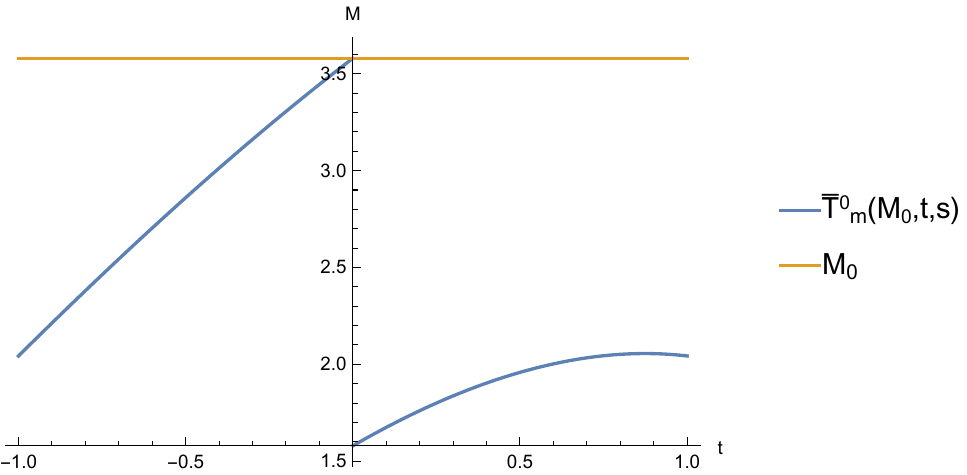}
    }
    \subfigure[Global maximum or minimum of $\overline{T}_{m}^{0}(M_{0},t,s)$, with $(t,s) \in \hat{J} \times \hat{J}$ when it is not attained.]{
        \includegraphics[width=0.45\textwidth]{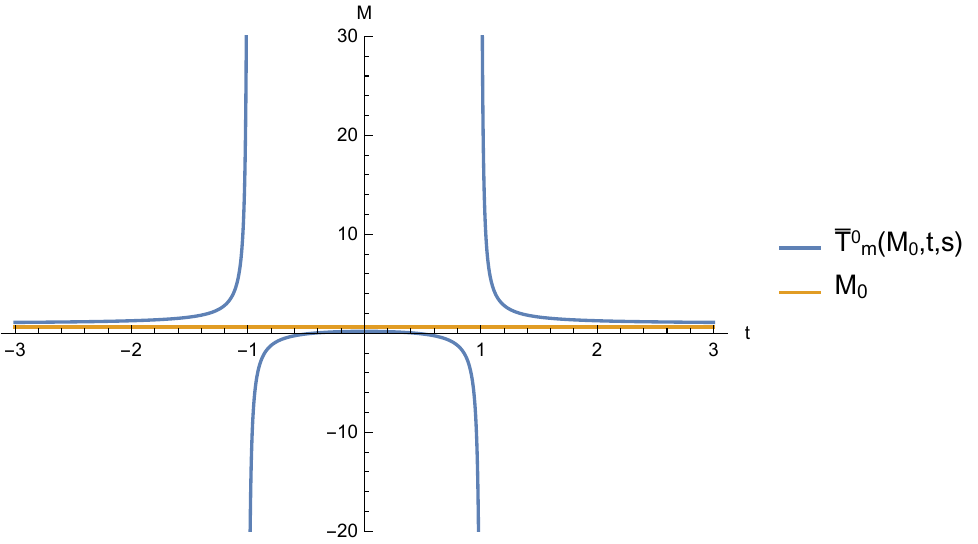}
    }
\caption{Relative maximum or minimum of $\overline{T}_{m}^{0}(M_{0},t,s)$, with $(t,s) \in \hat{J} \times \hat{J}$}
\label{globalmax}
\end{figure}

Looking for a minimum or a maximum of $\overline{T}_{m}^{0}(M_{0},t,s)$ with $(t,s) \in \hat{J} \times \hat{J}$ can be computationally much more efficient than searching for a minimum of $H_{m,M_{0}}$. This is because the integral can be divided into different intervals, requiring the evaluation of $H_{m,M_{0}}$ only at integer values of $t \in \hat{J}$, rather than for all $t \in \hat{J}$. Moreover, evaluating $H_{m,M_{0}}$ for large values of $T$ can be very costly.

Another way to solve the problem is to find $M_{0}$ that satisfies equality \eqref{igualdadM}, restricting the search to points $(t,s) \in \hat{J} \times \hat{J}$ that are critical, non-regular, or on the boundary.

Following an analogous reasoning, but starting now from the particular case of expression \eqref{lloro}:
\begin{equation*}
H_{m_{0},M}(t,s)=H_{m_{1},M}(t,s)+(m_{1}-m_{0})\int_{-T}^{T}{H_{m_{1},M}(t,r)H_{m_{0},M}(-r,s)} \mathrm{d}r,
\end{equation*}
we could find for each fixed $M \in \mathbb{R}$, if it exists, the biggest $m_{0}(M) \in \mathbb{R}$ such that
\begin{equation}
	\min_{(t,s) \in \hat{J} \times \hat{J}}{H_{m_{0},M}}(t,s)=0.
\end{equation}

%

\subsection{Application of the method to a known problem}
	
In order to verify the theoretical procedure discussed in the previous subsection, we will use the stated results to study the following problem already analyzed in the literature \cite{cabada2011first,cabada2004green}.
\begin{equation}
v'(t)+m\,v(t)+M\,v([t])=\sigma(t), \textup{ a.e. } t \in I:=[0,T], \quad v(0)=v(T),
\label{comprobar}
\end{equation}
with $m$, $M \in \mathbb{R}$, $T \in (0,1]$ and $\sigma \in \mathcal{L}^{1}(I)$.

We will say that a function $v: I \rightarrow \mathbb{R}$ is a solution of Problem \eqref{comprobar} if $v \in \Omega^1$ and satisfies equation \eqref{comprobar}.

In \cite{cabada2011first}, this problem is studied, and the following lemma is proved.
\begin{lemma}
\cite[Theorem 4.1]{cabada2011first}
Assume that \( m + M \neq 0 \). Let \(\sigma \in \mathcal{L}^{1}(I)\) be a function that is non negative on \(I\). Then, the unique solution of Problem \eqref{comprobar} with \(T = 1\) is non negative on \(I\) if and only if one of the two following conditions is satisfied:
\begin{enumerate}
\item $0 \neq -m <M \leq \frac{m}{e^{m}-1}$.
\item $0=-m<M \leq 1$.
\end{enumerate}
\label{lemcomproba}
\end{lemma}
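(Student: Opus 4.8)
The plan is to reduce the nonnegativity of the solution to the nonnegativity of the Green's function of Problem \eqref{comprobar}, and then to compute that Green's function explicitly and analyze its sign. First I would record the comparison principle underlying the whole argument: since the unique solution of \eqref{comprobar} is $v(t)=\int_0^1 H_{m,M}(t,s)\,\sigma(s)\,\mathrm{d}s$, if $H_{m,M}\ge 0$ on $I\times I$ then $v\ge 0$ for every $\sigma\ge 0$; conversely, if $s\mapsto H_{m,M}(t_0,s)$ were negative on a set of positive measure for some $t_0$, then choosing $\sigma$ to be a nonnegative function supported there would force $v(t_0)<0$. Hence the statement is equivalent to characterizing the pairs $(m,M)$ for which $H_{m,M}\ge 0$ on $I\times I$.

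Next I would produce the explicit kernel. For $m\neq 0$ the Green's function $G_m$ of the periodic problem $v'+mv=\sigma$, $v(0)=v(1)$, is obtained by variation of parameters together with the periodic condition, giving
\[
G_m(t,s)=\frac{1}{e^{m}-1}\begin{cases} e^{m(s-t+1)}, & s\le t,\\ e^{m(s-t)}, & s> t.\end{cases}
\]
Because $T=1$ forces $[t]=0$ on $[0,1)$ (and $v(1)=v(0)$ closes the endpoint), the term $M\,v([t])$ collapses to $M\,v(0)$, and the reduction carried out in Section \ref{sec3} applies verbatim: using that $\int_0^1 G_m(t,r)\,\mathrm{d}r=1/m$ for every $t$ (the constant $1/m$ solves $v'+mv=1$ with periodic data), formula \eqref{final2} yields, under the hypothesis $m+M\neq 0$,
\[
H_{m,M}(t,s)=G_m(t,s)-\frac{M}{m+M}\,G_m(0,s)=\frac{e^{ms}}{e^{m}-1}\left[\,\rho_m(t,s)-\frac{M}{m+M}\,\right],
\]
where $\rho_m(t,s)=e^{m(1-t)}$ on $\{s\le t\}$ and $\rho_m(t,s)=e^{-mt}$ on $\{s>t\}$.

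The core of the argument is then a sign analysis of the bracket. Since $e^{ms}>0$ and $\operatorname{sign}(e^m-1)=\operatorname{sign}(m)$, nonnegativity of $H_{m,M}$ is equivalent to the bracket having the sign of $m$ on the whole square. The relevant extremum of $\rho_m$ is governed by the limit $t\to 1$ on the region $s>t$, where $\rho_m\to e^{-m}$, which I expect to be the binding value; the condition thus reduces to $\tfrac{M}{m+M}\le e^{-m}$ when $m>0$ and $\tfrac{M}{m+M}\ge e^{-m}$ when $m<0$. Solving these while carefully tracking the sign of $m+M$ and of $1-e^{-m}$ (which reverses the inequality) collapses in both cases to $-m<M\le \frac{m}{e^{m}-1}$; the branch $m+M<0$ has to be ruled out as inconsistent, and one checks $\frac{m}{e^m-1}>-m$ so the interval is nonempty. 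This is condition 1, and it matches the viewpoint of Section \ref{metodofixo}: $M=-m$ is exactly the value making the $1\times 1$ matrix $A$ singular (the excluded eigenvalue), while $M_0(m)=\frac{m}{e^m-1}$ is the fixed point at which $\min H_{m,M_0}=0$, since there $\frac{M_0}{m+M_0}=e^{-m}$.

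Finally I would treat $m=0$ separately, since $G_0$ does not exist (the pure periodic problem $v'=\sigma$ is not solvable for all $\sigma$). Here $m+M\neq 0$ means $M\neq 0$, and solving $v'=\sigma-M v(0)$, $v(0)=v(1)$ directly gives $v(0)=\frac{1}{M}\int_0^1\sigma$ and hence the kernel $H_{0,M}(t,s)=\frac{1}{M}+\chi_{[0,t]}(s)-t$; requiring this to be nonnegative on the square (worst case again at $t\to 1$, $s>t$, giving $\frac1M\ge 1$) yields $0<M\le 1$, which is condition 2. The main obstacle throughout is the case analysis in solving the rational inequality $\frac{M}{m+M}\lessgtr e^{-m}$, since the sign of $m+M$ dictates whether the inequality reverses, together with confirming that the extremum of $\rho_m$ is genuinely attained in the claimed limiting regime and not elsewhere on the boundary of the square.
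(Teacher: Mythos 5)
Your proposal is correct, and it reaches the paper's conclusion by a genuinely different final step. The setup coincides with Section 4.3: reduction to the sign of the Green's function, the explicit kernel \eqref{defg}, the identity \eqref{now}, the resulting formula $H_{m,M}(t,s)=G_m(t,s)-\frac{M}{m+M}G_m(0,s)$ of \eqref{ehm}, and the separate direct computation \eqref{eqM0} for $m=0$. Where you diverge is the sign analysis. The paper proves $\frac{\partial}{\partial t}H_{m,M}(t,s)=-m\,G_m(t,s)<0$, uses the jump and periodicity to locate the minimum at $t=s^-$, studies $q(s)=H_{m,M}(s^-,s)$ through $q'(s)=-\frac{mM}{m+M}G_m(0,s)$ to push the minimum to $s\to 0^+$ (if $M<0$) or $s\to T^-$ (if $M>0$), and then closes with the fixed-point operator $\overline{T}^0_m$ of Section \ref{metodofixo}, obtaining $M_0=\frac{m}{e^{mT}-1}$. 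You instead factor $H_{m,M}(t,s)=\frac{e^{ms}}{e^m-1}\bigl[\rho_m(t,s)-\frac{M}{m+M}\bigr]$ and reduce everything to the scalar inequality $\frac{M}{m+M}\le e^{-m}$ (resp. $\ge$) against the extremum of $\rho_m$. Your route is shorter and entirely elementary, but it exploits a factorization specific to this constant-coefficient kernel; the paper's longer route is the one that transfers to the reflection problem of Section \ref{sec5}, where $G_m(t,s)$ does not split as a function of $s$ times a function of $t$, which is precisely why the authors rehearse their general method on this known example.

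Two steps you leave as assertions should be made explicit. First, rather than declaring the branch $m+M<0$ "inconsistent", use the argument of Lemma \ref{necesaria}: for $\sigma\equiv 1$ the unique solution is the constant $\frac{1}{m+M}$, so $m+M>0$ is necessary for nonnegativity (and $m+M=0$ is a line of eigenvalues, excluded by hypothesis). Second, the location of the extremum of $\rho_m$ must actually be verified, since the whole inequality hinges on it: for $m>0$ one has $e^{m(1-t)}\ge 1>e^{-m}$ on $\{s\le t\}$ while $e^{-mt}$ has infimum $e^{-m}$ on $\{t<s\le 1\}$ (approached as $t\to s^-$, $s\to 1^-$, not attained, which is why equality $M=\frac{m}{e^m-1}$ is still admissible); the case $m<0$ is symmetric with supremum $e^{-m}$. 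You flag this as the remaining obstacle; it is a short computation and it checks out, so the proof is complete once it is written down.
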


We will attempt to reach an analogous result considering the new method previously outlined.

\begin{remark}
When considering equations with reflection, it is necessary for the solution to be defined on the interval $\hat{J}=[-T,T]$. Therefore, in the results from the previous sections, we work on the interval $\hat{J}$. However, it is easy to see that if the equation does not involve reflection, the previously mentioned results still hold valid on any interval $J=[a,b]$, and, in particular, on $I=[0,T]$. It is also possible to work with reflection on a general interval, but in this case, we need to consider a function $f:[a,b] \rightarrow [a,b]$ defined by $f(t)=a + b - t$.
\end{remark}

To begin, we consider the Green's function for Problem \eqref{comprobar} with \(M = 0\), that is,
\begin{equation}
v'(t)+m\,v(t)=\sigma(t), \textup{ a.e. } t \in I, \quad v(0)=v(T),
\label{comprobar2}
\end{equation}
with $m \in \mathbb{R} \setminus \{0\}$ and $T \in (0,1]$.

\begin{remark}
In this section, we denote by $G_{m}$ the Green's function related to Problem \eqref{comprobar2}.
\end{remark}

As it is proved in \cite{cabada2011first}, we have that the Green's function related to \eqref{comprobar2} exists and is unique if and only if $m\neq0$. In such a case, it is given by the expression
\begin{equation}
G_{m}(t,s)=\frac{1}{e^{mT}-1}\left\{ \begin{array}{lll}
e^{m(s-t+T)}, & \mbox{\rm if}  & 0\leq s< t\leq T,\\ \\
e^{m(s-t)}, & \mbox{\rm if}  &  0\leq t< s \leq T.
\end{array}\right.
\label{defg}
\end{equation}

On the other hand, following equation \eqref{final2} and using that
\begin{equation}
\int_{0}^{T}{G_{m}(t,r) \mathrm{d}r}=\frac{1}{m} \, \textup{for all} \, t \in I,
\label{now}
\end{equation}
(see \cite{cabada2014green}), we obtain that the Green's function \(H_{m,M}\) related to Problem \eqref{comprobar} follows the expression:
\begin{equation}
H_{m,M}(t,s)=G_{m}(t,s)-\frac{M}{m+M}G_{m}(0,s), \quad (t,s) \in I \times I, t \neq s.
\label{ehm}
\end{equation}
 When $m=0$ we have that these is not $G_{m}$. Despite this, by direct integration, we obtain, for $M \neq 0$ that
\begin{equation}
H_{0,M}(t,s)=\frac{1}{M\,T}\left\{ \begin{array}{lll}
1-M\,t+M\,T, & \mbox{\rm if}  & 0\leq s< t\leq T,\\ \\
1-M\,t, & \mbox{\rm if}  &  0\leq t< s \leq T.
\end{array}\right.
\label{eqM0}
\end{equation}

Following Definition \ref{d1} of the Green's function, it is not difficult to verify that if $m+M \neq 0$ then there exists a unique Green's function $H_{m,M}$ and it is characterized by the following properties:

\begin{proposition}
The Green's function \(H_{m,M}\) related to Problem \eqref{comprobar} satisfies the following properties:
\begin{enumerate}

\item \(H_{m,M}\) is defined on the square \(J \times J\) (except at \(t = s\)).

\item $\displaystyle
H_{m,M}$ and  $\displaystyle
\frac{\partial}{\partial t}H_{m,M}$ exist and are continuous on the triangles \(0 \leq s < t \leq T\) and \(0 \leq t < s \leq T\).
 
\item For each \(s \in (0,T)\), the function \(t \mapsto H_{m,M}(t,s)\) is the solution of the following differential equation on \([0,s) \cup (s,T]\). That is,
\begin{equation*}
\frac{\partial }{\partial t} H_{m,M}(t,s) + m\, H_{m,M}(t,s) + M\, H_{m,M}([t],s) = 0, \textup{ for all }t \in I \setminus \{s\}.
\end{equation*}

\item For each \( t \in (0,T) \), the one-sided limits exist and are finite:
\begin{equation*}
H_{m,M}(t^-,t) = H_{m,M}(t,t^+)\quad \text{and}\quad H_{m,M}(t,t^-) = H_{m,M}(t^+,t),
\end{equation*}
and furthermore,
\begin{equation*}
H_{m,M}(t^+,t) - H_{m,M}(t^-,t) = H_{m,M}(t,t^-) - H_{m,M}(t,t^+) = 1.
\end{equation*}

\item For each \( s \in (0,T) \), the function \( t \rightarrow H_{m,M}(t,s) \) satisfies the boundary condition
\begin{equation*}
H_{m,M}(0,s) = H_{m,M}(T,s).
\end{equation*}

\end{enumerate}
\label{carasimple}
\end{proposition}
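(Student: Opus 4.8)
The plan is to verify all five properties by direct computation from the explicit expression \eqref{ehm} (and from \eqref{eqM0} when $m=0$), exploiting the crucial simplification afforded by the restriction $T \in (0,1]$. Indeed, by the definition of $[\,\cdot\,]$ we have $[t]=0$ for every $t \in [0,T)$, so that $v([t])=v(0)$ for a.e. $t \in I$ and, correspondingly, $H_{m,M}([t],s)=H_{m,M}(0,s)$ for a.e. $t \in I$. This reduces Problem \eqref{comprobar} to the ordinary linear equation $v'(t)+m\,v(t)+M\,v(0)=\sigma(t)$, and it is precisely this observation that makes the verification elementary.

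Properties $1$ and $2$ are immediate. On each of the two open triangles $G_{m}$ is, for $m\neq 0$, a product of exponentials in $t$ and $s$ by \eqref{defg}, hence $C^{\infty}$, and the correction term $-\tfrac{M}{m+M}G_{m}(0,s)$ in \eqref{ehm} is independent of $t$ and smooth in $s$ away from the diagonal; since $m+M\neq 0$, the whole expression is well defined on $(I\times I)\setminus\{t=s\}$, and $\partial_{t}H_{m,M}=\partial_{t}G_{m}$ shares these regularity properties. For $m=0$ the affine expression \eqref{eqM0} makes both claims obvious.

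For property $3$ I would first record, from \eqref{defg}, that $\partial_{t}G_{m}(t,s)=-m\,G_{m}(t,s)$ on each triangle, together with the algebraic identity $H_{m,M}(0,s)=\tfrac{m}{m+M}G_{m}(0,s)$ obtained by setting $t=0$ in \eqref{ehm}. Since $\partial_{t}H_{m,M}=\partial_{t}G_{m}$, substituting these into $\partial_{t}H_{m,M}+m\,H_{m,M}+M\,H_{m,M}(0,s)$ and collecting the terms proportional to $G_{m}(0,s)$ produces the coefficient $-\tfrac{mM}{m+M}+\tfrac{mM}{m+M}=0$, leaving exactly $\partial_{t}G_{m}+m\,G_{m}=0$. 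Property $5$ is equally short: evaluating \eqref{defg} at $t=0$ and $t=T$ gives $G_{m}(0,s)=G_{m}(T,s)=\tfrac{e^{ms}}{e^{mT}-1}$, which is the periodicity of $G_{m}$ built into \eqref{comprobar2}; since the correction term in \eqref{ehm} depends only on $s$, it follows at once that $H_{m,M}(0,s)=H_{m,M}(T,s)$, and the case $m=0$ follows identically from \eqref{eqM0}.

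The only step demanding \emph{genuine} care is property $4$, and there the work is purely a matter of selecting the correct branch of \eqref{defg} in each one-sided limit. The key point is that the correction term $-\tfrac{M}{m+M}G_{m}(0,s)$ is independent of the first variable and continuous in the second near any diagonal point $(t,t)$ with $t\in(0,T)$; it therefore cancels from every difference of lateral limits, reducing both jump identities for $H_{m,M}$ to the corresponding ones for $G_{m}$. Evaluating \eqref{defg}, one finds $G_{m}(t^{-},t)=G_{m}(t,t^{+})=\tfrac{1}{e^{mT}-1}$ and $G_{m}(t^{+},t)=G_{m}(t,t^{-})=\tfrac{e^{mT}}{e^{mT}-1}$, so that both prescribed differences equal $\tfrac{e^{mT}-1}{e^{mT}-1}=1$, and the matching of one-sided limits is read off from the same evaluations; for $m=0$ the jump computed from \eqref{eqM0} is $\tfrac{1}{MT}\cdot MT=1$. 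In summary, every assertion is a direct consequence of the closed forms \eqref{ehm} and \eqref{eqM0} once the reduction $[t]\equiv 0$ on $[0,T)$ is in place, so that the proof is computational, with no essential obstacle beyond careful branch bookkeeping in the jump condition.
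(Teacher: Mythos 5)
Your proof is correct and takes essentially the same route the paper intends: the paper simply asserts that the proposition "is not difficult to verify" from Definition \ref{d1} and the closed forms \eqref{ehm} and \eqref{eqM0}, and your argument is exactly that direct verification, with the branch bookkeeping in \eqref{defg} for the jump condition carried out correctly. The only detail worth adding is the one-line check of property 3 when $m=0$ (there $\partial_t H_{0,M}=-\tfrac{1}{T}$ and $M\,H_{0,M}(0,s)=\tfrac{1}{T}$, so the equation still holds), a case you handle explicitly for the other properties but omit for that one.
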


\begin{remark}
	Notice that the previous result is valid for every $T>0$. Not only for $T \in (0,1]$.
	\end{remark}
In the sequel, we prove that $H_{m,M}$ satisfies a symmetry property.
\begin{proposition}
If $m+M \neq 0$ and $T \leq 1$ then, the following symmetry property holds:
\begin{equation*}
H_{m,M}(t,s)=-H_{-m,-M}(T-t,T-s), \textup{ for all }(t,s) \in \hat{J} \times \hat{J}, t \neq s.
\end{equation*}
\label{simetriafacil}
\end{proposition}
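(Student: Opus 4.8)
The plan is to prove the statement by direct verification from the closed forms already at our disposal: the kernel \eqref{defg} for $G_m$, the representation \eqref{ehm} for $H_{m,M}$, and the separate formula \eqref{eqM0} for the degenerate case $m=0$. The whole identity reduces to a single symmetry of the reflection-free kernel $G_m$, after which the correction term in \eqref{ehm} falls into place almost automatically. Note first that replacing $(m,M)$ by $(-m,-M)$ preserves the nondegeneracy hypothesis, since $-m+(-M)=-(m+M)\neq 0$, so $H_{-m,-M}$ is itself well defined; and that the map $(t,s)\mapsto(T-t,T-s)$ preserves $t\neq s$, so all evaluations stay off the diagonal.

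First I would establish the key lemma: for $m\neq 0$,
\[
G_m(t,s)=-G_{-m}(T-t,T-s),\qquad t\neq s.
\]
Since $(t,s)\mapsto(T-t,T-s)$ interchanges the triangles $\{s<t\}$ and $\{t<s\}$, I would check the two branches of \eqref{defg} separately. Using the elementary identity $\tfrac{1}{e^{-mT}-1}=\tfrac{-e^{mT}}{e^{mT}-1}$ and simplifying the exponent in each case, every branch of $G_{-m}(T-t,T-s)$ collapses to exactly $-G_m(t,s)$. This computation, though short, is the arithmetic heart of the argument.

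Next I would evaluate \eqref{ehm} at the pair $(-m,-M)$ and point $(T-t,T-s)$, obtaining
\[
H_{-m,-M}(T-t,T-s)=G_{-m}(T-t,T-s)-\frac{-M}{-m-M}\,G_{-m}(0,T-s),
\]
where $\tfrac{-M}{-m-M}=\tfrac{M}{m+M}$ is well defined because $m+M\neq 0$. Multiplying by $-1$ and applying the key lemma term by term, namely $-G_{-m}(T-t,T-s)=G_m(t,s)$ and (taking $t=T$ in the lemma) $G_{-m}(0,T-s)=-G_m(T,s)$, yields
\[
-H_{-m,-M}(T-t,T-s)=G_m(t,s)-\frac{M}{m+M}\,G_m(T,s).
\]
It then only remains to invoke the periodicity $G_m(0,s)=G_m(T,s)$ — immediate from \eqref{defg}, since both equal $e^{ms}/(e^{mT}-1)$, and also a consequence of property $5$ of Proposition \ref{carasimple} — to rewrite the right-hand side as $G_m(t,s)-\tfrac{M}{m+M}G_m(0,s)=H_{m,M}(t,s)$, which is the claim.

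Finally, the case $m=0$ must be handled separately, since $G_m$ is undefined there; here I would substitute the explicit formula \eqref{eqM0} into $-H_{0,-M}(T-t,T-s)$ and check its two branches directly, each reducing to the corresponding branch of $H_{0,M}(t,s)$. I do not expect a genuine obstacle in any of this: the argument is a finite case analysis, and the only points demanding care are the sign bookkeeping produced by the factor $1/(e^{-mT}-1)$ when passing from $G_{-m}$ back to $G_m$, and the correct use of the periodic boundary condition to swap $G_m(T,s)$ for $G_m(0,s)$.
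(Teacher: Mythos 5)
Your proof is correct, but it takes a genuinely different route from the paper. You verify the identity by brute force from the closed-form kernels: the key lemma $G_m(t,s)=-G_{-m}(T-t,T-s)$ checks out on both triangles (the map $(t,s)\mapsto(T-t,T-s)$ swaps them, and $\tfrac{1}{e^{-mT}-1}=\tfrac{-e^{mT}}{e^{mT}-1}$ supplies exactly the needed sign and exponential shift), the substitution into \eqref{ehm} together with $G_m(T,s)=G_m(0,s)$ then gives the claim, and your separate check of \eqref{eqM0} covers $m=0$. The paper instead argues structurally: it sets $w(t)=v(T-t)$, shows that $w$ solves the problem with parameters $(-m,-M)$ and right-hand side $-\sigma(T-\cdot)$ (this is where $T\le 1$ enters explicitly, via $v([T-t])=v(0)=v(T)=v(T-[t])$), and concludes by uniqueness of the Green's function. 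The paper's argument needs no explicit formulas, treats $m=0$ with no extra case, and is the template reused later for the reflection problem (Lemma \ref{simetriah}), where no such simple closed form is available; your computation, by contrast, is entirely self-contained arithmetic and doubles as a consistency check on \eqref{defg}, \eqref{ehm} and \eqref{eqM0}, but in your version the hypothesis $T\le 1$ is used only implicitly, through the fact that the representation \eqref{ehm} (derived from \eqref{final2}) is only valid for $T\in(0,1]$ --- it would be worth saying so. The only loose end in both treatments is the corner $s=T$, where your application of the key lemma at $t=T$ would land $G_{-m}$ on the diagonal; this is a measure-zero boundary issue the paper also glosses over.
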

\begin{proof}
Let $v$ be the unique solution of Problem \eqref{comprobar} for a given function $\sigma \in \mathcal{L}^{1}(I)$. We define $r(t)=T-t$ and $w(t)=v(r(t))$. Then, from the definition of function $w$, we deduce that
\begin{equation*}
\frac{\mathrm{d}}{\mathrm{d}t}w(t)=v'(r(t)) \,r'(t)=-v'(T-t)=m\,v(T-t)+M\,v([T-t])-\sigma(T-t) \textup{ a.e } t \in I, \quad w(0)=w(T).
\end{equation*}
Assuming $T \leq 1$, it follows that $v([T-t])=v(0)=v(T)=v(T-[t])$.
Consequently, we obtain that
\begin{equation*}
w'(t)-m\,w(t)-M\,w([t])=-\sigma(T-t), \textup{ a.e. } t \in I, \quad w(0)=w(T).
\end{equation*}
Therefore, on the one hand, we would have that
\begin{equation*}
w(t)=-{\int_{0}^{T}}{H_{-m,-M}(t,s) \sigma(T-s) \mathrm{d}s}=-\int_{0}^{T}{H_{-m,-M}(t,T-r) \sigma(r) \mathrm{d}r}.
\end{equation*}
On the other hand,
\begin{equation*}
v(T-t)=\int_{0}^{T}{H_{m,M}(T-t,s) \sigma(s) \mathrm{d}s}.
\end{equation*}
From the two previous expressions and by the uniqueness of the Green's function, we deduce that
\begin{equation*}
H_{m,M}(t,s)=-H_{-m,-M}(T-t,T-s) \, \textup{ for all }t,s \in I,
\end{equation*}
and this concludes the proof.
\end{proof}

Next, we will prove a necessary condition for the Green's function to be positive.


\begin{lemma}
A necessary condition for the Green's function $H_{m,M}$ associated with Problem \eqref{comprobar} to be positive is that $m+M>0$. Similarly, a necessary condition for $H_{m,M}$ to be negative is that $m+M<0$.
\label{necesaria}
\end{lemma}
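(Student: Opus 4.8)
The plan is to exploit the fact that the solution operator reproduces constant functions. First I would feed the constant datum $\sigma \equiv 1$ into Problem \eqref{comprobar}. Any constant function $v \equiv c$ satisfies $v'(t) = 0$ and, since $T \leq 1$ gives $[t] = 0$ on $I$, also $v([t]) = v(0) = c$; substituting into the equation collapses it to $(m+M)\,c = 1$, while the periodic condition $v(0) = v(T)$ holds automatically. As $m + M \neq 0$ guarantees uniqueness of the solution, the unique solution associated with $\sigma \equiv 1$ is precisely $v \equiv \tfrac{1}{m+M}$.

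Next I would convert this into an identity for the kernel. Writing the solution through the Green's function, $v(t) = \int_0^T H_{m,M}(t,s)\,\sigma(s)\,\mathrm{d}s$, and setting $\sigma \equiv 1$ yields
\[
\int_0^T H_{m,M}(t,s)\,\mathrm{d}s = \frac{1}{m+M}, \qquad \text{for all } t \in I .
\]
This can also be checked directly: for $m \neq 0$ it follows from the explicit expression \eqref{ehm} together with \eqref{now} and $\int_0^T G_m(0,s)\,\mathrm{d}s = \tfrac{1}{m}$, giving $\tfrac{1}{m}\bigl(1 - \tfrac{M}{m+M}\bigr) = \tfrac{1}{m+M}$; for $m = 0$ a one-line computation from \eqref{eqM0} returns the same value $\tfrac{1}{M}$.

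With the integral identity established, the conclusion is immediate. If $H_{m,M}$ is positive on $\mathring{I} \times \mathring{I}$, then for each fixed $t$ the map $s \mapsto H_{m,M}(t,s)$ is positive almost everywhere on $(0,T)$, so its integral $\tfrac{1}{m+M}$ is strictly positive, forcing $m + M > 0$. The negative case is entirely symmetric: if $H_{m,M} < 0$ on $\mathring{I} \times \mathring{I}$, the integral is strictly negative, whence $\tfrac{1}{m+M} < 0$ and therefore $m + M < 0$.

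I do not anticipate a genuine obstacle here: the whole argument rests on the single observation that constant data produce constant solutions, and the rest is sign bookkeeping. The only points requiring mild care are invoking uniqueness (legitimate precisely because $m+M \neq 0$, which is assumed for $H_{m,M}$ to exist) and the routine verification of the integral identity in the degenerate case $m = 0$, where the closed form \eqref{eqM0} rather than \eqref{ehm} must be used.
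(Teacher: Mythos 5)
Your proposal is correct and follows exactly the paper's argument: feed $\sigma\equiv 1$ into Problem \eqref{comprobar}, note that the unique solution is the constant $\tfrac{1}{m+M}=\int_0^T H_{m,M}(t,s)\,\mathrm{d}s$, and read off the sign. You merely spell out the verification of the integral identity (including the $m=0$ case via \eqref{eqM0}) in more detail than the paper does.
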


\begin{proof}
By taking $\sigma(t)=1$, the unique solution of Problem \eqref{comprobar} is given by $\frac{1}{m+M}$. It follows directly that for $H_{m,M}>0$, it is necessary that $m+M>0$, and for $H_{m,M}<0$, it is necessary that $m+M<0$.

Obviously, we also obtain that $m+M=0$ is a straight line of eigenvalues for Problem \eqref{comprobar}. In fact, from \eqref{ehm} and \eqref{eqM0}, we have that the eigenvalues are exactly such straight line.
\end{proof}

Next, let us see that we can determine the values of $(t,s) \in I \times I$ where the function $H_{m,M}$ attains its minimum when it is positive. We point out that a necessary condition is that $m+M>0$.

Considering Proposition \ref{carasimple} and $m \neq 0$, we have that
\begin{align*}
\frac{\partial }{\partial t}H_{m,M}(t,s)&=-m\,H_{m,M}(t,s)-M\,H_{m,M}(0,s) =-m\left(G_{m}(t,s)-\frac{M}{m+M}G_{m}(0,s) \right) \\
& \quad -M \left(G_{m}(0,s)-\frac{M}{m+M}G_{m}(0,s)\right)=-m\,G_{m}(t,s).
\end{align*}
Observing equation \eqref{defg}, we see that $m \, G_{m}(t,s)>0$ for all \( m \in \mathbb{R} \), $m \neq 0$ and for all \( (t,s) \in I \times I \). Therefore, for each fixed \( s \in I \), the function $t \rightarrow H_{m,M}(t,s) $ decreases with \( t \) and, since
\begin{itemize}
\item $H_{m,M}(t,s)$ is continuous except at $t = s$,
\item $H_{m,M}(0,s)=H_{m,M}(T,s)$ for all $s \in (0,T)$,
\item $H_{m,M}(t^+,t)-H_{m,M}(t^-,t)=1$ for all $t \in (0,T)$,
\end{itemize}
we deduce that the minimum can only be attained at $t = s^{-}$.

Moreover, it is easy to deduce, from expression \eqref{defg}, that
$$\frac{\partial}{\partial{s}}H_{m,M}(s^-,s)=m\,H_{m,M}(s^-,s) \textup{ }s \in (0,T).$$

Now, we define the function $q(s):= H_{m,M}(s^-,s)$, $s \in (0,T)$. As a consequence, we have that
\begin{align*}
q'(s)&=\frac{\mathrm{d}}{\mathrm{d}s}H_{m,M}(s^-,s)=\frac{\partial}{\partial{t}}H_{m,M}(s^-,s)+\frac{\partial}{\partial{s}}H_{m,M}(s^-,s)\\
&=-m\,G_{m}(s^-,s)+m\,H_{m,M}(s^-,s)=-\frac{mM}{m+M}G_{m}(0,s), \textup{ } s \in (0,T).
\end{align*}

Since $m+M>0$ and $m\,G_{m}(t,s)>0$, it is immediate to verify that $M \, q'(s)<0$ for all $M \neq 0$ and $s \in (0,T)$.

Therefore, since function $q$ is continuous on $(0,T)$, we finally obtain that
\begin{equation*}
	\min_{(t,s) \in I \times I} \displaystyle H_{m,M}(t,s) = \left\{ 
	\begin{array}{lll}
		\displaystyle \lim_{s \rightarrow 0^+} H_{m,M}(s^-,s) = \frac{m}{(-1+e^{mT})(m+M)}, & \text{if} & M<0,\\[12pt]
		\displaystyle \lim_{s \rightarrow T^-} H_{m,M}(s^-,s) = \frac{m+M-e^{mT} M}{(-1+e^{mT})(m+M)}, & \text{if} & M>0.
	\end{array}
	\right.
\end{equation*}

With all the above, we can now find, for each fixed $m$, the maximum value of $M_{0}>-m$ for which the Green's function $H_{m,M}$ is positive on $I \times I$. To do this, following equations \eqref{exprem} and \eqref{eqM0} and taking for simplicity $M_{1}=0$, we need to solve the following fixed-point problem:

\begin{equation*}
	M_{0} = \left\{
	\begin{array}{lll}
		\displaystyle \lim_{s \rightarrow 0^+}
		\frac{G_{m}(s^-,s)}{
			\displaystyle \int_{0}^{T} G_{m}(s,r) H_{m,M_{0}}([r],s) \, \mathrm{d}r}
		= m + M_{0}, & \text{if} & M_0 < 0, \\[12pt]
		\displaystyle \lim_{s \rightarrow T^-}
		\frac{G_{m}(s^-,s)}{
			\displaystyle \int_{0}^{T} G_{m}(s,r) H_{m,M_{0}}([r],s) \, \mathrm{d}r}
		= e^{-mT}(m + M_{0}), & \text{if} & M_0 > 0.
	\end{array}
	\right.
\end{equation*}

Therefore, we have that $M_{0}=\overline{T}_{m}^{0}(M_{0})$ if and only if $M_{0}=\frac{m}{e^{mT}-1}(>0)$.

We deduce that the Green's function is positive on $I \times I$ if and only if $M$ satisfies $-m<M < \frac{m}{e^{mT}-1}$ when $m \neq 0$. In the case $m=0$, by continuity, we have $0<M < 1/T$.

%
%

By using the symmetry property proved in Proposition \ref{simetriafacil}, we conclude that $H_{m,M}<0$ on $I \times I$ if and only if $\frac{m}{e^{-mT}-1}<M<-m$ if $m \neq 0$ and $M \in \left(-\frac{1}{T},0 \right)$ whenever $m=0$.

Noticed that we obtain, as a particular case $(T=1)$, Lemma \ref{lemcomproba}. See Figure \ref{todofacil}. It is important to point out that the arguments used here are completely different to the ones used in \cite{cabada2011first}.  

If we set the minimum of the function \( H_{m,M}(t,s) \) over the domain \( I \times I \) equal to zero, that is,
$$
\min_{(t,s) \in I \times I} H_{m,M}(t,s) = 0,
$$
and solve the resulting equation for \( M \), we obtain exactly the same expression as the one derived from taking the limit.

\begin{figure}[H]
	\begin{center}
	\includegraphics[scale=0.6]{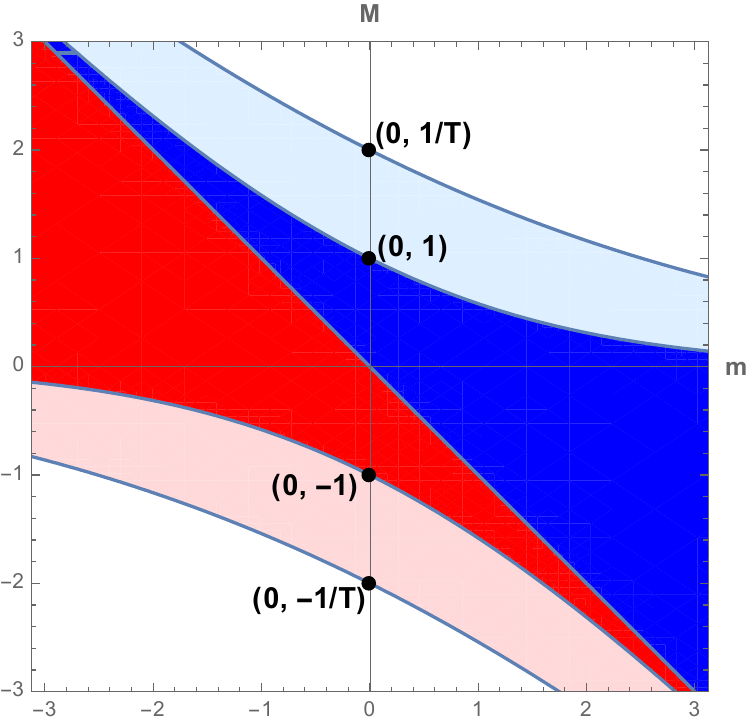}
	\end{center}
	\vspace{-20pt}
    \caption{The regions where the Green's function $H_{m,M}$ is positive are shown in light blue for $T=1/2$ and in dark blue for $T=1$. Similarly, the regions where it is negative are shown in light red for $T=1/2$ and in dark red for $T=1$.}
	\label{todofacil}
\end{figure}

\section{Periodic first order problems with involution and piecewise constant arguments}
\label{sec5}
In this section, we will focus on solving a first order differential equation with a reflection part and a piecewise constant dependence function. We will work with periodic boundary conditions. Specifically, we will consider the following functional differential equation:
\begin{equation}
v'(t)+mv(-t)+Mv([t])=h(t), \textup{ a.e. } t \in \hat{J}=[-T,T], \quad v(T)=v(-T).
\label{im1}
\end{equation}
Here \( m \) and \( M \) are real constants with the condition that both are not simultaneously zero, \( T >0 \) and \( h \in \mathcal{L}^1(\hat{J}) \).

We will say that a function $v: \hat{J} \rightarrow \mathbb{R}$ is a solution of Problem \eqref{im1} if $v \in \Omega^1$ and satisfies equation \eqref{im1}.

First, we introduce the properties of the Green's function for the particular case of $M=0$ (without piecewise constant argument). The results can be found in \cite{cabada2013comparison, cabada2015differential}.
\subsection{Solution of the equation $v'(t)+m\textup{ }v(-t)=h(t)$ with periodic boundary conditions}

We work with the problem
\begin{equation}
v'(t)+m\,v(-t)=h(t), \textup{ a.e } t \in \hat{J}, \quad v(T)=v(-T),
\label{mim1}
\end{equation}
where $m$ is a nonzero real constant, $T>0$, and $h \in \mathcal{L}^{1}(\hat{J})$.

Following the steps mentioned in \cite{cabada2013comparison,cabada2015differential}, we can proceed to work with the second order ordinary differential equation coupled to periodic boundary conditions:
\begin{equation}
\begin{aligned}
v''(t)+m^{2}v(t)&=f(t), \textup{ a.e } t\, \in \, \hat{J}, \\
v(T)-v(-T)&=0, \\
v'(T)-v'(-T)&=0,
\end{aligned}
\label{red1}
\end{equation}
with $f \in \mathcal{L}^{1}(\hat{J})$.

\begin{remark}
In this section $G_{m}$ denotes the Green's function of Problem \eqref{red1}.
\end{remark}

We present a proposition that gives us properties of the Green's function \( G_{m} \) related to Problem \eqref{red1}, the proof of which can be found in \cite[Proposition 3.1]{cabada2013comparison}.
\begin{proposition}
For all \( t, s \in \hat{J} \), the Green's function \( G_{m} \), related to Problem \eqref{red1}, satisfies the following properties:
\begin{enumerate}
\item $G_{m}(t,s)=G_{m}(s,t)$,
\item $G_{m}(t,s)=G_{m}(-t,-s)$,
\item $\frac{\partial{G_{m}}}{\partial{t}}(t,s)=\frac{\partial{G_{m}}}{\partial{s}}(s,t)$,
\item $\frac{\partial{G_{m}}}{\partial{t}}(t,s)=-\frac{\partial{G_{m}}}{\partial{t}}(-t,-s)$,
\item $\frac{\partial{G_{m}}}{\partial{t}}(t,s)=-\frac{\partial{G_{m}}}{\partial{s}}(t,s)$.
\end{enumerate}
\label{propg}
\end{proposition}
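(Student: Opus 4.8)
The plan is to read off all five identities from three structural symmetries of Problem \eqref{red1}: the self-adjointness of the operator, its invariance under the reflection $t \mapsto -t$, and its invariance under translation on the circle $\mathbb{R}/2T\mathbb{Z}$. Throughout I would lean on the axiomatic characterization of the Green's function in Definition \ref{d1} together with its uniqueness, so that to prove an identity $G_m(t,s)=G_m(\phi(t),\psi(s))$ it suffices to check that $(t,s)\mapsto G_m(\phi(t),\psi(s))$ satisfies the very same defining conditions as $G_m$. The most economical route, which settles properties 1, 2 and 5 at once, is to solve those defining conditions explicitly: with homogeneous solutions $\cos(mt),\sin(mt)$, matching continuity at $t=s$, the unit jump of $\partial_t G_m$ there, and the two periodic conditions yields the closed form $G_m(t,s)=\dfrac{\cos\!\big(m(T-|t-s|)\big)}{2m\sin(mT)}$, valid whenever $\sin(mT)\neq 0$ (i.e. away from the eigenvalues $m=k\pi/T$). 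This derivation is routine; I would state it and move on.

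From this expression, properties 1 and 2 are immediate, since $|t-s|$ is symmetric in $(t,s)$ and invariant under $(t,s)\mapsto(-t,-s)$. If one prefers to avoid the explicit formula, property 1 follows from self-adjointness: the periodic conditions kill the boundary term $[u'v-uv']_{-T}^{T}$ in Lagrange's identity, so applying Green's identity to $t\mapsto G_m(t,s_1)$ and $t\mapsto G_m(t,s_2)$ and accounting for the unit jump gives $G_m(s_1,s_2)=G_m(s_2,s_1)$. Property 2 follows by a uniqueness argument: for a solution $v$ of $Lv=f$ the function $w(t)=v(-t)$ solves $Lw=\tilde f$ with $\tilde f(t)=f(-t)$ and the same periodic conditions, and comparing the two representations $\int G_m(-t,s)f(s)\,\mathrm{d}s$ and $\int G_m(t,-s)f(s)\,\mathrm{d}s$ over arbitrary $f$ forces $G_m(-t,-s)=G_m(t,s)$.

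Properties 3 and 4 are then purely differential consequences: differentiating $G_m(t,s)=G_m(s,t)$ with respect to the first variable (so that the right-hand $t$ sits in the second slot) gives $\partial_t G_m(t,s)=\partial_s G_m(s,t)$, and differentiating $G_m(t,s)=G_m(-t,-s)$ with respect to $t$ via the chain rule produces the sign flip $\partial_t G_m(t,s)=-\partial_t G_m(-t,-s)$.

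The step I expect to be the genuine obstacle is property 5, $\partial_t G_m=-\partial_s G_m$, because it does not follow from properties 1--4 alone: one checks that combining 1 and 2 only ever reproduces 4, so property 5 carries extra content, namely translation invariance, which relies essentially on the coefficients of $L$ being constant. I would prove it by showing $G_m$ depends only on $t-s$. Viewing Problem \eqref{red1} on the circle of length $2T$, the shift $v\mapsto v(\cdot-h)$ sends solutions of $Lv=f$ to solutions of $Lv=f(\cdot-h)$ with the same periodic conditions, so uniqueness gives $G_m(t-h,s)=G_m(t,s+h)$ for all admissible $h$; differentiating in $h$ at $h=0$ yields $\partial_t G_m(t,s)+\partial_s G_m(t,s)=0$. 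The only delicate point is ensuring the shift genuinely preserves the boundary conditions, which is handled by working with the periodic extension. Of course, the explicit formula disposes of property 5 immediately as well, since there $G_m$ is a function of $|t-s|$ and the chain rule gives $\partial_t G_m=-\partial_s G_m$ directly.
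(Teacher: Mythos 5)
Your proposal is correct. Note that the paper itself does not prove this proposition: it defers entirely to \cite[Proposition 3.1]{cabada2013comparison}, and only afterwards records the closed form $G_{m}(t,s)=\cos\bigl(m(T-|t-s|)\bigr)/(2m\sin(mT))$ (written there as the two branches $\cos m(T+s-t)$ for $s\le t$ and $\cos m(T-s+t)$ for $s\ge t$), which is exactly the expression you derive. So your argument is a self-contained substitute rather than a parallel of an in-paper proof. Both of your routes are sound: the explicit formula immediately yields properties 1, 2 and 5, with 3 and 4 following by differentiating 1 and 2; and the structural alternatives (self-adjointness plus the vanishing of the boundary term under periodic conditions for property 1, the reflection $t\mapsto -t$ with a uniqueness/arbitrary-$\sigma$ argument for property 2, and translation invariance on the circle for property 5) are all valid and arguably more illuminating, since they explain \emph{why} the identities hold rather than verifying them. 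Your observation that property 5 is not a formal consequence of 1--4 but encodes the constant-coefficient (translation-invariant) nature of the operator is accurate and worth keeping; a kernel such as $t^{2}s^{2}$ satisfies 1--4 but not 5. The only caveat, which you already flag, is that all the derivative identities should be read off the diagonal $t=s$ (and the translation argument must be carried out on the periodic extension so that the moving singularity causes no trouble); the paper's blanket ``for all $t,s\in\hat{J}$'' is equally loose on this point.
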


With the above, let us now state a proposition that indicates how to obtain the Green's function for Problem \eqref{mim1}.
\begin{proposition}
Assume that \( m \neq k\pi/T \), \( k \in \mathbb{Z} \). Then, the Problem \eqref{mim1} has a unique solution given by the expression
\begin{equation*}
v(t):=\int_{-T}^{T}{\overline{G}_{m}(t,s)h(s) \mathrm{d}s},
\end{equation*}
where
\begin{equation*}
\overline{G}_{m}(t,s):=m \, G_{m}(t,-s)-\frac{\partial{G_{m}}}{\partial{s}}(t,s), (t,s) \in \hat{J} \times \hat{J},
\end{equation*}
and $G_{m}$ is the Green's function related to the second order periodic boundary value Problem \eqref{red1}.
\label{formamim}
\end{proposition}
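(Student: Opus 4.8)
The plan is to reduce everything to the second-order periodic problem \eqref{red1}, whose Green's function $G_m$ and its symmetry properties (Proposition \ref{propg}) are already at hand. Set
\[
W(t):=\int_{-T}^{T}G_m(t,s)\,h(s)\,\mathrm{d}s,
\]
so that $W$ is the unique solution of \eqref{red1} with $f=h$; in particular $W\in W^{2,1}(\hat{J})$, $W''+m^2W=h$ a.e., and $W(T)=W(-T)$, $W'(T)=W'(-T)$. The first step is to rewrite the candidate $v(t)=\int_{-T}^{T}\overline{G}_m(t,s)h(s)\,\mathrm{d}s$ purely in terms of $W$. Splitting $\overline{G}_m$ into its two summands, the substitution $s\mapsto -s$ together with property $2$ of Proposition \ref{propg} turns $m\int_{-T}^{T}G_m(t,-s)h(s)\,\mathrm{d}s$ into $mW(-t)$, while property $5$ ($\partial_sG_m=-\partial_tG_m$) turns $-\int_{-T}^{T}\partial_sG_m(t,s)h(s)\,\mathrm{d}s$ into $W'(t)$. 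I therefore expect the compact identity $v(t)=W'(t)+mW(-t)$.

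With this identity the verification is short. Differentiating gives $v'(t)=W''(t)-mW'(-t)$, while the reflected term is $mv(-t)=mW'(-t)+m^2W(t)$; adding these, the cross terms $\mp mW'(-t)$ cancel and one is left with $v'(t)+mv(-t)=W''(t)+m^2W(t)=h(t)$ a.e., which is exactly \eqref{mim1}. For the boundary condition, $v(T)=W'(T)+mW(-T)$ and $v(-T)=W'(-T)+mW(T)$ coincide precisely because $W$ satisfies both periodic conditions $W(T)=W(-T)$ and $W'(T)=W'(-T)$. Since $v$ is built from $W\in W^{2,1}(\hat{J})$ and its first derivative, this also shows $v\in\Omega^1$.

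Uniqueness I would obtain from the same reduction applied to the homogeneous problem. If $v'(t)+mv(-t)=0$ with $v(T)=v(-T)$, then differentiating and inserting $v'(-t)=-mv(t)$ (the equation evaluated at $-t$) yields $v''+m^2v=0$, while evaluating the equation at $\pm T$ and using $v(T)=v(-T)$ gives $v'(T)=v'(-T)$. Thus $v$ solves the homogeneous version of \eqref{red1}, whose only solution is the trivial one exactly when $m^2\neq(k\pi/T)^2$, i.e. under the hypothesis $m\neq k\pi/T$. Linearity then yields uniqueness of the solution of \eqref{mim1} for arbitrary $h$.

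The main obstacle is the rigorous justification of the rewriting step, namely the identity $\int_{-T}^{T}\partial_sG_m(t,s)h(s)\,\mathrm{d}s=-\tfrac{\mathrm{d}}{\mathrm{d}t}\int_{-T}^{T}G_m(t,s)h(s)\,\mathrm{d}s$. Because $\partial_tG_m$ jumps across the diagonal $s=t$, I must split the integral at $s=t$ before differentiating and check that the two boundary contributions cancel; they do, thanks to the continuity of $G_m$ itself across the diagonal (only its first derivative jumps), so no spurious $h(t)$ term survives. Keeping careful track of this diagonal jump, and confirming that the resulting $v$ genuinely lies in $\Omega^1$ for merely $L^1$ data $h$, is the only delicate point; the algebraic heart of the argument is immediate once the identity $v=W'+mW(-\cdot)$ is in place.
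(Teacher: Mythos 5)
Your proof is correct and follows the same route the paper indicates for this result: reduction to the second-order periodic problem \eqref{red1}, whose Green's function $G_{m}$ and the symmetries of Proposition \ref{propg} yield the key identity $v(t)=W'(t)+m\,W(-t)$ with $W(t)=\int_{-T}^{T}G_{m}(t,s)h(s)\,\mathrm{d}s$, after which both the equation, the periodic condition, and (for the homogeneous problem) uniqueness under $m\neq k\pi/T$ follow by the computations you give. The paper itself defers the proof to \cite{cabada2013comparison,cabada2015differential}; your direct verification of the candidate kernel (rather than deriving it by differentiating the first-order equation, which would require extra regularity of $h$) is a clean variant, and you correctly flag and resolve the only delicate point, namely that $\frac{\mathrm{d}}{\mathrm{d}t}\int_{-T}^{T}G_{m}(t,s)h(s)\,\mathrm{d}s=\int_{-T}^{T}\partial_{t}G_{m}(t,s)h(s)\,\mathrm{d}s$ because $G_{m}$ is continuous across the diagonal.
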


Again, it is possible to see that the Green's function $\overline{G}_{m}$ satisfies the following properties.
\begin{proposition}
\cite[Proposition $3.2.3$]{cabada2015differential}
$\overline{G}_{m}$ satisfies the following properties:
\begin{enumerate}
\item $\overline{G}_{m}$ and $\frac{\partial{\overline{G}_{m}}}{\partial{t}}$ exist and they are continuous on $(\hat{J} \times \hat{J}) \setminus \{(t,t), (t,-t)\}$,
\item $\overline{G}_{m}(t,t^{-})$ and $\overline{G}_{m}(t,t^{+})$ exist and are finite for all $t \in \hat{J}$ and they satisfy $\overline{G}_{m}(t,t^{-})-\overline{G}_{m}(t,t^{+})=1$, for all $t \in \hat{J}$,
\item $\frac{\partial{\overline{G}_{m}}}{\partial{t}}(t,s)+m\,\overline{G}_{m}(-t,s)=0$ for all $t,s \in \hat{J}$, $s \neq \pm t$,
\item $\overline{G}_{m}(T,s)=\overline{G}_{m}(-T,s)$ for all $s \in (-T,T)$,
\item $\overline{G}_{m}(t,s)=\overline{G}_{m}(-s,-t)$ for all $t,s \in \hat{J}$,
\item $\overline{G}_{m}(t,s)=-\overline{G}_{-m}(-t,-s)$ for all $t,s \in \hat{J}$,
\item $\overline{G}_{m}(t,T)=\overline{G}_{m}(t,-T)$ for all $t \in (-T,T)$.
\end{enumerate}
\label{caragbarra}
\end{proposition}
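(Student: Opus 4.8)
The plan is to verify the seven properties one at a time, starting from the explicit representation
\[
\overline{G}_{m}(t,s)=m\,G_{m}(t,-s)-\frac{\partial G_{m}}{\partial s}(t,s)
\]
given in Proposition \ref{formamim}, and systematically substituting the five symmetry and derivative identities for $G_m$ collected in Proposition \ref{propg}. Beyond those identities I would lean on three structural facts about the second-order Green's function $G_m$ of Problem \eqref{red1}, each available from Definition \ref{d1}: that $G_m$ is continuous on $\hat{J}\times\hat{J}$ while $\frac{\partial G_m}{\partial t}$ jumps by $1$ across the diagonal $t=s$; that $G_m(\cdot,s)$ solves the homogeneous equation $\frac{\partial^2 G_m}{\partial t^2}(t,s)+m^2 G_m(t,s)=0$ for $t\neq s$; and that, since Problem \eqref{red1} depends on the parameter only through $m^2$, one has $G_{-m}=G_m$.

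Properties 1 and 2 are the regularity and jump statements and come first. For property 1, since $G_m(t,-s)$ is continuous everywhere and $\frac{\partial G_m}{\partial s}$ is continuous off the diagonal, $\overline{G}_m$ is continuous away from $\{t=s\}$; differentiating in $t$ introduces $\frac{\partial G_m}{\partial t}(t,-s)$, continuous off $\{t=-s\}$, together with the mixed derivative $\frac{\partial^2 G_m}{\partial t\,\partial s}(t,s)$, continuous off $\{t=s\}$, which yields continuity of $\frac{\partial \overline{G}_m}{\partial t}$ away from the two diagonals as claimed. For property 2, only the term $-\frac{\partial G_m}{\partial s}(t,s)$ can jump as $s$ crosses $t$, the other being continuous; I would rewrite the one-sided limits using identity 3 of Proposition \ref{propg}, $\frac{\partial G_m}{\partial s}(t,s)=\frac{\partial G_m}{\partial t}(s,t)$, so that the known jump $\frac{\partial G_m}{\partial t}(t^+,t)-\frac{\partial G_m}{\partial t}(t^-,t)=1$ translates into $\overline{G}_m(t,t^-)-\overline{G}_m(t,t^+)=1$.

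The differential identity in property 3 is the heart of the argument and the step I expect to be most delicate. Differentiating the representation gives $\frac{\partial \overline{G}_m}{\partial t}(t,s)=m\frac{\partial G_m}{\partial t}(t,-s)-\frac{\partial^2 G_m}{\partial t\,\partial s}(t,s)$; I would eliminate the mixed derivative by first writing $\frac{\partial G_m}{\partial s}=-\frac{\partial G_m}{\partial t}$ (identity 5) and then using the ODE to obtain $\frac{\partial^2 G_m}{\partial t\,\partial s}(t,s)=m^2 G_m(t,s)$. Expanding $m\,\overline{G}_m(-t,s)$ and applying $G_m(-t,-s)=G_m(t,s)$ (identity 2), the $m^2G_m$ terms cancel and the target equality collapses to $\frac{\partial G_m}{\partial t}(t,-s)=\frac{\partial G_m}{\partial s}(-t,s)$, which follows by chaining identities 4 and 5. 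The recurring obstacle is purely bookkeeping: every property mixes the reflections $t\mapsto-t$, $s\mapsto-s$ with the two partial-derivative slots, and a single sign error or a confusion about which slot is differentiated propagates through the whole chain, so I would fix an unambiguous notation for the two partial derivatives before starting.

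Properties 4--7 are the boundary and reflection symmetries and are comparatively mechanical once that derivative dictionary is fixed. Properties 4 and 7 use the periodic boundary conditions of $G_m$ in its first variable, $G_m(T,s)=G_m(-T,s)$ and $\frac{\partial G_m}{\partial t}(T,s)=\frac{\partial G_m}{\partial t}(-T,s)$, transferred to the second slot via identities 1, 3 and 5. Property 5 is obtained by substituting $(t,s)\mapsto(-s,-t)$ into the representation and reducing the two resulting terms to those of $\overline{G}_m(t,s)$ using identities 1, 4 and 5. Property 6 is the only one that changes the parameter: here I would use $G_{-m}=G_m$ to write $\overline{G}_{-m}(-t,-s)=-m\,G_m(-t,s)-\frac{\partial G_m}{\partial s}(-t,-s)$ and then match $-\overline{G}_{-m}(-t,-s)$ with $\overline{G}_m(t,s)$ through the same reflection identities. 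In every case the computation terminates at one of the listed identities of Proposition \ref{propg}, so no genuinely new input beyond the three structural facts above is required.
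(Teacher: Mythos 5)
Your proposal is correct, but note that the paper itself gives no proof of this proposition: it is imported verbatim from \cite[Proposition 3.2.3]{cabada2015differential} with only the remark that ``it is possible to see'' that the properties hold, so there is no internal argument to compare against. Your reconstruction from the representation $\overline{G}_{m}(t,s)=m\,G_{m}(t,-s)-\frac{\partial G_{m}}{\partial s}(t,s)$ of Proposition \ref{formamim}, the five identities of Proposition \ref{propg}, and the axioms of Definition \ref{d1} (continuity of $G_m$, the unit jump of $\partial_t G_m$ across $t=s$, and the homogeneous equation $\partial_t^2 G_m+m^2G_m=0$ off the diagonal) is a valid and essentially complete verification: I checked each reduction and the sign bookkeeping works out. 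In particular, for property 3 the mixed derivative does reduce to $m^2G_m(t,s)$ via identity 5 and the ODE, and the residual identity $\frac{\partial G_m}{\partial t}(t,-s)=\frac{\partial G_m}{\partial s}(-t,s)$ follows from identities 4 and 5 as you claim; for property 2 the transfer of the jump from the first to the second slot via identity 3 gives exactly $\overline{G}_m(t,t^-)-\overline{G}_m(t,t^+)=1$. The only input you use that is not explicitly recorded anywhere in the paper is $G_{-m}=G_m$ for property 6; this is immediate because Problem \eqref{red1} depends on the parameter only through $m^2$, but since it is the one structural fact not listed in Proposition \ref{propg}, it deserves a one-line justification rather than being taken for granted.
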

%
%

It is not difficult to verify that the Green's function $G_{m}$ is given by the following expression:
\begin{equation*}
G_{m}(t,s)=\frac{1}{2m\sin(mT)}
\left\{
\begin{array}{lll}
\cos{m(T+s-t)}, & \mbox{\rm if}  & s \leq t, \\
\cos{m(T-s+t)}, & \mbox{\rm if}  & s \geq t.
\end{array}
\right.
\end{equation*}
Therefore, as shown in \cite[Section $3.2.1$]{cabada2015differential}
\begin{equation}
\overline{G}_{m}(t,s)=\frac{1}{2m\sin(mT)}
\left\{
\begin{array}{lll}
\cos{m(T-s-t)}+\sin{m(T+s-t)}, & \mbox{\rm if}  & -t \leq s<t, \\
\cos{m(T-s-t)}-\sin{m(T-s+t)}, & \mbox{\rm if}  & -s \leq t<s, \\
\cos{m(T+s+t)}+\sin{m(T+s-t)}, & \mbox{\rm if}  & |-t|>s, \\
\cos{m(T+s+t)}-\sin{m(T-s+t)}, & \mbox{\rm if}  & t<-|s|.
\end{array}
\right.
\label{formulag}
\end{equation}

From the previous formula, the sign of the Green's function $\overline{G}_{m}$ can be directly studied, leading to the following result \cite[Theorem $3.2.8$]{cabada2015differential}: 

\begin{theorem} 
	The Green's function, $\overline{G}_m$, related to Problem \eqref{mim1}, satisfies the following properties:
\begin{enumerate}
    \item If $mT \in \left(0, \frac{\pi}{4}\right)$, then $\overline{G}_{m}$ is strictly positive on $\hat{J} \times \hat{J}$.
    \item If $mT \in \left(-\frac{\pi}{4}, 0\right)$, then $\overline{G}_{m}$ is strictly negative on $\hat{J} \times \hat{J}$.
    \item If $mT = \frac{\pi}{4}$, then $\overline{G}_{m}$ vanishes at $P := \{(-T, -T), (0, 0), (T, T), (T, -T)\}$ and is strictly positive on $(\hat{J} \times \hat{J}) \setminus P$.
    \item If $mT = -\frac{\pi}{4}$, then $\overline{G}_{m}$ vanishes at $P$ and is strictly negative on $(\hat{J} \times \hat{J}) \setminus P$.
    \item If $mT \in \mathbb{R} \setminus \left[-\frac{\pi}{4}, \frac{\pi}{4}\right]$, then $\overline{G}_{m}$ changes sign on $\hat{J} \times \hat{J}$.
\end{enumerate}
\label{positividadg}
\end{theorem}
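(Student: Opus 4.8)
The plan is to work directly from the closed form \eqref{formulag}, converting each of its four branches into a product of two elementary trigonometric factors whose signs can be read off at sight. Before that, I would cut the work down using the symmetries in Proposition \ref{caragbarra}. The relation $\overline{G}_m(t,s)=-\overline{G}_{-m}(-t,-s)$ shows that replacing $m$ by $-m$ reverses the sign of $\overline{G}_m$ while sending $mT$ to $-mT$; hence statements~2 and~4, together with the case $mT<-\tfrac{\pi}{4}$ of statement~5, follow immediately from statements~1,~3 and the case $mT>\tfrac{\pi}{4}$. So I may assume $m>0$ and put $\tau:=mT>0$. The lines $s=t$ and $s=-t$ split the square into four triangles, and the relation $\overline{G}_m(t,s)=\overline{G}_m(-s,-t)$ maps $t>|s|$ bijectively onto $s<-|t|$ and $s>|t|$ onto $t<-|s|$. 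Thus it suffices to determine the sign of $\overline{G}_m$ on the two triangles $t>|s|$ and $s>|t|$.

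The computational heart is the factorization. On $-t\le s<t$, the identity $\cos P+\sin Q=2\cos\frac{P+\frac{\pi}{2}-Q}{2}\cos\frac{P-\frac{\pi}{2}+Q}{2}$ yields
\[
\overline{G}_m(t,s)=\frac{\cos\!\left(ms-\tfrac{\pi}{4}\right)\cos\!\left(mT-mt-\tfrac{\pi}{4}\right)}{m\sin(mT)},
\]
while on $-s\le t<s$ the analogous manipulation gives
\[
\overline{G}_m(t,s)=\frac{-\sin\!\left(\tfrac{\pi}{4}-mt\right)\sin\!\left(mT-ms-\tfrac{\pi}{4}\right)}{m\sin(mT)}.
\]
These are the two identities I would verify carefully (checking, for instance, the test values $\overline{G}_m(T,0)=\overline{G}_m(0,T)=\tfrac{1}{2m\sin mT}$), since everything else rests on them.

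Sign reading comes next. With $m>0$, the prefactor $\tfrac{1}{m\sin mT}$ has the sign of $\sin mT$, positive on $(0,\pi)$. On $t>|s|$ one has $mt\in(0,\tau]$ and $ms\in[-mt,mt)$, so on the open triangle both $ms-\tfrac{\pi}{4}$ and $mT-mt-\tfrac{\pi}{4}$ lie in $\left(-\tfrac{\pi}{2},\tfrac{\pi}{2}\right)$ precisely when $\tau\le\tfrac{\pi}{4}$, forcing both cosines strictly positive; the first factor can vanish only when $ms=-\tfrac{\pi}{4}$, which at $\tau=\tfrac{\pi}{4}$ forces $s=-t=-T$, i.e. the corner $(T,-T)$. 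A parallel check on $s>|t|$ gives $\sin(\tfrac{\pi}{4}-mt)>0$ and $\sin(mT-ms-\tfrac{\pi}{4})<0$ for $\tau\le\tfrac{\pi}{4}$, so that branch is positive as well. This yields strict positivity on the open triangles for $0<\tau<\tfrac{\pi}{4}$ (statement~1 for $m>0$) and, at $\tau=\tfrac{\pi}{4}$, positivity with equality only on $P$ (statement~3). For $\tau>\tfrac{\pi}{4}$ with $\tau\notin\pi\mathbb{Z}$, I would exhibit two points of opposite sign: from the first factorization $\overline{G}_m(T,0)$ has numerator $\cos^2\tfrac{\pi}{4}=\tfrac12>0$, whereas a point of $t>|s|$ with $t$ near $T$ and $s$ near $-T$ has $ms<-\tfrac{\pi}{4}$, so $\cos(ms-\tfrac{\pi}{4})<0$ while the second cosine stays positive; the numerators then have opposite signs, so $\overline{G}_m$ changes sign (statement~5), and statements~2,~4 and the rest of~5 follow by the relation $\overline{G}_m(t,s)=-\overline{G}_{-m}(-t,-s)$.

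The main obstacle is the bookkeeping on the singular lines $s=\pm t$ and at the four points of $P$, not the sign computation itself. Across $s=t$ the function jumps, while across $s=-t$ it is continuous (Proposition \ref{caragbarra}, parts~1--2), and every point of $P$ lies on one of these lines; consequently ``vanishing at $P$'' must be interpreted through the correct one-sided limits, which I would compute from the two factorized branches and reconcile with the periodicity relations $\overline{G}_m(T,s)=\overline{G}_m(-T,s)$ and $\overline{G}_m(t,T)=\overline{G}_m(t,-T)$. Showing that the closures of the triangle restrictions are continuous then upgrades the interior strict positivity to the stated conclusion on $\hat{J}\times\hat{J}$ and confirms that, at $\tau=\tfrac{\pi}{4}$, the only zeros are exactly the four points of $P$.
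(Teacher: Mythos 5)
The paper offers no proof of Theorem \ref{positividadg}: it asserts that the sign ``can be directly studied'' from \eqref{formulag} and refers to \cite[Theorem 3.2.8]{cabada2015differential}. Your proposal supplies exactly that direct study, and its backbone is sound. I verified both factorizations: $\cos m(T-s-t)+\sin m(T+s-t)=2\cos\left(ms-\tfrac{\pi}{4}\right)\cos\left(mT-mt-\tfrac{\pi}{4}\right)$ and $\cos m(T-s-t)-\sin m(T-s+t)=-2\sin\left(\tfrac{\pi}{4}-mt\right)\sin\left(mT-ms-\tfrac{\pi}{4}\right)$, and the reduction to $m>0$ and to the two triangles $t>|s|$, $s>|t|$ via parts 5 and 6 of Proposition \ref{caragbarra} is legitimate. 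The sign reading for $mT\in(0,\tfrac{\pi}{4}]$ and the identification of the zeros at $mT=\tfrac{\pi}{4}$ (through the appropriate one-sided limits at the points of $P$, which you correctly flag as the delicate bookkeeping) then go through. A useful by-product of your test values: the prefactor in \eqref{formulag} as printed, $\tfrac{1}{2m\sin(mT)}$, is inconsistent with the jump condition and with statement 2 of the theorem for $m<0$; it should read $\tfrac{1}{2\sin(mT)}$. Your argument is immune to this because you handle $m<0$ purely through the symmetry $\overline{G}_m(t,s)=-\overline{G}_{-m}(-t,-s)$ rather than through the formula, but the discrepancy is worth recording.

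The one step that would fail as written is in the sign-change argument for statement 5: you infer $\cos\left(ms-\tfrac{\pi}{4}\right)<0$ from $ms<-\tfrac{\pi}{4}$ alone. That implication holds only while $ms-\tfrac{\pi}{4}>-\tfrac{3\pi}{2}$, i.e.\ for $mT<\tfrac{5\pi}{4}$; for larger $mT$ the cosine at $s$ near $-T$ may be positive again and your two chosen points need not have opposite signs. The repair is immediate and should be stated: for $t$ close to $T$ one has $mt>\tfrac{\pi}{4}$, so $s_0=-\tfrac{\pi}{4m}$ lies in the interior of $[-t,t)$, the factor $\cos\left(ms-\tfrac{\pi}{4}\right)$ crosses a simple zero at $s=s_0$, and the other factor $\cos\left(mT-mt-\tfrac{\pi}{4}\right)$ stays close to $\cos\left(-\tfrac{\pi}{4}\right)\neq 0$; hence the numerator changes sign in $s$ for every $mT>\tfrac{\pi}{4}$ with $mT\notin\pi\mathbb{Z}$, and the symmetry in $m$ disposes of $mT<-\tfrac{\pi}{4}$. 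With that adjustment the proof is complete.
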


Now, by using the formula proved in Section $3$, we are in conditions to solve Problem \eqref{im1} in the next subsection.
\subsection{Solution of Problem \eqref{im1}}

Once the Green's function for Problem \eqref{mim1} has been obtained and analyzed, we can proceed to work with Problem \eqref{im1}. To calculate the Green's function associated with this new problem, which we denote by $\overline{H}_{m,M}$, we follow the steps outlined in Section \ref{sec3}. The Green's function $\overline{H}_{m,M}$ must satisfy the following proposition:
\begin{proposition}
	If $m \neq k\pi/T$ then the solution to Problem \eqref{im1} and the Green's function $\overline{H}_{m,M}$ is unique if and only if the matrix $A$, defined in \eqref{matriz}, associated with this problem is invertible.
	\label{prop59}
\end{proposition}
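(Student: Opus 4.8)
The plan is to mirror the reduction carried out in Section~\ref{sec3}, now taking the base problem to be the pure reflection Problem~\eqref{mim1} rather than \eqref{cambiado}. Since $m \neq k\pi/T$, Proposition~\ref{formamim} guarantees that the Green's function $\overline{G}_m$ of Problem~\eqref{mim1} exists and is unique. Hence a function $v \in \Omega^1$ solves Problem~\eqref{im1} if and only if it satisfies the integral equation
\begin{equation*}
v(t)=\int_{-T}^{T}\overline{G}_{m}(t,s)\bigl(h(s)-M\,v([s])\bigr)\,\mathrm{d}s,\qquad t\in\hat{J}.
\end{equation*}
Because $s\mapsto v([s])$ is constant on each of the intervals indexing the bracket function, the term $\int_{-T}^{T}\overline{G}_m(t,s)v([s])\,\mathrm{d}s$ is a finite linear combination of the nodal values $v([k])$, $k\in\{[-T],\dots,[T]\}$, with coefficients equal to the integrals of $\overline{G}_m(t,\cdot)$ over the corresponding intervals. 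Thus $v$ is completely determined by the finite vector $c=\bigl(v([k])\bigr)_k$ together with $h$.

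Evaluating the integral equation at the integer nodes $t=l\in\{[-T],\dots,[T]\}$ produces exactly the linear system $Ac=b$, with $A$ the matrix \eqref{matriz} built from the nodal coefficients $a_{l,k}$ of $\overline{G}_m$ and $b=\bigl(h(l)\bigr)_l$, $h(l)=\int_{-T}^{T}\overline{G}_m(l,s)h(s)\,\mathrm{d}s$. I would first prove sufficiency ($\Leftarrow$): if $A$ is invertible, then $c=A^{-1}b$ is uniquely determined by $h$, whence $v$ is unique by the integral formula; substituting $c=A^{-1}b$ and repeating the bookkeeping of Section~\ref{sec3} yields the explicit kernel $\overline{H}_{m,M}$, which is unique because it reproduces the unique solution for every $h\in\mathcal{L}^1(\hat{J})$.

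For necessity ($\Rightarrow$) I would argue by contraposition through the uniqueness criterion of Definition~\ref{d1}: it suffices to show that if $A$ is singular, then the homogeneous problem ($h\equiv 0$) admits a nontrivial solution. Choose $0\neq c_0\in\ker A$, set $\tilde{c}_0(s):=(c_0)_{[s]}$, and define
\begin{equation*}
v_0(t):=-M\int_{-T}^{T}\overline{G}_m(t,s)\,\tilde{c}_0(s)\,\mathrm{d}s .
\end{equation*}
Since $\tilde{c}_0\in\mathcal{L}^1(\hat{J})$, the defining property of $\overline{G}_m$ gives $v_0\in\Omega^1$, $v_0(T)=v_0(-T)$ and $v_0'(t)+m\,v_0(-t)=-M\,(c_0)_{[t]}$. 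Evaluating $v_0$ at an integer $l$ yields $v_0(l)=-M\sum_k a_{l,k}(c_0)_k$, and the relation $Ac_0=0$, namely $(c_0)_l+M\sum_k a_{l,k}(c_0)_k=0$, shows $v_0(l)=(c_0)_l=v_0([l])$ at every node. Consequently $-M\,(c_0)_{[t]}=-M\,v_0([t])$, so $v_0$ solves the homogeneous Problem~\eqref{im1}; moreover $v_0\neq 0$ because some component of $c_0$, hence some value $v_0(k)=(c_0)_k$, is nonzero. This contradicts uniqueness and completes the contrapositive.

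The main obstacle is the necessity direction, specifically the consistency check that the kernel vector $c_0$ reproduces itself as the nodal values of $v_0$, so that $v_0([t])=(c_0)_{[t]}$ and the piecewise constant term matches, together with the verification $v_0\in\Omega^1$; both hinge on the exact structure $A_{l,k}=M\,a_{l,k}+\delta_{l,k}$ and the interpolation property of $\overline{G}_m$. The sufficiency direction and the explicit reconstruction of $\overline{H}_{m,M}$ are then a direct transcription of the computation in Section~\ref{sec3}.
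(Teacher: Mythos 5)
Your argument is correct and follows the same reduction the paper itself relies on: Proposition \ref{prop59} is stated in the paper without an explicit proof, its justification being the construction of Section \ref{sec3}, which converts the problem into the nodal linear system $Ac=b$ once the base Green's function is available (here $\overline{G}_m$, guaranteed by $m\neq k\pi/T$ via Proposition \ref{formamim}). Your sufficiency direction is a direct transcription of that construction. Where you genuinely add something is the necessity direction, which the paper leaves implicit (it only records that invertibility of $A$ yields $c=A^{-1}b$): you exhibit, for singular $A$, a nontrivial homogeneous solution by lifting a kernel vector $c_0$ to $v_0(t)=-M\int_{-T}^{T}\overline{G}_m(t,s)\,(c_0)_{[s]}\,\mathrm{d}s$ and verifying the self-consistency $v_0(l)=(c_0)_l$ from the structure $A_{l,k}=M\,a_{l,k}+\delta_{l,k}$. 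That check is the crux and it goes through: $Ac_0=0$ reads $(c_0)_l=-M\sum_k a_{l,k}(c_0)_k=v_0(l)$, so $v_0([t])=(c_0)_{[t]}$, $v_0$ solves the homogeneous Problem \eqref{im1}, and $v_0\neq 0$ because it agrees with $c_0$ at the nodes; note also that a singular $A$ forces $M\neq 0$ (when $M=0$ the matrix $A$ is the identity), so the prefactor $-M$ does not trivialize $v_0$. Both directions are sound, and your write-up is in effect a completed version of the proof the paper omits.
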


 Thus, assuming that $m \neq \frac{k \pi}{T}$, $k \in \mathbb{Z}$ and matrix $A$, defined in \eqref{matriz}, is invertible, from equation \eqref{final1}, we arrive at
\begin{equation}
\overline{H}_{m,M}(t,s)=\overline{G}_{m}(t,s)-M \Big(\sum_{j=[-T]}^{[T]} \sum_{i=[-T]}^{[T]}{\overline{G}_{m}(j,s)\int_{-T}^{T}{\overline{G}_{m}(t,r)} \alpha_{ij}(r)} \mathrm{d}r\Big),
\label{exparti}
\end{equation}
where $\overline{G}_{m}$ is the Green's function of Problem \eqref{mim1} given by expression \eqref{formulag}, and $\alpha_{ij}$ are the functions defined in \eqref{g2} (Notice that $\alpha_{ij}(r)$ depends also on $m$ and $M$).

%
%

We proceed, then, to deduce a series of properties of the function $\overline{H}_{m,M}$ that will be useful to us. For simplicity, we will denote by $D$ and $D_{t}$ the following sets:
\begin{equation*}
D \equiv \{-[T], \ldots, 0, \ldots,[T]\}
\end{equation*}
and
\begin{equation*}
D_{t} \equiv D \cup \{t,-t\}, \, \textup{ for }t \in \hat{J} \textup{ given}. 
\end{equation*}

Using Definition \ref{d1} and the properties of $\overline{G}_{m}$ shown in Proposition \ref{caragbarra}, it is not difficult to verify the following result.

\begin{proposition}
Under the hypotheses of Proposition \ref{prop59}, the Green's function $\overline{H}_{m,M}$ related to Problem \eqref{im1} exists, is unique and satisfies the following properties:
\begin{enumerate}
    \item Given $s \in \mathring{\hat{J}}$, function $t \rightarrow\overline{H}_{m,M}(t,s)$ is well-defined and continuous for all $t \in \hat{J}$, $t \neq s$, and of class $\mathcal{C}^{1}$ for all $t \in \hat{J}$, $t \neq s$, $t \neq -s$, and $t \notin D \setminus \{0\}$ (see Figure \ref{rexicontinua}).

    \item Given $t \in \mathring{\hat{J}}$, function $s \rightarrow \overline{H}_{m,M}(t, \cdot)$ is well-defined and continuous for all $s \in \hat{J}$, $s \neq t$, and $s \notin D$. Moreover, $\overline{H}_{m,M}(t, \cdot)$ is of class $\mathcal{C}^{1}$ for all $s \in \hat{J}$, $s \notin D_{t}$ (see Figure \ref{rexicontinua}).

    \item For each $s \in (-T,T)$, the function $t \rightarrow \overline{H}_{m,M}(t,s)$ is the solution of the following differential equation:
    \begin{equation}
    \frac{\partial }{\partial t}\overline{H}_{m,M}(t,s)+m\,\overline{H}_{m,M}(-t,s)+M\,\overline{H}_{m,M}([t],s)=0,
    \label{enova}
    \end{equation}
    for all $t \in \hat{J}$, $t \neq s$, $t \neq -s$, $t \notin D \setminus \{0\}$.

    \item For each $t \in (-T,T)$, $t \notin D$, the lateral limits exist and are finite:
    $$\overline{H}_{m,M}(t^-,t)=\overline{H}_{m,M}(t,t^+)\quad \text{and} \quad \overline{H}_{m,M}(t,t^-)=\overline{H}_{m,M}(t^+,t),$$
    and furthermore,
    $$\overline{H}_{m,M}(t^+,t)-\overline{H}_{m,M}(t^-,t)=\overline{H}_{m,M}(t,t^-)-\overline{H}_{m,M}(t,t^+)=1.$$

    \item For each $s \in (-T,T)$, the function $t \rightarrow \overline{H}_{m,M}(t,s)$ satisfies the boundary condition:
    $$\overline{H}_{m,M}(T,s)=\overline{H}_{m,M}(-T,s).$$
        \item For each $t \in (-T,T)$, the function $s \rightarrow \overline{H}_{m,M}(t,s)$ satisfies the boundary condition:
    $$\overline{H}_{m,M}(t,T)=\overline{H}_{m,M}(t,-T).$$
\end{enumerate}
\label{cara}
\end{proposition}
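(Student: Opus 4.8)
The plan is to verify each of the six properties of $\overline{H}_{m,M}$ directly from its explicit expression \eqref{exparti}, leveraging the corresponding properties of $\overline{G}_{m}$ established in Proposition \ref{caragbarra} and the definition of the coefficients $\alpha_{ij}$ in \eqref{g2}. The essential structural observation is that \eqref{exparti} writes $\overline{H}_{m,M}(t,s)$ as the ``nice'' kernel $\overline{G}_{m}(t,s)$ plus a correction term
\begin{equation*}
-M\sum_{j=-[T]}^{[T]}\overline{G}_{m}(j,s)\,\Phi_{j}(t),\qquad \Phi_{j}(t):=\sum_{i=-[T]}^{[T]}\int_{-T}^{T}\overline{G}_{m}(t,r)\,\alpha_{ij}(r)\,\mathrm{d}r,
\end{equation*}
where for fixed $s$ the correction is a finite linear combination of the smooth functions $t\mapsto\overline{G}_{m}(t,r)$ and for fixed $t$ it is a finite linear combination of the kernels $s\mapsto\overline{G}_{m}(j,s)$ at the integer nodes $j\in D$. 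This separation is what lets me transfer regularity and the jump/boundary structure from $\overline{G}_{m}$ to $\overline{H}_{m,M}$.

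First I would treat the regularity claims, Properties $1$ and $2$. For Property $1$, fix $s\in\mathring{\hat J}$; since each $\Phi_{j}$ is an integral of $\overline{G}_{m}(t,\cdot)$ against a bounded indicator-type weight, $\Phi_{j}$ is continuous in $t$ on all of $\hat J$ and, by Property $1$ of Proposition \ref{caragbarra}, of class $\mathcal{C}^{1}$ wherever the integrand's $t$-singularities (at $r=\pm t$) do not obstruct differentiation; the only loss of smoothness of $t\mapsto\overline{H}_{m,M}(t,s)$ beyond that of $\overline{G}_{m}(t,s)$ comes from the characteristic functions $\alpha_{ij}$, whose jumps sit exactly at the integer endpoints, giving the exceptional set $t\in D\setminus\{0\}$; combined with the $t=s$ singularity of $\overline{G}_{m}(t,s)$ and the $t=\pm s$ loss inherited through $\overline{G}_{m}$, this produces precisely the stated exceptional set. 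Property $2$ is analogous but cleaner: for fixed $t$, $s\mapsto\overline{H}_{m,M}(t,s)$ equals $\overline{G}_{m}(t,s)$ minus a combination of the $s\mapsto\overline{G}_{m}(j,s)$, so its singularities in $s$ are those of $\overline{G}_{m}(t,s)$ (at $s=t$, and the reflected node $s=-t$) together with the nodes $s\in D$ arising from the $\overline{G}_{m}(j,s)$ terms, matching $s\notin D_{t}$.

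Next I would handle Properties $3$, $5$ and $6$, which are the ``functional'' identities and follow by linearity because the correction term is built from genuine Green's functions. Property $3$ is the key computation: applying the operator $\partial_{t}+m\,(\cdot)|_{t\mapsto-t}+M\,(\cdot)|_{t\mapsto[t]}$ to \eqref{exparti}, the $\overline{G}_{m}(t,s)$ part contributes $\tfrac{\partial}{\partial t}\overline{G}_{m}(t,s)+m\,\overline{G}_{m}(-t,s)$, which vanishes by Property $3$ of Proposition \ref{caragbarra}, leaving only the $\overline{G}_{m}(t,s)$ term's piecewise-constant contribution $M\,\overline{G}_{m}([t],s)$; the correction term, applied to the same operator, reproduces $-M\,\overline{G}_{m}([t],s)$ exactly when the matrix relation $Ac=b$ encoded in $\alpha_{ij}$ holds — this is precisely the algebraic identity that defined $\overline{H}_{m,M}$ in Section \ref{sec3}, so the two contributions cancel and \eqref{enova} results. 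Properties $5$ and $6$ (the two boundary conditions) follow because $\overline{G}_{m}(T,s)=\overline{G}_{m}(-T,s)$ and $\overline{G}_{m}(t,T)=\overline{G}_{m}(t,-T)$ by items $4$ and $7$ of Proposition \ref{caragbarra}, and each is inherited term-by-term through \eqref{exparti}: in $t$ the factors $\overline{G}_{m}(j,s)\,\Phi_{j}(t)$ satisfy the $T\!\leftrightarrow\!-T$ symmetry through $\Phi_{j}$, and in $s$ through the $\overline{G}_{m}(j,s)$ and the $\overline{G}_{m}(t,s)$ summands.

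Finally, Property $4$, the unit jump, is the part I expect to carry the most care. The correction term $-M\sum_{j}\overline{G}_{m}(j,s)\Phi_{j}(t)$ is, for $s\in(-T,T)$ avoiding the nodes, continuous across the diagonal $t=s$ because it involves no kernel with a singularity at $t=s$ (the nodes $j$ are integers, and $\Phi_{j}$ is continuous in $t$). Hence the entire jump of $\overline{H}_{m,M}$ across $t=s$ and $s=t$ coincides with that of $\overline{G}_{m}$, and the required equalities of the lateral limits together with the normalization $\overline{H}_{m,M}(t^{+},t)-\overline{H}_{m,M}(t^{-},t)=\overline{H}_{m,M}(t,t^{-})-\overline{H}_{m,M}(t,t^{+})=1$ follow immediately from item $2$ of Proposition \ref{caragbarra} for $\overline{G}_{m}$. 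The main obstacle throughout is bookkeeping: one must be careful that the jump in $\alpha_{ij}$ at integer points does not secretly contribute at $t=s$ for noninteger $s$, and that the differentiation under the integral sign in $\Phi_{j}$ is justified away from $r=\pm t$; once the separation of singularities is made explicit, each claim reduces to the corresponding property of $\overline{G}_{m}$ plus the defining linear algebra of $\overline{H}_{m,M}$.
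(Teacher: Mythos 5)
Your verification is correct and is exactly the route the paper intends: the paper omits the proof, stating only that the result follows from Definition \ref{d1}, the explicit expression \eqref{exparti} and Proposition \ref{caragbarra}, and your decomposition of \eqref{exparti} into $\overline{G}_{m}(t,s)$ plus the node-based correction $-M\sum_{j}\overline{G}_{m}(j,s)\Phi_{j}(t)$ is precisely the computation the paper performs in the Remark that follows (e.g.\ for the jump in Property $4$). The only point worth tightening is the $\mathcal{C}^{1}$ claim for $\Phi_{j}$, which is cleanest via the identity $\Phi_{j}'(t)=\sum_{i}\alpha_{ij}(t)-m\,\Phi_{j}(-t)$, showing directly that the derivative jumps exactly at the discontinuities of the piecewise constant $\sum_{i}\alpha_{ij}$, i.e.\ on $D\setminus\{0\}$.
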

\vspace{-0.1cm}
\begin{figure}[H]
	\begin{center}
	\includegraphics[scale=0.2]{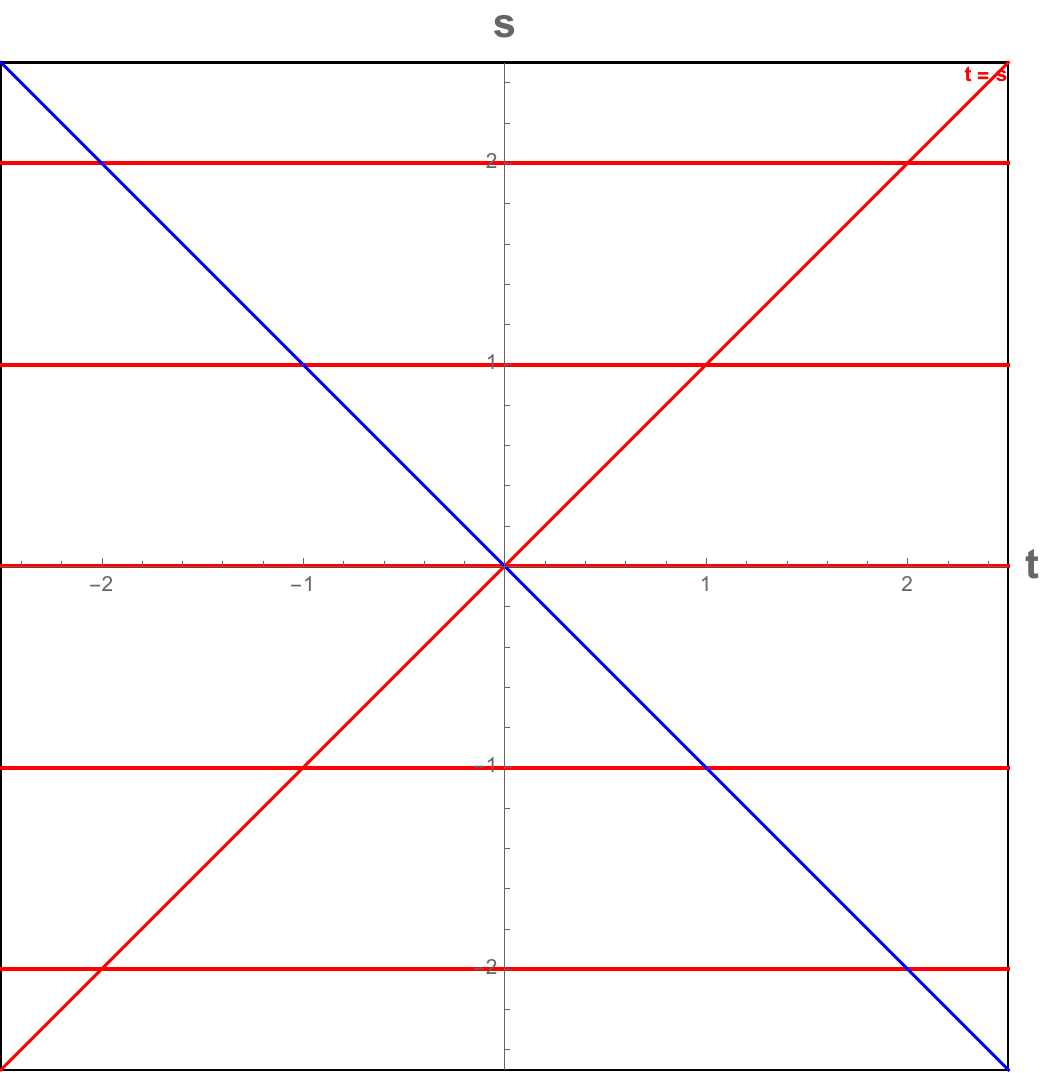}
	\end{center}
	\vspace{-20pt}
    \caption{The region where the Green's function $\overline{H}_{m,M}$ is continuous and of class $\mathcal{C}^{1}$ for $T = 5/2$. The Green's function is continuous throughout the square except along the red-marked lines, and is of class $\mathcal{C}^{1}$ throughout the square, except along the lines marked in red and blue.}
	\label{rexicontinua}
\end{figure}

\begin{remark}
In general, we have that
\begin{align*}
\overline{H}_{m,M}(t,t^-)-\overline{H}_{m,M}(t,t^+)&=\overline{G}_{m}(t,t^-)-\overline{G}_{m}(t,t^+) \\
& \quad-M \left( \sum_{j=[-T]}^{[T]} \sum_{i=[-T]}^{[T]}\left(\overline{G}_{m}(j,t^-)-\overline{G}_{m}(j,t^+)\right)\int_{-T}^{T}{\overline{G}_{m}(t,r) \alpha_{ij}(r)} \mathrm{d}r\right).
\end{align*}
If $t \in \hat{J}$, $t \neq j$ for all $j \in \{-[T], \ldots, 0, \ldots,[T]\}$, then $\overline{H}_{m,M}(t,t^-)-\overline{H}_{m,M}(t,t^+)=1$.

If $t = j_{0}$, $t \notin D$, then
\begin{equation*}
\overline{H}_{m,M}(j_{0},j_{0}^-)-\overline{H}_{m,M}(j_{0},j_{0}^+)=1-M \sum_{i=[-T]}^{[T]}{\int_{-T}^{T}{\overline{G}_{m}(j_{0},r) \alpha_{ij_{0}}(r) \mathrm{d}r}}.
\end{equation*}

%
%

\end{remark}




Furthermore, when we can ensure the uniqueness of the solution for Problem \eqref{im1}, we can also guarantee the existence of a symmetry of the function $\overline{H}_{m,M}$. Thus, we present the following Lemma.

\begin{lemma}
Under the assumptions of Proposition \ref{prop59}, the following symmetry property is satisfied
\begin{equation*}
\overline{H}_{m,M}(t,s)=-\overline{H}_{-m,-M}(-t,-s) \textup{ for all }t,s \in \hat{J} \times \hat{J} \,(s \notin D_{t}).
\end{equation*}
\label{simetriah}
\end{lemma}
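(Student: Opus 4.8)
The plan is to mimic the change-of-variables argument used in the proof of Proposition \ref{simetriafacil}, but replacing the reflection $t\mapsto T-t$ by the pure reflection $t\mapsto -t$, which is the natural symmetry attached to the interval $\hat{J}=[-T,T]$. First I would fix $h\in\mathcal{L}^{1}(\hat{J})$ and let $v$ be the unique solution of Problem \eqref{im1}, so that $v(t)=\int_{-T}^{T}\overline{H}_{m,M}(t,s)h(s)\,\mathrm{d}s$. I then set $w(t):=v(-t)$ and aim to show that $w$ is the unique solution of the companion problem with parameters $-m,-M$ and a suitably reflected datum.

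A preliminary observation I would record is the pointwise identity $[-t]=-[t]$, valid for every $t\in\hat{J}$ (checking separately the cases $t\in[n,n+1)$ and $t\in(-n-1,-n]$, and the integer/endpoint values). With this in hand, differentiating $w$ and evaluating \eqref{im1} at the point $-t$ gives, for a.e. $t$,
\begin{equation*}
w'(t)=-v'(-t)=m\,v(t)+M\,v(-[t])-h(-t)=m\,w(-t)+M\,w([t])-h(-t),
\end{equation*}
where I used $v(t)=w(-t)$ and $v(-[t])=w([t])$. Hence $w$ satisfies
\begin{equation*}
w'(t)+(-m)\,w(-t)+(-M)\,w([t])=-h(-t),\qquad w(T)=w(-T),
\end{equation*}
the boundary condition following from $v(T)=v(-T)$. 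This is precisely Problem \eqref{im1} for the parameter pair $(-m,-M)$ with right-hand side $-h(-\cdot)$, which is uniquely solvable under the hypotheses of Proposition \ref{prop59}.

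I would then represent $w$ through its Green's function, $w(t)=-\int_{-T}^{T}\overline{H}_{-m,-M}(t,s)h(-s)\,\mathrm{d}s$, and perform the substitution $r=-s$ to obtain $w(t)=-\int_{-T}^{T}\overline{H}_{-m,-M}(t,-r)h(r)\,\mathrm{d}r$. Comparing this with the direct expression $w(t)=v(-t)=\int_{-T}^{T}\overline{H}_{m,M}(-t,s)h(s)\,\mathrm{d}s$, and using that both identities hold for every $h\in\mathcal{L}^{1}(\hat{J})$, the uniqueness of the Green's function yields $\overline{H}_{m,M}(-t,s)=-\overline{H}_{-m,-M}(t,-s)$; replacing $t$ by $-t$ gives the claimed symmetry $\overline{H}_{m,M}(t,s)=-\overline{H}_{-m,-M}(-t,-s)$.

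The main obstacle I anticipate is bookkeeping rather than conceptual: I must make sure the manipulations respect the singular and non-regular set, so the uniqueness-of-Green's-function step should be invoked first in the almost-everywhere sense and then promoted to the pointwise statement on the admissible set $s\notin D_{t}$ using the continuity and regularity of $\overline{H}_{m,M}$ off that set established in Proposition \ref{cara}. Some care is also needed in verifying $[-t]=-[t]$ exactly at the integer points and in checking that the reflection $t\mapsto -t$, $s\mapsto -s$ maps the exceptional set $D_{t}$ to itself, so that the final pointwise equality is asserted precisely on the set where both sides are well defined.
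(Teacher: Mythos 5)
Your proof is correct, but it takes a genuinely different route from the paper's. The paper argues at the level of the kernel: it sets $v(t,s):=\overline{H}_{-m,-M}(-t,-s)$ and verifies directly that this function satisfies the characterizing properties of $-\overline{H}_{m,M}$ --- the functional differential equation \eqref{enova} (using $[-t]=-[t]$, just as you do), the periodicity $v(-T,s)=v(T,s)$, and the reversed jump $v(s^{+},s)-v(s^{-},s)=-1$ --- and then concludes by uniqueness of the Green's function. You instead argue at the level of solutions: you transport an arbitrary solution of \eqref{im1} by the reflection $w=v(-\cdot)$, identify the reflected problem it solves, and compare the two integral representations of $w$, which is precisely the strategy the paper uses for Proposition \ref{simetriafacil} but applies to a different lemma here. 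The two arguments are essentially dual and both rest on the same uniqueness hypothesis. Your version avoids having to verify the jump condition, and it yields as a by-product that unique solvability for the pair $(m,M)$ transfers to $(-m,-M)$, since $h\mapsto -h(-\cdot)$ is a bijection of $\mathcal{L}^{1}(\hat{J})$ --- a fact both proofs otherwise use implicitly when they write $\overline{H}_{-m,-M}$. The paper's version is slightly more economical in that it never needs to vary the datum $h$ or pass through an almost-everywhere identity of kernels; your closing remarks about promoting the a.e.\ equality to a pointwise one off $D_{t}$ via Proposition \ref{cara}, and about checking $[-t]=-[t]$ at the integers, correctly identify the only points that require care in your route.
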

\begin{proof}
First, we define
\begin{equation*}
v(t,s):=\overline{H}_{-m,-M}(-t,-s).
\end{equation*}
Therefore, using equality \eqref{enova}, since $[-t] = -[t]$, we have, for all $t \in \hat{J},\, t \neq s, \,t \neq -s \textup{ and }t \notin D \setminus \{0\}$, the following property:
\begin{align*}
\frac{\partial{v}}{\partial{t}}(t,s)&=-\frac{\partial{\overline{H}_{-m,-M}}}{\partial{t}}(-t,-s) =-m\,\overline{H}_{-m,-M}(t,-s)-M\,\overline{H}_{-m,-M}([-t],-s)\\
&=-m\,v(-t,s)-M\,v([t],s),
\end{align*}
that is:
\begin{equation*}
\frac{\partial{v}}{\partial{t}}(t,s)+m\,v(-t,s)+M\,v([t],s)=0 \textup{ for all }(t,s) \in \hat{J} \times \hat{J}, t \neq s, t \neq -s \textup{ and }t \notin D \setminus \{0\}.
\end{equation*}

On the other hand, from the periodicity of $\overline{H}_{m,M}$ shared in Proposition \ref{cara}, we would have that
\begin{equation*}
v(-T,s)=\overline{H}_{-m,-M}(T,-s)=\overline{H}_{-m,-M}(-T,-s)=v(T,s), \quad s \in (-T,T)
\end{equation*}
and, using Proposition \ref{cara}, Part $4$, we have that for all $s \notin D_{t}$,
\begin{align*}
\overline{H}_{-m,-M}(s^{+},s)-\overline{H}_{-m,-M}(s^{-},s)=1.
\end{align*}
Therefore
\begin{equation*}
 v(s^{+},s)-v(s^{-},s)=\overline{H}_{-m,-M}((-s)^{-},-s)-\overline{H}_{-m,-M}((-s)^{+},-s)=-1.
\end{equation*}
Thus, the uniqueness of the solution, implies that
\begin{equation*}
v(t,s)=-\overline{H}_{m,M}(t,s)
\end{equation*}
or, which is the same,
\begin{equation*}
\overline{H}_{m,M}(t,s)=-\overline{H}_{-m,-M}(-t,-s) \quad \forall (t,s) \in \hat{J} \times \hat{J}, s \notin D_{t}.
\end{equation*}
\end{proof}

From previous Lemma, it is immediate to verify that

\begin{corollary}
$\overline{H}_{m,M}$ is positive on $\hat{J} \times \hat{J}$ if and only if $\overline{H}_{-m,-M}$ is negative on $\hat{J} \times \hat{J}$.
\end{corollary}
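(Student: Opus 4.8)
The plan is to derive the corollary directly from the symmetry property established in Lemma \ref{simetriah}, which states that $\overline{H}_{m,M}(t,s) = -\overline{H}_{-m,-M}(-t,-s)$ for all $(t,s) \in \hat{J} \times \hat{J}$ with $s \notin D_t$. The key observation is that the map $(t,s) \mapsto (-t,-s)$ is an involution that sends $\hat{J} \times \hat{J}$ onto itself bijectively, so pointwise positivity of one Green's function is equivalent to pointwise negativity of the other, up to this change of variables.

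First I would assume that $\overline{H}_{m,M} > 0$ on $\hat{J} \times \hat{J}$ and fix an arbitrary point $(t,s)$ with $s \notin D_t$. Applying Lemma \ref{simetriah} gives $\overline{H}_{-m,-M}(-t,-s) = -\overline{H}_{m,M}(t,s) < 0$. Next I would note that as $(t,s)$ ranges over all admissible pairs of $\hat{J} \times \hat{J}$, the point $(-t,-s)$ also ranges over all such pairs (the set of exceptional points $s \in D_t$ is preserved under negation, since $D = \{-[T],\ldots,[T]\}$ is symmetric about $0$ and the condition $s \in \{t,-t\}$ is symmetric under simultaneous sign change). Hence $\overline{H}_{-m,-M}(u,w) < 0$ for every $(u,w)$ in the domain, establishing that $\overline{H}_{-m,-M}$ is negative on $\hat{J} \times \hat{J}$. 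The converse direction is entirely symmetric: assuming $\overline{H}_{-m,-M} < 0$ and applying the same lemma (with the roles of the parameter pairs $(m,M)$ and $(-m,-M)$ interchanged, which is legitimate since the symmetry relation is itself symmetric under this swap) yields $\overline{H}_{m,M} > 0$.

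The main technical point to handle carefully is the exceptional set where the symmetry relation is only asserted to hold, namely $s \in D_t$, so that one must confirm the sign statements are understood on the common domain of definition and that the negation map genuinely preserves the admissible region; this is immediate from the symmetry of $D_t$ under $(t,s) \mapsto (-t,-s)$ but deserves a brief remark. Otherwise the argument is a direct logical equivalence with no real obstacle, since it simply transports the inequality through the sign-reversing identity of Lemma \ref{simetriah}.
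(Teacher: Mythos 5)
Your proposal is correct and follows exactly the route the paper intends: the paper states the corollary as an immediate consequence of Lemma \ref{simetriah}, and your argument simply spells out the routine details, namely that the sign-reversing identity $\overline{H}_{m,M}(t,s)=-\overline{H}_{-m,-M}(-t,-s)$ transports positivity to negativity because the involution $(t,s)\mapsto(-t,-s)$ maps $\hat{J}\times\hat{J}$ onto itself and preserves the exceptional set. Your care with the set $D_t$ is a reasonable addition but does not change the substance of the argument.
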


In addition to the symmetry in the Green's function $\overline{H}_{m,M}$, we also have a symmetry in the matrix $A$ associated with the problem, given by equation \eqref{matriz}. Indeed, let $E \equiv A(m,M)$ and $F \equiv A(-m,-M)$ denote the matrix $A$ associated with Problem \eqref{im1} with parameters $m$ and $M \in \mathbb{R}$, and $-m$ and $-M \in \mathbb{R}$, respectively.  Then, the following Proposition holds:
\begin{proposition}
Let $e_{ij}$ be the elements of the matrix $E$ and $f_{ij}$ the elements of the matrix $F$. Then, it is verified that
\begin{equation*}
e_{ij}=f_{-i,-j},
\end{equation*}
with $i,j \in \{ [-T], \ldots,0, \ldots, [T]\}$. 

Moreover, it holds that $\det{E} = \det{F}$, i.e.: 
\begin{equation*}
\det{A(m,M)}=\det{A(-m,-M)} \textup{ for all } m,M \in \mathbb{R}.
\end{equation*}
\end{proposition}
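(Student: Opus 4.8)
The plan is to obtain the entrywise identity directly from the description of $A$ in \eqref{matriz} combined with the reflection symmetry of $\overline{G}_m$ recorded in Proposition \ref{caragbarra}, and then to deduce the determinant equality from the entrywise identity by recognizing it as a conjugation by a permutation matrix.

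First I would make the structure of the entries explicit. Writing
\[
e_{ij}=M\,a^{(m)}_{ij}+\delta_{ij},\qquad f_{ij}=-M\,a^{(-m)}_{ij}+\delta_{ij},
\]
where $\delta_{ij}$ is the Kronecker delta accounting for the ``$+1$'' on the diagonal of \eqref{matriz}, and
\[
a^{(m)}_{ij}=\int_{\Delta_j}\overline{G}_m(i,s)\,\mathrm{d}s,\qquad \Delta_j:=\{s\in\hat{J}:[s]=j\},
\]
so that $a^{(m)}_{ij}$ is precisely the integral $a_{i,j}$ appearing in \eqref{matriz} (with $\overline{G}_m$ in the role of the Green's function of this section), including the central cell $\Delta_0=(\max\{-T,-1\},\min\{T,1\})$ and the extreme cells $\Delta_{\pm[T]}$. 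The single geometric fact I need is that this partition of $\hat{J}$ into level sets of $[\cdot]$ is invariant under reflection through the origin, i.e.\ $\Delta_{-j}=-\Delta_j$ for all $j\in D$, which is immediate from the definition of $[\cdot]$ and holds for the central and extreme cells as well.

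Next I would compute $f_{-i,-j}$ and match it to $e_{ij}$. Since $\delta_{-i,-j}=\delta_{ij}$, it suffices to relate $a^{(-m)}_{-i,-j}$ to $a^{(m)}_{ij}$. Applying part $6$ of Proposition \ref{caragbarra} (with the parameter negated) in the form $\overline{G}_{-m}(-i,s)=-\overline{G}_m(i,-s)$, and then the measure-preserving change of variables $u=-s$ which carries $\Delta_{-j}$ onto $-\Delta_{-j}=\Delta_j$, I obtain
\[
a^{(-m)}_{-i,-j}=\int_{\Delta_{-j}}\overline{G}_{-m}(-i,s)\,\mathrm{d}s
=-\int_{\Delta_{-j}}\overline{G}_m(i,-s)\,\mathrm{d}s
=-\int_{\Delta_j}\overline{G}_m(i,u)\,\mathrm{d}u
=-a^{(m)}_{ij}.
\]
Substituting back gives $f_{-i,-j}=-M\bigl(-a^{(m)}_{ij}\bigr)+\delta_{ij}=M\,a^{(m)}_{ij}+\delta_{ij}=e_{ij}$, which is the asserted relation $e_{ij}=f_{-i,-j}$.

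For the determinant, I would reinterpret this identity as a conjugation. Re-indexing $e_{ij}=f_{-i,-j}$ yields $f_{kl}=e_{-k,-l}$, and if $P$ denotes the permutation (exchange) matrix on $D$ associated with the involution $j\mapsto-j$, then $(PEP)_{kl}=e_{-k,-l}$, so $F=PEP$. Since $P$ is a permutation matrix, $\det P=\pm1$ and $P^2$ is the identity, whence $\det F=(\det P)^2\det E=\det E$, that is $\det A(m,M)=\det A(-m,-M)$. I expect the only delicate point — more bookkeeping than genuine obstacle — to be tracking the three sign changes (from $M\mapsto-M$, from the $\overline{G}_m$/$\overline{G}_{-m}$ symmetry, and from the reflection $u=-s$) and confirming that $\Delta_{-j}=-\Delta_j$ holds for the non-generic central and boundary cells and not merely for the interior integer cells; with those in hand the argument is routine.
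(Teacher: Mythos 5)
Your proposal is correct and follows essentially the same route as the paper: the entrywise identity is obtained from Part 6 of Proposition \ref{caragbarra} combined with the substitution $u=-s$ (using that the level sets of $[\cdot]$ are symmetric under reflection), and the determinant equality follows because $F$ is obtained from $E$ by the permutation $j\mapsto -j$ applied simultaneously to rows and columns. Your phrasing via $F=PEP$ with $P$ a permutation matrix is just a slightly more formal version of the paper's ``swap $[T]$ rows and the same number of columns'' argument.
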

\begin{proof}
Observing as the matrices $E$ and $F$ are, we see that all the equalities we want to prove are of the following type:
\begin{equation*}
M\int_{a}^{b}{\overline{G}_{m}(i,s) \mathrm{d}s}=-M \int_{-b}^{-a}{\overline{G}_{-m}(-i,s) \mathrm{d}s},
\end{equation*}
with $a,b \in \mathbb{R}$.

Therefore, to show that $e_{ij} = f_{-i,-j}$, it is enough to demonstrate that
\begin{equation*}
\int_{-a}^{-b}{\overline{G}_{m}(-c,s)\mathrm{d}s}=-\int_{a}^{b}{\overline{G}_{-m}(c,s) \mathrm{d}s} \textup{ for all }a,b,c \in \mathbb{R}.
\end{equation*}

By making the variable change $u = -s$, we have that
\begin{equation*}
\int_{a}^{b}{\overline{G}_{m}(c,s) \mathrm{d}s}=\int_{-b}^{-a}{\overline{G}_{m}(c,-u) \mathrm{d}u}.
\end{equation*}

Next, from Part $6$ of Proposition \ref{caragbarra}, we obtain that
\begin{equation*}
\int_{a}^{b}{\overline{G}_{m}(c,s) \mathrm{d}s}=-\int_{-b}^{-a}{\overline{G}_{-m}(-c,u) \mathrm{d}u}
\end{equation*}
which is precisely what we wanted to prove.

Finally, let us see that $\det{E} = \det{F}$. From the equality $e_{ij} = f_{-i,-j}$, we can deduce that to obtain the matrix $F$ from $E$, we need to swap $[T]$ rows and the same number of columns. Therefore, by the properties of matrices, we conclude that $\det{E} = \det{F}$.
\end{proof}

\begin{figure}[H]
	\begin{center}
	\includegraphics[scale=0.5]{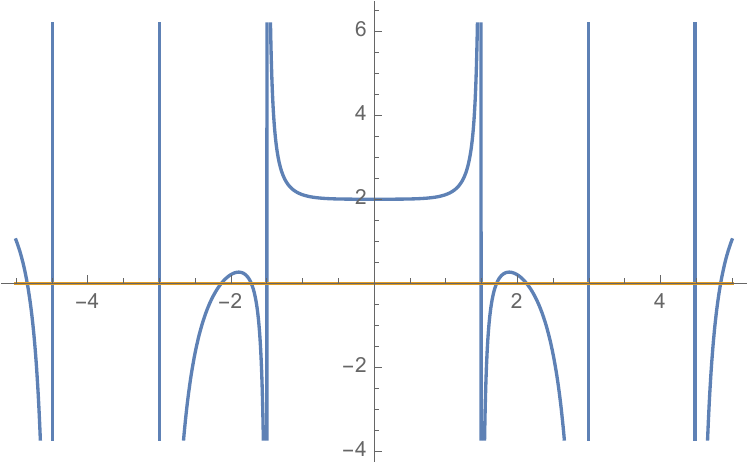}
	\end{center}
	\vspace{-20pt}
    \caption{Representation of the determinant of $A$ when $M = m$ and $T = 2.1$ as a function of the parameter $m$.}
	\label{figura10}
\end{figure}

\begin{remark}
Observing Figure \ref{figura10}, it is interesting to note that the values where the determinant has vertical asymptotes coincide with the spectrum of Problem \eqref{mim1}.
\end{remark}

On the other hand, we can also state the following Lemma regarding the determinant of $A$.
\begin{lemma}
Given a matrix $A$ related to Problem \eqref{im1} with parameters $T >0$, $m$, $M \in \mathbb{R}$ such that $m = -M$, then $\det{A(m,M)} = 0$.
\label{detcero}
\end{lemma}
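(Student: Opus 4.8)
The plan is to exhibit an explicit nonzero vector in the kernel of $A$ — namely the all-ones vector — by showing that, when $m=-M$, every row of $A$ sums to zero. Recall that $A$ is the matrix \eqref{matriz} with entries of the form $M\,a_{lk}+\delta_{lk}$, where the indices $l,k$ range over $\{[-T],\ldots,0,\ldots,[T]\}$ and, in the present setting of Problem \eqref{im1}, $a_{lk}=\int_{I_k}\overline{G}_m(l,s)\,\mathrm{d}s$ with the intervals $I_k$ exactly those appearing in the definitions of $a_{l,[-T]},\,a_{l,k},\,a_{l,0},\,a_{l,[T]}$ that precede \eqref{matriz}. Hence the $l$-th row sum is $\sum_{k}\bigl(M\,a_{lk}+\delta_{lk}\bigr)=M\sum_{k}a_{lk}+1$. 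Since the intervals $\{I_k\}$ tile $\hat{J}=[-T,T]$ (with the central cell $I_0=(\max\{-T,-1\},\min\{T,1\})$ absorbing the two unit intervals around the origin), one has $\sum_{k}a_{lk}=\int_{-T}^{T}\overline{G}_m(l,s)\,\mathrm{d}s$.

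The key step is to evaluate this last integral, and I claim $\int_{-T}^{T}\overline{G}_m(l,s)\,\mathrm{d}s=1/m$ for every index $l$. This follows from Proposition \ref{formamim}: the constant function $v\equiv 1/m$ solves Problem \eqref{mim1} with $h\equiv 1$, because $v'(t)+m\,v(-t)=0+m\cdot\tfrac1m=1$ and the periodic condition $v(T)=v(-T)$ holds trivially; by uniqueness of the solution (guaranteed under the standing hypothesis $m\neq k\pi/T$) and the Green's-function representation, $\tfrac1m=v(l)=\int_{-T}^{T}\overline{G}_m(l,s)\cdot 1\,\mathrm{d}s$. Substituting this into the row sum gives $\sum_{k}A_{lk}=M/m+1=(m+M)/m$, which vanishes precisely when $m=-M$ (note $m\neq 0$, since $m=-M=0$ is excluded by the standing assumption that $m,M$ are not both zero).

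Consequently, for $m=-M$ we obtain $A\,(1,\ldots,1)^{\top}=0$ with a nonzero vector, so $A$ is singular and $\det A=0$. The only genuine obstacle is the integral identity $\int_{-T}^{T}\overline{G}_m(l,s)\,\mathrm{d}s=1/m$; the remaining ingredients are the elementary observations that the $I_k$ partition $\hat{J}$ and that each row sum then collapses to $(m+M)/m$. Conceptually, the vanishing of the row sum mirrors the fact that a constant function solves the homogeneous version of Problem \eqref{im1} when $m=-M$ (indeed $v'(t)+m\,v(-t)-m\,v([t])=0$ for $v$ constant), and the all-ones kernel vector is exactly the vector of nodal values of this constant solution; this gives an alternative, solution-theoretic route to the same conclusion should the direct computation of $\int_{-T}^{T}\overline{G}_m(l,s)\,\mathrm{d}s$ from \eqref{formulag} be preferred as a cross-check.
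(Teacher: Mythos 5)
Your proof is correct, but it takes a genuinely different route from the paper's. The paper argues at the level of the boundary value problem: using the equivalence between invertibility of $A$ and unique solvability recorded in Proposition \ref{prop59}, it observes that any constant $v \equiv C$ satisfies $v'(t)+m\,v(-t)+M\,v([t])=C(m+M)$, so for $m+M=0$ the homogeneous version of Problem \eqref{im1} has nontrivial constant solutions and $A$ must therefore be singular (indeed, $m+M=0$ is a line of eigenvalues). You instead work directly on the matrix: you exhibit the all-ones vector in the kernel by showing that each row sum equals $M\int_{-T}^{T}\overline{G}_{m}(l,s)\,\mathrm{d}s+1=M/m+1=(m+M)/m$, via the identity $\int_{-T}^{T}\overline{G}_{m}(l,s)\,\mathrm{d}s=1/m$ — an identity the paper itself records in the remark preceding \eqref{exparti2}, obtained by exactly your constant-solution argument applied to Problem \eqref{mim1} with $h\equiv 1$. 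The two arguments are two faces of the same observation (your kernel vector is the vector of nodal values of the paper's constant solution, as you note yourself), but yours is self-contained at the linear-algebra level and does not route through the solvability equivalence of Proposition \ref{prop59}, at the price of needing the tiling of $[-T,T]$ by the cells $I_k$ and the integral identity; the paper's version is shorter and makes the spectral interpretation immediate. Your handling of the side conditions is correct: $m\neq 0$ because $m$ and $M$ are assumed not simultaneously zero, and the existence of $A$ presupposes that $\overline{G}_{m}$ exists, i.e.\ $m\neq k\pi/T$.
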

\begin{proof}
The previous analysis shows that if Problem \eqref{mim1} has a unique solution then $det{A(m,M)}=0$ if and only if Problem  \eqref{im1} with $h=0$ on $\hat{J}$, has nontrivial solutions. Thus, if we consider a constant solution \( v(t) = C \) for all \( t \in \hat{J} \), then
\begin{equation*}
v'(t)+mv(-t)+Mv([t])=C(m+M).
\end{equation*}
Thus, \( m + M = 0 \) is a straight line of eigenvalues, and consequently, \( \det{A(m,M)} = 0 \).
\end{proof}


On the other hand, from Lemma \ref{detcero}, it is easy to deduce the following corollary.
\begin{corollary}
A necessary condition for the Green's function \( \overline{H}_{m,M} \) associated with Problem \eqref{im1} to be positive on \( \hat{J} \times \hat{J} \) is that \( m + M > 0 \). Similarly, a necessary condition for \( \overline{H}_{m,M} \) to be negative on \( \hat{J} \times \hat{J} \) is that \( m + M < 0 \).
\end{corollary}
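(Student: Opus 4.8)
The plan is to mimic the argument used for Lemma \ref{necesaria}, namely to probe the Green's function with the constant right-hand side $h \equiv 1$ and to read off the sign constraint from the resulting solution. First I would observe that, under the standing hypothesis that Problem \eqref{im1} is uniquely solvable, Lemma \ref{detcero} guarantees $m+M \neq 0$: indeed $m = -M$ forces $\det A(m,M) = 0$, which contradicts unique solvability. Hence the quantity $1/(m+M)$ is well defined.

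Next I would verify that the constant function $v(t) \equiv \frac{1}{m+M}$ is the (necessarily unique) solution of Problem \eqref{im1} when $h \equiv 1$. A constant clearly belongs to $\Omega^{1}$, satisfies the periodic condition $v(T)=v(-T)$ trivially, and, since $v'(t)=0$ and $v(-t)=v([t])=\frac{1}{m+M}$, it yields
\begin{equation*}
v'(t)+m\,v(-t)+M\,v([t])=(m+M)\cdot\frac{1}{m+M}=1=h(t),
\end{equation*}
so it solves the equation almost everywhere. By uniqueness it is \emph{the} solution.

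Then I would invoke the Green's function representation for this particular $h$: for every $t\in\hat{J}$,
\begin{equation*}
\frac{1}{m+M}=v(t)=\int_{-T}^{T}\overline{H}_{m,M}(t,s)\,\mathrm{d}s.
\end{equation*}
If $\overline{H}_{m,M}>0$ on $\hat{J}\times\hat{J}$, the integral on the right is strictly positive for each fixed $t$, forcing $\frac{1}{m+M}>0$, i.e.\ $m+M>0$; the negative case is identical and yields $m+M<0$. This establishes both necessary conditions.

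The argument is essentially computational and presents no real obstacle once the admissibility of the constant function in $\Omega^{1}$ and the non-vanishing of $m+M$ are secured. The only point that genuinely needs care is precisely the appeal to Lemma \ref{detcero} to exclude the eigenvalue line $m+M=0$, since otherwise the probe solution $1/(m+M)$ would not exist and the integral identity above could not be written at all.
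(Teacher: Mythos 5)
Your proposal is correct and follows essentially the same route as the paper: probe the Green's function with $h\equiv 1$, note that $v\equiv\frac{1}{m+M}$ is the unique solution, and read the sign of $m+M$ from the integral representation. The paper's proof is just a terser version of the same argument, and your additional appeal to Lemma \ref{detcero} to rule out $m+M=0$ is a reasonable way to make explicit what the paper leaves implicit.
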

\begin{proof}
Assuming that Problem \eqref{im1} has a unique solution for any $h \in \mathcal{L}(\hat{J})$. The proof is a direct consequence of the fact that if $h(t)=1$ for all $t \in \hat{J}$ then $v(t)=\frac{1}{m+M}$ is its unique solution.
\end{proof}

Next, we will study the behaviour of the partial derivatives of \( \overline{H}_{m,M} \). In Proposition \ref{cara}, expression \eqref{enova}, we have already obtained the equation that \( \frac{\partial}{\partial t} \overline{H}_{m,M} \) satisfies.


On the other hand, for the partial derivative of \( \overline{H}_{m,M} \) with respect to \( s \), we arrive at the following proposition.
\begin{proposition}
\begin{equation*}
\frac{\partial}{\partial{s}}\overline{H}_{m,M}(t,s)=m\,\overline{H}_{m,M}(t,-s) \textup{ for all }s \in \hat{J}, \, s \notin D_{t}.
\end{equation*}
\label{derivadas}
\end{proposition}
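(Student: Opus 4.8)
The plan is to differentiate the explicit expression \eqref{exparti} for $\overline{H}_{m,M}$ directly with respect to $s$ and to reduce everything to a single symmetry identity for the underlying Green's function $\overline{G}_m$. First I would observe that in \eqref{exparti} the quantity $\int_{-T}^{T}\overline{G}_m(t,r)\alpha_{ij}(r)\,\mathrm{d}r$ depends only on $t$ (through $\overline{G}_m(t,r)$) and on the fixed scalars $\tilde{a}_{ij}$ hidden in $\alpha_{ij}$, but carries no dependence on $s$. Hence, for $s\notin D_{t}$ — where Part $2$ of Proposition \ref{cara} guarantees that $s\mapsto\overline{H}_{m,M}(t,s)$ is of class $\mathcal{C}^{1}$ — the operator $\partial/\partial s$ acts only on the factors $\overline{G}_m(t,s)$ and $\overline{G}_m(j,s)$.

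The heart of the argument, and the step I expect to be the main obstacle, is establishing the auxiliary identity
\begin{equation*}
\frac{\partial}{\partial s}\overline{G}_m(t,s)=m\,\overline{G}_m(t,-s),
\end{equation*}
valid whenever $s\neq\pm t$. I would derive it purely from Proposition \ref{caragbarra}: differentiating the symmetry relation $\overline{G}_m(t,s)=\overline{G}_m(-s,-t)$ (Part $5$) with respect to $s$ and using the chain rule gives $\partial_s\overline{G}_m(t,s)=-\partial_1\overline{G}_m(-s,-t)$, where $\partial_1$ denotes the derivative in the first argument; substituting Part $3$ written as $\partial_1\overline{G}_m(x,y)=-m\,\overline{G}_m(-x,y)$ with $x=-s$ and $y=-t$ turns the right-hand side into $m\,\overline{G}_m(s,-t)$; and a second application of Part $5$ rewrites $\overline{G}_m(s,-t)=\overline{G}_m(t,-s)$, which yields the claim. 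The delicate point is the sign bookkeeping produced by the chain rule combined with the two reflections appearing in Parts $3$ and $5$, so I would track each argument substitution explicitly.

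With this identity in hand, the conclusion is immediate. Applying it to both $\overline{G}_m(t,s)$ and each $\overline{G}_m(j,s)$ in \eqref{exparti} gives
\begin{equation*}
\frac{\partial}{\partial s}\overline{H}_{m,M}(t,s)=m\,\overline{G}_m(t,-s)-M\sum_{j=[-T]}^{[T]}\sum_{i=[-T]}^{[T]}m\,\overline{G}_m(j,-s)\int_{-T}^{T}\overline{G}_m(t,r)\alpha_{ij}(r)\,\mathrm{d}r.
\end{equation*}
Factoring out $m$ and recognising the resulting bracket as the right-hand side of \eqref{exparti} evaluated at the point $(t,-s)$ yields $\frac{\partial}{\partial s}\overline{H}_{m,M}(t,s)=m\,\overline{H}_{m,M}(t,-s)$, which is the assertion. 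Finally I would note that the restriction $s\notin D_{t}$ is precisely what makes every partial derivative above well defined: the singular lines of $s\mapsto\overline{G}_m(t,s)$ are $s=\pm t$, while those of $s\mapsto\overline{G}_m(j,s)$ for $j\in D$ are $s=\pm j\in D$, and both sets are excluded once $s\notin D_{t}=D\cup\{t,-t\}$, so the identity holds throughout the stated domain.
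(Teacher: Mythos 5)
Your proof is correct, and its overall architecture coincides with the paper's: both first establish the auxiliary identity $\frac{\partial}{\partial s}\overline{G}_{m}(t,s)=m\,\overline{G}_{m}(t,-s)$ and then differentiate formula \eqref{exparti} term by term, using that the integrals $\int_{-T}^{T}\overline{G}_{m}(t,r)\alpha_{ij}(r)\,\mathrm{d}r$ carry no $s$-dependence, to factor out $m$ and recognise $\overline{H}_{m,M}(t,-s)$. The one genuine divergence is in how the auxiliary identity is obtained. The paper descends to the second-order periodic problem: it writes $\overline{G}_{m}(t,s)=m\,G_{m}(t,-s)-\frac{\partial}{\partial s}G_{m}(t,s)$ via Proposition \ref{formamim}, invokes Part $5$ of Proposition \ref{propg} to trade $\partial_{s}$ for $\partial_{t}$, and then uses the equation $\frac{\partial^{2}}{\partial t^{2}}G_{m}+m^{2}G_{m}=0$ to close the computation. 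You instead stay entirely at the level of $\overline{G}_{m}$, combining its symmetry $\overline{G}_{m}(t,s)=\overline{G}_{m}(-s,-t)$ (Part $5$ of Proposition \ref{caragbarra}) with its defining equation in the first variable (Part $3$); your sign bookkeeping through the chain rule and the two reflections checks out, and the domain restrictions ($s\neq\pm t$ for Part $3$ applied at $(-s,-t)$, and $s\notin D$ for the terms $\overline{G}_{m}(j,\cdot)$) are exactly covered by $s\notin D_{t}$. Your route is somewhat more self-contained, since it never needs the representation of $\overline{G}_{m}$ through the second-order Green's function $G_{m}$ or the ODE it satisfies; the paper's route is more computational but makes the mechanism (the interplay between the reflection and the factor $m$) visible at the level of the underlying second-order kernel. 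Both are valid proofs of the same lemma, and the remainder of your argument matches the paper's verbatim.
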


\begin{proof}
Throughout the proof, we will consider \( s \in \hat{J} \), \( s \notin D_{t} \).

We will begin by proving that
\begin{equation*}
\frac{\partial}{\partial{s}}\overline{G}_{m}(t,s)=m\,\overline{G}_{m}(t,-s),
\end{equation*}
where \( \overline{G}_{m} \) is the Green's function related to Problem \eqref{mim1} and given by expression \eqref{formulag}. By Proposition \ref{formamim}, \( \overline{G}_{m} \) satisfies that
\begin{equation*}
\overline{G}_{m}(t,s)=m\,G_{m}(t,-s)-\frac{\partial}{\partial{s}}G_{m}(t,s), \forall (t,s) \in \hat{J} \times \hat{J},
\end{equation*}
where \( G_{m} \) is the Green's function related to Problem \eqref{red1}.

Next, using Property 5 of Proposition \ref{propg}, we deduce that
\begin{equation*}
\frac{\partial}{\partial{s}}\overline{G}_{m}(t,s)=-m \frac{\partial}{\partial{s}}G_{m}(t,-s)-\frac{\partial^{2}}{\partial{t}^{2}}G_{m}(t,s).
\end{equation*}
Thus, since \( \frac{\partial^{2}}{\partial t^{2}} G_{m}(t,s) + m^{2} G_{m}(t,s) = 0 \), we have that
\begin{align*}
\frac{\partial}{\partial{s}}\overline{G}_{m}(t,s)&=-m\frac{\partial}{\partial{s}}G_{m}(t,-s)+m^{2}\,G_{m}(t,s) \\
&=m(m\,G_{m}(t,s)-\frac{\partial}{\partial{s}}G_{m}(t,-s))=m\,\overline{G}_{m}(t,-s).
\end{align*}

Now it only remains to use equation \eqref{exparti}:
\begin{align*}
\frac{\partial}{\partial{s}}\overline{H}_{m,M}(t,s)&=\frac{\partial}{\partial{s}}\overline{G}_{m}(t,s)-M \Big[\sum_{j=[-T]}^{j=[T]} \sum_{i=[-T]}^{i=[T]}{\frac{\partial}{\partial{s}}\overline{G}_{m}(j,s)\int_{-T}^{T}{\overline{G}_{m}(t,r)} \alpha_{ij}(r)} \mathrm{d}r\Big] \\
&=m\,\overline{G}_{m}(t,-s)-M \Big[\sum_{j=[-T]}^{j=[T]} \sum_{i=[-T]}^{i=[T]}{m\,\overline{G}_{m}(j,-s)\int_{-T}^{T}{\overline{G}_{m}(t,r)} \alpha_{ij}(r)} \mathrm{d}r\Big] \\
&=m\,\overline{H}_{m,M}(t,-s),
\end{align*}
and the proof is concluded.

\end{proof}

\subsection{Study of the function $\overline{H}_{m,M}$ when $T \in (0,1]$}
First, we will study the case in which $T \in (0,1]$. We will search for the point $(\hat{t},\hat{s})$ where the function $\overline{H}_{m,M}$ reaches its minimum when it is positive.

To begin with, we present the following observation.

\begin{remark}
	For all $T > 0$ and $m \neq \frac{k\pi}{T}$, $k \in \mathbb{Z}$, the unique solution to Problem \eqref{mim1} with $\sigma(t)=1$ for all $t \in \hat{J}$ is the constant solution $v_{1}(t)=\frac{1}{m}$.
	
	In particular, it holds that
	\begin{equation*}
		\frac{1}{m}=v_{1}(t)=\int_{-T}^{T}{\overline{G}_{m}(t,s) \mathrm{d}s}, \quad \forall t \in \hat{J}.
	\end{equation*}

Therefore, taking into account the previous observation and the expression \eqref{final2}, we deduce that the Green's function for Problem \eqref{im1} in the particular case of $T \in (0,1]$ takes the form
\begin{equation}
	\overline{H}_{m,M}(t,s)=\overline{G}_{m}(t,s)-\frac{M}{m+M}\overline{G}_{m}(0,s) \quad (t,s) \in \hat{J} \times \hat{J}.
	\label{exparti2}
\end{equation}

Furthermore, if $T \leq 1$, $t \in \hat{J}$ and $t \neq 0$, we have:
\begin{equation*}
	\overline{H}_{m,M}(t,t^-)-\overline{H}_{m,M}(t,t^+)=\overline{G}_{m}(t,t^-)-\overline{G}_{m}(t,t^+)-\frac{M}{m+M}(\overline{G}_{m}(0,t)-\overline{G}_{m}(0,t))=1.
\end{equation*}
Moreover, if $t=0$ then (see Figure \ref{salto})
\begin{equation*}
	\overline{H}_{m,M}(0,0^+)-\overline{H}_{m,M}(0,0^-)=\overline{G}_{m}(0,0^+)-\overline{G}_{m}(0,0^-)-\frac{M}{m+M}(\overline{G}_{m}(0,0^+)-\overline{G}_{m}(0,0^-))=\frac{m}{m+M}.
\end{equation*}

\begin{figure}[H]
	\begin{center}
		\includegraphics[scale=0.52]{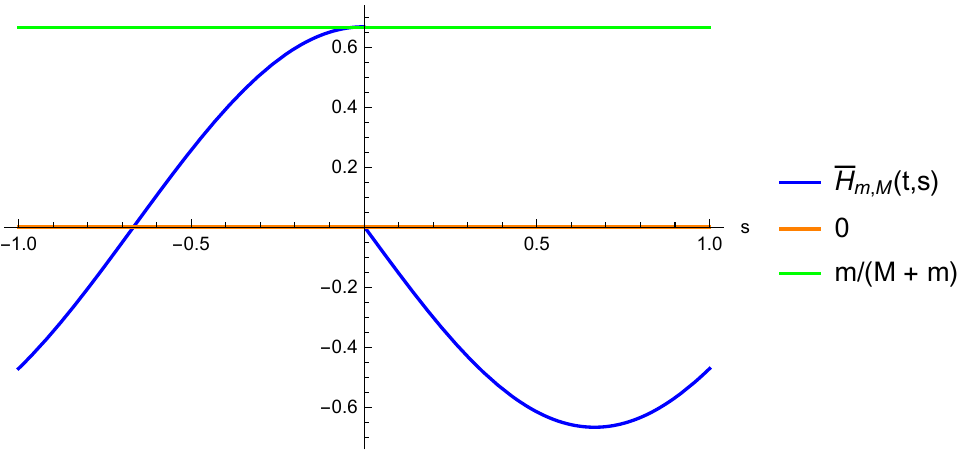}
	\end{center}
	\vspace{-20pt}
	\caption{Representation of the jump of $\overline{H}_{m,M}$ when $t = 0$ for $T = 1$, $m = 2.36$, and $M = 1.19$.}
	\label{salto}
\end{figure}

\end{remark}

\subsubsection*{Minimum of the function $\overline{H}_{m,M}$ when it is positive and $T \in (0,1]$}

Let us begin by proving that the minimum of \( \overline{H}_{m,M} \) occurs at \( t = s^- \) when we assume that \( \overline{H}_{m,M} \) is positive and \( m \in (-\pi/4T, \pi/4T) \). To do this, we will use the derivative of \( \overline{H}_{m,M}(t,s) \) with respect to \( t \). We have that
\begin{align*}
\frac{\partial}{\partial{t}}\overline{H}_{m,M}(t,s)&=-m\,\overline{H}_{m,M}(-t,s)-M\,\overline{H}_{m,M}(0,s)\\
&=-m\,\left(\overline{G}_{m}(-t,s)-\frac{M}{m+M}\overline{G}_{m}(0,s) \right)-M \left(\overline{G}_{m}(0,s)-\frac{M}{m+M}\overline{G}_{m}(0,s) \right)\\
&=-m\, \overline{G}_{m}(-t,s) \textup{ for all }t \in \hat{J}, \, t \neq s \textup{ and } t \neq -s.
\end{align*}

Applying Theorem \ref{positividadg}, we know that \( \overline{G}_{m} \) is positive on $\hat{J} \times \hat{J}$ if and only if \( m \in (0, \pi/4T) \) and negative when \( m \in (-\pi/4T, 0) \). From this, we deduce that \( \frac{\partial}{\partial t} \overline{H}_{m,M}(t,s) < 0 \) where it is defined (for such values of $m$ and $M$).

Moreover, \( \overline{H}_{m,M}(t,s) \) satisfies the following conditions for all $M \in (0, \pi/4T)$:
\begin{enumerate}
\item \( \overline{H}_{m,M}(\cdot, s) \) is continuous on \( \hat{J} \setminus \{s\} \),
\item \( \frac{\partial}{\partial t} \overline{H}_{m,M}(t, s) < 0 \) for \( t \neq s \), \( t \neq -s \), and \( t \neq 0 \),
\item \( \overline{H}_{m,M}(-T, s) = \overline{H}_{m,M}(T, s) \) for all \( s \in (-T, T) \),
\item \( \overline{H}_{m,M}(s^{+}, s) = \overline{H}_{m,M}(s^{-}, s) + 1 \) for all \( s \in (-T, T) \setminus \{0\} \).
\end{enumerate}

Therefore, for any $s \in (-T,T)$ fixed, the minimum on \( \hat{J} \) is attained at \( t = s^{-} \).

Since we are interested in finding the minimum of the function \( \overline{H}_{m,M} \) on \( \hat{J} \), we define the following function \( \overline{q}: \hat{J} \rightarrow \mathbb{R} \) (see Figure \ref{figuq}).
\begin{equation}
\overline{q}(s)=
\left\{
\begin{array}{lll}
\overline{H}_{m,M}(s^-,s), & \mbox{\rm if}  & s \neq -T, \\
\overline{H}_{m,M}(-T,-T^+), & \mbox{\rm if}  & s=-T. \\
\end{array}
\right.
\label{defq}
\end{equation}

\begin{figure}[H]
	\begin{center}
	\includegraphics[scale=0.5]{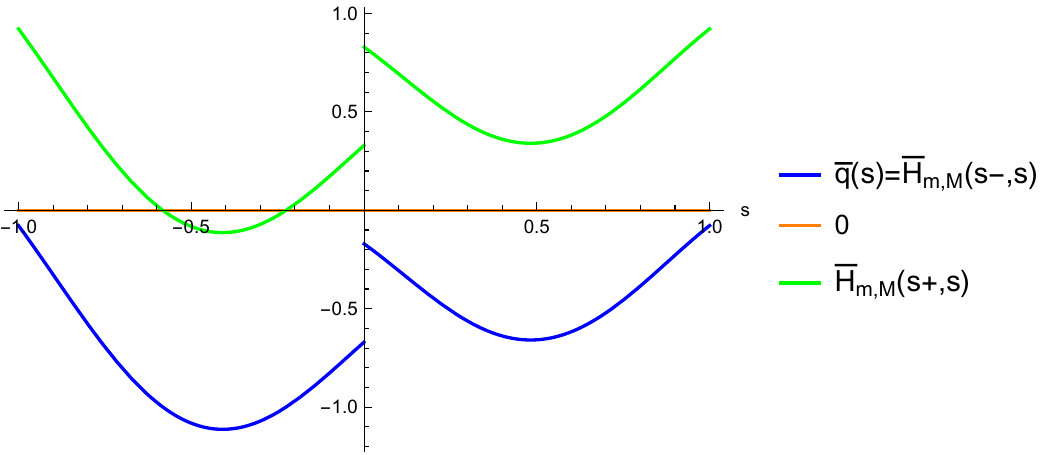}
	\end{center}
	\vspace{-20pt}
    \caption{Representation of \( \overline{H}_{m,M}(s^+, s) \) and \( \overline{H}_{m,M}(s^-, s) \) for \( T = 1 \) and \( m = M=-1.88 \).}
	\label{figuq}
\end{figure}

From now on, we will work only with the blue branch in Figure \ref{figuq}, that is, \( \overline{q}(s) \). We will start with the hypothesis that \( \overline{q}(s) \geq 0 \) for all \( s \in \hat{J} \) and will attempt to determine at which value of \( s \in \hat{J} \) the function \( \overline{q} \) attains its minimum.

We begin by calculating the first and second derivatives:
\begin{align*}
\overline{q}'(s)=\frac{\mathrm{d}}{\mathrm{d}s}\overline{H}_{m,M}(s^-,s)&=\frac{\partial}{\partial{t}}\overline{H}_{m,M}(s^-,s)+\frac{\partial}{\partial{s}}\overline{H}_{m,M}(s^-,s) \\
&=-m\,\overline{H}_{m,M}(-s,s)-M\,\overline{H}_{m,M}(0,s)+m\,\overline{H}_{m,M}(s,-s)
\end{align*}
and
\begin{align*}
\overline{q}''(s)&=\frac{\mathrm{d}^{2}}{\mathrm{d}s^{2}}\overline{H}_{m,M}(s^-,s)=-m\left(-\frac{\partial}{\partial{t}}\overline{H}_{m,M}((-s)^+,s)+\frac{\partial}{\partial{s}}\overline{H}_{m,M}((-s)^+,s) \right) \\
& \quad -M \left(\frac{\partial}{\partial{s}}\overline{H}_{m,M}(0,s)\right) +m \left(\frac{\partial}{\partial{t}}\overline{H}_{m,M}(s^-,-s)-\frac{\partial}{\partial{s}}\overline{H}_{m,M}(s^-,-s)\right) \\
&=-m\left(m\,\overline{H}_{m,M}(s^-,s)+M\,\overline{H}_{m,M}(0,s) +m\,\overline{H}_{m,M}((-s)^+,-s)\right) -Mm\,\overline{H}_{m,M}(0,-s) \\ 
& \quad +m\left(-m\,\overline{H}_{m,M}((-s)^+,-s)-M\,\overline{H}_{m,M}(0,-s)-m\,\overline{H}_{m,M}(s^-,s)\right) \\
&=-2\,m^{2}\left(\overline{H}_{m,M}(s^-,s)+\overline{H}_{m,M}((-s)^+,-s)\right)-m\,M\left(\overline{H}_{m,M}(0,s)+2\overline{H}_{m,M}(0,-s)\right).
\end{align*}

Recall that, for these values of \( T \), the function \( \overline{H}_{m,M} \) is given by the expression \eqref{exparti2}.

Therefore, we can rewrite the first and second derivatives of \( \overline{q}(s) \) as follows:
\begin{align*}
\overline{q}'(s)&=m \left(\overline{G}_{m}(s^-,s)-\frac{M}{m+M}\overline{G}_{m}(0,-s)-\overline{G}_{m}(-s,s)+\frac{M}{m+M}\overline{G}_{m}(0,s) \right) \\
& \quad -M \left(\overline{G}_{m}(0,s)-\frac{M}{m+M}\overline{G}_{m}(0,s) \right)
=m \left(\overline{G}_{m}(s,-s)-\overline{G}_{m}(-s,s)-\frac{M}{m+M}\overline{G}_{m}(0,-s) \right) \\
&=m\left(\overline{H}_{m,M}(s,-s)-\overline{G}_{m}(-s,s)\right)
\end{align*}
and, using Part 5 of Proposition \ref{caragbarra}, we deduce that
\begin{align*}
\overline{q}''(s) &=  -2\,m^2 \left(\overline{H}_{m,M}(s^-,s) + \overline{H}_{m,M}((-s)^+,-s)\right) - m\,M \left(\overline{H}_{m,M}(0,s) + 2\,\overline{H}_{m,M}(0,-s)\right) \\
       &= -2\,m^2 \left( \overline{G}_{m}(s^-,s) - \frac{M}{m+M} \overline{G}_{m}(0,s) + \overline{G}_{m}(s,-s) - \frac{M}{m+M} \overline{G}_{m}(0,-s) \right) \\
       & \quad - m\,M \left( \overline{G}_{m}(0,s) - \frac{M}{m+M} \overline{G}_{m}(0,s) + 2\,\overline{G}_{m}(0,-s) - \frac{2M}{m+M} \overline{G}_{m}(0,-s) \right) \\
       &= -4\,m^2 \overline{G}_{m}(s^-,s) + \frac{m^2 M}{m+M} \overline{G}_{m}(0,s) = -m^2 \left( 4\, \overline{G}_{m}(s^-,s) - \frac{M}{m+M} \overline{G}_{m}(0,s) \right) \\
       &= -m^2 (3\, \overline{G}_{m}(s^-,s) + \overline{H}_{m,M}(s^-,s)).
\end{align*}

Using previous equalities, we are in position to prove the following proposition:
\begin{proposition}
The function \( \overline{q}: \hat{J} \rightarrow \mathbb{R} \) satisfies the following properties:
\begin{enumerate}
\item \( \overline{q} \) is well-defined and \( \mathcal{C}^{1} \) on \( \hat{J} \setminus \{0\} \),
\item \( \overline{q}(0^+) - \overline{q}(0^-) = \frac{M}{m+M} \),
\item \( \overline{q}(T) = \overline{q}(-T) \),
\item \( \overline{q}'(T) = \overline{q}'(-T) \).
\end{enumerate}
\label{carq}
\end{proposition}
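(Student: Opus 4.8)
The plan is to establish the four assertions separately, in every case relying on the closed form \eqref{exparti2}, which for $T\le 1$ reads $\overline{H}_{m,M}(t,s)=\overline{G}_{m}(t,s)-\frac{M}{m+M}\overline{G}_{m}(0,s)$, together with the expressions for $\overline{q}'$ and $\overline{q}''$ already derived just above the statement and the structural properties of $\overline{G}_{m}$ and $\overline{H}_{m,M}$ collected in Propositions \ref{caragbarra}, \ref{cara} and \ref{derivadas}. Substituting \eqref{exparti2} into \eqref{defq} gives $\overline{q}(s)=\overline{G}_{m}(s^-,s)-\frac{M}{m+M}\overline{G}_{m}(0,s)$ for $s\neq -T$, while $\overline{q}(-T)$ is prescribed by the second branch of \eqref{defq}.

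For Part~1 I would first note well-definedness: the lateral limits $\overline{G}_m(s^-,s)$ exist and are finite by Proposition \ref{caragbarra}(2), and $\overline{G}_m(0,\cdot)$ is defined and continuous off its single discontinuity at the origin, so $\overline{q}$ is well-defined on $\hat{J}\setminus\{0\}$. For smoothness it suffices to invoke the already computed identity $\overline{q}'(s)=m\big(\overline{H}_{m,M}(s,-s)-\overline{G}_m(-s,s)\big)$: for $s\neq 0$ the arguments appearing on the right avoid the diagonal, so by Proposition \ref{caragbarra}(1) the right-hand side is continuous (indeed, since $\overline{q}''$ was also obtained, $\mathcal{C}^1$) on each of $(-T,0)$ and $(0,T)$, giving $\overline{q}\in\mathcal{C}^1(\hat{J}\setminus\{0\})$. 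For Part~2 I would split the jump through \eqref{exparti2}: the term $s\mapsto\overline{G}_m(s^-,s)$ is continuous at $s=0$, because along the path $(s^-,s)$ the anti-diagonal $t=-s$ is crossed only at $s=0$, and $\overline{G}_m$ is continuous across that anti-diagonal (Proposition \ref{caragbarra}(1)--(2) attach a jump only to $t=s$); hence the whole jump comes from the second summand, and with $\overline{G}_m(0,0^-)-\overline{G}_m(0,0^+)=1$ from Proposition \ref{caragbarra}(2) one gets
\[
\overline{q}(0^+)-\overline{q}(0^-)=-\frac{M}{m+M}\big(\overline{G}_m(0,0^+)-\overline{G}_m(0,0^-)\big)=\frac{M}{m+M}.
\]

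For Part~3 I would write $\overline{q}(T)=\overline{G}_m(T^-,T)-\frac{M}{m+M}\overline{G}_m(0,T)$ and $\overline{q}(-T)=\overline{G}_m(-T,(-T)^+)-\frac{M}{m+M}\overline{G}_m(0,-T)$. The second terms coincide because $\overline{G}_m(0,T)=\overline{G}_m(0,-T)$ by the $s$-periodicity of Proposition \ref{caragbarra}(7) (here $t=0$ is interior). For the first terms I would apply the reflection symmetry $\overline{G}_m(t,s)=\overline{G}_m(-s,-t)$ of Proposition \ref{caragbarra}(5) with $t=T^-$, $s=T$, which gives $\overline{G}_m(T^-,T)=\overline{G}_m(-T,(-T)^+)$ since $-(T^-)=(-T)^+$; hence $\overline{q}(T)=\overline{q}(-T)$. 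This route deliberately uses $T^-$, so it never touches the problematic diagonal value at the corner.

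For Part~4 the natural route is again $\overline{q}'(s)=m\big(\overline{H}_{m,M}(s,-s)-\overline{G}_m(-s,s)\big)$, now at $s=\pm T$, which reduces the claim to the two corner identities $\overline{H}_{m,M}(T,-T)=\overline{H}_{m,M}(-T,T)$ and $\overline{G}_m(-T,T)=\overline{G}_m(T,-T)$. Each should follow from periodicity in \emph{both} variables: Proposition \ref{caragbarra}(7) gives $\overline{G}_m(-T,T)=\overline{G}_m(-T,-T)$ and Proposition \ref{caragbarra}(4) gives $\overline{G}_m(T,-T)=\overline{G}_m(-T,-T)$, while Proposition \ref{cara}(6) gives $\overline{H}_{m,M}(-T,T)=\overline{H}_{m,M}(-T,-T)$ and Proposition \ref{cara}(5) gives $\overline{H}_{m,M}(T,-T)=\overline{H}_{m,M}(-T,-T)$; substituting yields $\overline{q}'(T)=\overline{q}'(-T)$. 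I expect this to be the main obstacle: Propositions \ref{caragbarra}(4)--(7) and \ref{cara}(5)--(6) are stated for \emph{interior} arguments, whereas Part~4 forces their use at the corners $(\pm T,\mp T)$, which lie simultaneously on the anti-diagonal and on the boundary. I would therefore have to justify that the periodic relations persist there, by taking the one-sided limits along $(s,-s)$ as $s\to T^-$ and $s\to(-T)^+$ and checking, via continuity of $\overline{G}_m$ and $\overline{H}_{m,M}$ across the anti-diagonal together with the periodic boundary conditions, that the two limits agree; confirming that the reduced formula \eqref{exparti2} and the $\overline{q}'$ identity remain valid in this corner limit is the delicate point, the other three properties being comparatively routine.
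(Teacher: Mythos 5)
Your proof is correct and follows essentially the same route as the paper: both substitute the reduced expression \eqref{exparti2}, attribute the jump in Part 2 entirely to the unit jump of $\overline{G}_{m}(0,\cdot)$ at the origin, and obtain Parts 3 and 4 from the reflection and periodicity properties of Proposition \ref{caragbarra} (the paper substitutes \eqref{exparti2} into $\overline{q}'(\pm T)$ before invoking them, whereas you use the periodicity of $\overline{H}_{m,M}$ from Proposition \ref{cara} directly --- a cosmetic difference). The corner subtlety you flag for Part 4 is genuine, but the paper's own proof applies those interior-stated properties at $(\pm T,\mp T)$ without comment, so you are if anything more careful than the source.
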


\begin{proof}
Let us see the proof of the different parts of the result.
\begin{enumerate}
\item The first part is directly deduced from the expression of the second derivative \( \overline{q}''(s) \) for all \( s \in \hat{J} \setminus \{0\} \).

\item Part 2 is proved as follows (see Figure \ref{saltoq}):
\begin{equation*}
\begin{aligned}
\overline{q}(0^+)-\overline{q}(0^-)&=\overline{H}_{m,M}((0^-)^+,0^+)-\overline{H}_{m,M}((0^-)^-,0^-)\\
&=\overline{G}_{m}(0,0^+)-\frac{M}{m+M}\overline{G}_{m}(0,0^+)-\overline{G}_{m}((0^-)^-,0^-)+\frac{M}{m+M}\overline{G}_{m}(0,0^-)\\
&=\overline{G}_{m}(0,0^+)-\overline{G}_{m}(0,0^+)+\frac{M}{m+M}\left(\overline{G}_{m}(0,0^-)-\overline{G}_{m}(0,0^+)\right)=\frac{M}{m+M}.
\end{aligned}
\end{equation*}

\begin{figure}[H]
	\begin{center}
	\includegraphics[scale=0.55]{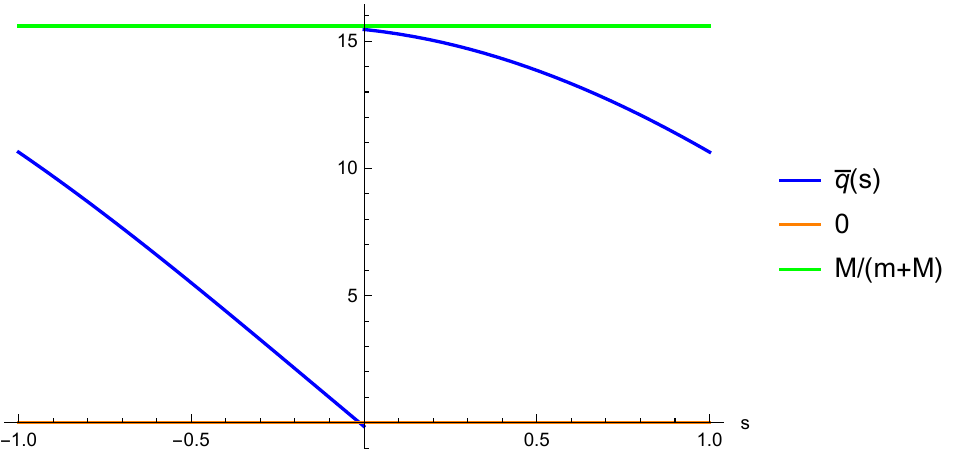}
	\end{center}
	\vspace{-20pt}
    \caption{Representation of the jump in \( \overline{q}(s) \) at \( s = 0 \) for \( m = -0.73 \), \( M = 0.78 \), and \( T = 1 \).}
	\label{saltoq}
\end{figure}

\item Part $3$.

Using properties $4$ and $5$ of Proposition \ref{caragbarra}, we obtain that:
\begin{align*}
\overline{q}(-T)&=\overline{H}_{m,M}((-T)^-,-T)= \overline{G}_{m}((-T)^-,-T)-\frac{M}{m+M}\overline{G}_{m}(0,-T)\\
&=\overline{G}_{m}(T^-,T)-\frac{M}{m+M}\,\overline{G}_{m}(0,T)=\overline{H}_{m,M}(T^-,T)=\overline{q}(T).
\end{align*}

\item Part $4$.

On the one hand,
\begin{align*}
\overline{q}'(T)&=m\left(\overline{H}_{m,M}(T,-T)-\overline{G}_{m}(-T,T)\right) \\
&=m\left(\overline{G}_{m}(T,-T)-\frac{M}{m+M}\overline{G}(0,-T)-\overline{G}_{m}(T,-T)\right)=\frac{-m\,M}{m+M}\overline{G}_{m}(0,-T).
\end{align*}
Now, using Property $5$ of Proposition \ref{caragbarra},
\begin{equation*}
\overline{q}'(T)=\frac{-m\,M}{m+M}\overline{G}_{m}(T,0).
\end{equation*}
On the other hand,
\begin{equation*}
\overline{q}'(-T)=m\left(\overline{G}_{m}(-T,T)-\frac{M}{m+M}\overline{G}(0,T)-\overline{G}_{m}(-T,T)\right)=\frac{-m\,M}{m+M}\overline{G}_{m}(0,T).
\end{equation*}
Again, using Property $5$ of Proposition \ref{caragbarra},
\begin{equation*}
\overline{q}'(-T)=\frac{-m\,M}{m+M}\overline{G}_{m}(-T,0).
\end{equation*}
Finally, taking into account Property $4$ of Proposition \ref{caragbarra}, we conclude that
\begin{equation*}
\overline{q}'(T)=\overline{q}'(-T).
\end{equation*}

\end{enumerate}
\end{proof}

Based on the previous expressions for \( \overline{q}'(s) \) and \( \overline{q}''(s) \) and Proposition \ref{carq}, we are in a position to prove the following result.

\begin{proposition}
Assume that \( \overline{H}_{m,M}(s^-,s) \geq 0 \) for all $s \in \hat{J}$. If \( mT \in (-\frac{\pi}{4}, \frac{\pi}{4}) \) and \( m + M > 0 \) (which must be satisfied if \( \overline{H}_{m,M} \) is positive), then the function \( \overline{q} \) attains its minimum at:
\begin{equation}
\left\{
\begin{array}{lll}
\overline{q}(0^-)=\lim_{s \rightarrow 0^{+}}{\overline{q}(s)} & \text{if} & M > 0, \\ 
\overline{q}(0^+)=\lim_{s \rightarrow 0^{-}}{\overline{q}(s)} & \text{if} & M < 0.
\end{array}
\right.
\label{defc}
\end{equation}
\label{minimo}
\end{proposition}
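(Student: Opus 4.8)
The plan is to pin down the global minimum of $\overline{q}$ on $\hat{J}$ by reading off the curvature of $\overline{q}$ from the second–derivative formula $\overline{q}''(s)=-m^{2}\bigl(3\,\overline{G}_{m}(s^{-},s)+\overline{q}(s)\bigr)$ derived just above, and then gluing the two branches of $\overline{q}$ with the data collected in Proposition \ref{carq}. The first preliminary step is to fix the sign of $\overline{G}_{m}$: since $mT\in(-\tfrac{\pi}{4},\tfrac{\pi}{4})$, Theorem \ref{positividadg} guarantees that $\overline{G}_{m}$ is strictly positive on $\hat{J}\times\hat{J}$ when $m>0$ and strictly negative when $m<0$, the boundary value $m=0$ being recovered afterwards by continuity in the parameters. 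This dichotomy is what forces me to split the argument according to the sign of $m$.

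Consider first the case $m>0$. Here $\overline{G}_{m}>0$, and using the standing hypothesis $\overline{q}(s)=\overline{H}_{m,M}(s^{-},s)\ge 0$ in the formula for $\overline{q}''$ gives $3\,\overline{G}_{m}(s^{-},s)+\overline{q}(s)>0$, hence $\overline{q}''<0$ on each of $(-T,0)$ and $(0,T)$; thus $\overline{q}$ is strictly concave on both subintervals. By Proposition \ref{carq}, parts $1$, $3$ and $4$, the function $\overline{q}$ is $\mathcal{C}^{1}$ on $\hat{J}\setminus\{0\}$ and satisfies $\overline{q}(T)=\overline{q}(-T)$ and $\overline{q}'(T)=\overline{q}'(-T)$, so after identifying $T$ with $-T$ it becomes a single $\mathcal{C}^{1}$ strictly concave function on an interval of length $2T$ whose two endpoints are the one–sided traces $\overline{q}(0^{+})$ and $\overline{q}(0^{-})$. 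A strictly concave function attains its minimum only at the endpoints, so $\min_{\hat{J}}\overline{q}=\min\{\overline{q}(0^{+}),\overline{q}(0^{-})\}$. Finally, Proposition \ref{carq}, part $2$, gives $\overline{q}(0^{+})-\overline{q}(0^{-})=\tfrac{M}{m+M}$, whose sign equals that of $M$ because $m+M>0$; this selects $\overline{q}(0^{-})$ as the minimum when $M>0$ and $\overline{q}(0^{+})$ when $M<0$, which is exactly the asserted conclusion.

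The case $m<0$ (which under $m+M>0$ forces $M>0$) is the step I expect to be the main obstacle. Now $\overline{G}_{m}<0$, so the two summands in $3\,\overline{G}_{m}(s^{-},s)+\overline{q}(s)$ have opposite signs, $\overline{q}''$ is no longer sign–definite, and the concavity argument breaks down: one checks that $\overline{q}$ is concave only where $\overline{q}(s)>3\lvert\overline{G}_{m}(s^{-},s)\rvert$ and becomes convex near its smallest values, so the function is a ``valley'' rather than a hump. The route I would take is to locate this valley directly from the sign of the first derivative $\overline{q}'(s)=m\bigl(\overline{H}_{m,M}(s,-s)-\overline{G}_{m}(-s,s)\bigr)$, using the explicit trigonometric form \eqref{formulag} of $\overline{G}_{m}$ together with the representation \eqref{exparti2} of $\overline{H}_{m,M}$ valid for $T\le 1$: the goal is to show $\overline{q}'<0$ on $(-T,0)$ and $\overline{q}'>0$ on $(0,T)$, so that $\overline{q}$ decreases up to $0^{-}$ and increases from $0^{+}$, placing the minimum again at the jump point $s=0$; a first sanity check already gives $\overline{q}'(0^{+})>0$. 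As an alternative I would invoke the symmetry Lemma \ref{simetriah}, which combined with the jump of Proposition \ref{cara}, part $4$, yields $\overline{q}_{m,M}(s)=-\overline{q}_{-m,-M}(-s)-1$ and reduces the problem to locating the maximum of $\overline{q}_{-m,-M}$ in the corresponding sign regime. In either approach the hard point is controlling the interior critical behaviour where $\overline{q}''$ changes sign; once the V–shape (or the absence of an interior local minimum) is established, the same jump computation $\overline{q}(0^{+})-\overline{q}(0^{-})=\tfrac{M}{m+M}>0$ from Proposition \ref{carq} selects $\overline{q}(0^{-})$ as the minimum, completing the statement, with the degenerate values $m=0$ and $M=0$ handled by continuity.
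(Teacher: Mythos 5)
Your first case ($mT\in(0,\tfrac{\pi}{4})$) is correct and follows the paper's own route: strict concavity of $\overline{q}$ on each branch from $\overline{q}''(s)=-m^{2}\bigl(3\,\overline{G}_{m}(s^{-},s)+\overline{q}(s)\bigr)<0$, the periodic gluing supplied by parts $3$ and $4$ of Proposition \ref{carq}, and the jump $\overline{q}(0^{+})-\overline{q}(0^{-})=\tfrac{M}{m+M}$ to decide between $\overline{q}(0^{-})$ and $\overline{q}(0^{+})$.

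The case $m<0$ is a genuine gap, and your picture of $\overline{q}$ there is wrong. You plan to establish a ``V--shape'' ($\overline{q}'<0$ on $(-T,0)$, $\overline{q}'>0$ on $(0,T)$), flag the interior critical behaviour as the hard unresolved point, and offer the sanity check $\overline{q}'(0^{+})>0$. In fact this case is immediate from the very formula you quote: for $mT\in(-\tfrac{\pi}{4},0)$, Theorem \ref{positividadg} gives $\overline{G}_{m}<0$ on all of $\hat{J}\times\hat{J}$, and the nonnegativity of $\overline{H}_{m,M}$ gives $\overline{H}_{m,M}(s^{-},-s)\ge 0$, so $\overline{H}_{m,M}(s^{-},-s)-\overline{G}_{m}(-s,s)>0$ and, since $m<0$, $\overline{q}'(s)\le 0$ wherever it is defined. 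In particular $\overline{q}'(0^{+})\le 0$, so your sanity check is incorrect and the valley you are trying to control does not exist: $\overline{q}$ is non\-increasing on each branch, its infimum on $[-T,0)$ is $\overline{q}(0^{-})$, its infimum on $(0,T]$ is $\overline{q}(T)=\overline{q}(-T)\ge\overline{q}(0^{-})$, and since $m+M>0$ with $m<0$ forces $M>0$, the jump formula confirms that the minimum is $\overline{q}(0^{-})$. No information about $\overline{q}''$ or interior critical points is needed. Your fallback via Lemma \ref{simetriah} is likewise only sketched and would face the same sign ambiguity in $3\,\overline{G}_{-m}+\overline{q}_{-m,-M}$, so as written the second half of the proposition remains unproven in your proposal.
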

\begin{proof}
%
We divide the proof into two parts. First, we consider the case where \( mT \in (0, \pi/4) \), and subsequently, we will study what happens for \( mT \in (-\pi/4, 0) \).
\begin{enumerate}
\item Case $mT \in (0, \pi/4)$. 
We know that
\begin{equation*}
\overline{q}''(s) = -m^2 \left(3 \, \overline{G}_{m}(s^-,s) + \overline{H}_{m,M}(s^-,s)\right)
\end{equation*}
and, moreover \( \overline{G}_{m} > 0 \). Since we assume that we are dealing with pairs of values \( (m, M) \in \mathbb{R} \times \mathbb{R} \) such that \( \overline{H}_{m,M} > 0 \), then \( \overline{q}''(s) < 0 \). This means that the function \( \overline{q}(s)  \) is concave. 

As a concave function that is continuous for all \( s \in \hat{J} \) except for \( s = 0 \), the minimum can only be attained at \( s = 0 ^+\), $s=0^-$, \( s = -T \), or \( s = T \). Furthermore, in light of parts $3$ and $4$ of Proposition \ref{carq}, we directly infer that the infimum is taken at \( s = 0 \).


Now, function \( \overline{q} \) has a jump at \( s = 0 \). Taking into account Part 2 of Proposition \ref{carq}, we deduce that the infimum of the function \( \overline{q}(s) \) for \( s \in \hat{J} \) will be
\begin{equation*}
\left\{
\begin{array}{lll}
\overline{q}(0^-)=\lim_{s \rightarrow 0^{-}}{\overline{q}(s)} & \text{if} & M > 0, \\ 
\overline{q}(0^+)=\lim_{s \rightarrow 0^{+}}{\overline{q}(s)} & \text{if} & M < 0.
\end{array}
\right.
\label{saltoraro}
\end{equation*}

\item Case $mT \in (-\pi/4,0)$.
In this situation, we start from the fact that
\begin{equation*}
\overline{q}'(s)=m\left(\overline{H}_{m,M}(s^-,-s)-\overline{G}_{m}(-s,s)\right).
\end{equation*}
By Theorem \ref{positividadg}, we know that \( \overline{G}_{m} \) is strictly negative on \( \hat{J} \times \hat{J} \) if and only if $m \in (-\frac{\pi}{4T},0)$. Furthermore, by hypothesis, we have that \( \overline{H}_{m,M} \) is positive. Therefore, it is obvious that \( \overline{H}_{m,M}(s^-, -s) - \overline{G}_{m}(-s, s) \geq 0 \). Additionally, since \( m < 0 \), it follows that \( \overline{q}'(s) \leq 0 \) for all \( s \in \hat{J} \), $s \neq 0$.

Since the derivative is negative, the infimum can only be attained at \( s = T \), \( s = 0^+ \) or $s=0^-$ (the endpoint of the interval or the unique point where the function \( \overline{q} \) is discontinuous). However, by Part 3 of Proposition \ref{carq}, we discard the case where \( s = T \) and we conclude that \( s = 0 \).

Again, we must consider that there is a jump at \( s = 0 \). Since by hypothesis \( m + M > 0 \) and \( m < 0 \), it follows that \( M > 0 \). Taking this into account and considering Part 2 of Proposition \ref{carq}, we deduce that the minimum of \( \overline{q}(s) \) is given by
\begin{equation*}
\overline{q}(0^-)=\lim_{s \rightarrow 0^{-}}{\overline{q}(s)},
\end{equation*}
and we conclude the proof of the proposition.
\end{enumerate}
\end{proof}

\subsubsection{Constant sign region when $T \in (0,1]$ using the fixed-point method of Section \ref{metodofixo}}
As a consequence of previous results, we are able to delineate the region where the Green's function \( \overline{H}_{m,M}(t,s) \) is positive. 

We will begin by proving a series of lemmas that will allow us to reach the main result of this section.
\begin{lemma}
If \( mT > \frac{\pi}{4} \) or \( mT < \frac{-\pi}{4} \), then $\overline{H}_{m,M}$ changes its sign on \( \hat{J} \times \hat{J} \).
\label{lemapi}
\end{lemma}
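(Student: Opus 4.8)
The plan is to prove the sign change by restricting $\overline{H}_{m,M}$ to the single vertical slice $t=0$, where formula \eqref{exparti2} collapses to a particularly simple expression. Setting $t=0$ in \eqref{exparti2} gives
\[
\overline{H}_{m,M}(0,s)=\overline{G}_{m}(0,s)-\frac{M}{m+M}\,\overline{G}_{m}(0,s)=\frac{m}{m+M}\,\overline{G}_{m}(0,s),\qquad s\in(0,T).
\]
Since $mT>\pi/4$ (or $mT<-\pi/4$) forces $m\neq 0$, and $m+M\neq 0$ by the standing uniqueness hypothesis of Proposition \ref{prop59}, the factor $m/(m+M)$ is a nonzero constant. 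Hence it suffices to show that $s\mapsto\overline{G}_{m}(0,s)$ changes sign on $(0,T)$; then $\overline{H}_{m,M}(0,\cdot)$ changes sign there, and a fortiori $\overline{H}_{m,M}$ changes sign on $\hat{J}\times\hat{J}$.

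Next I would evaluate the explicit formula \eqref{formulag} for $\overline{G}_{m}$ along this slice. For $s\in(0,T)$ the relevant branch is $-s\le t<s$, which at $t=0$ yields
\[
\overline{G}_{m}(0,s)=\frac{\cos m(T-s)-\sin m(T-s)}{2m\sin(mT)}=\frac{1}{\sqrt{2}\,m\sin(mT)}\,\cos\!\Big(m(T-s)+\tfrac{\pi}{4}\Big).
\]
The prefactor is a fixed nonzero constant because $\sin(mT)\neq 0$ under the uniqueness hypothesis $m\neq k\pi/T$. As $s$ runs over $(0,T)$ with $m>0$, the argument $m(T-s)+\pi/4$ sweeps the interval $(\pi/4,\,mT+\pi/4)$ monotonically. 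This interval contains $\pi/2$ exactly when $mT+\pi/4>\pi/2$, i.e. precisely when $mT>\pi/4$; at such a crossing the cosine changes sign, so $\overline{G}_{m}(0,\cdot)$, and therefore $\overline{H}_{m,M}(0,\cdot)$, changes sign. This settles the case $mT>\pi/4$.

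For the case $mT<-\pi/4$ I would invoke the symmetry of Lemma \ref{simetriah}, namely $\overline{H}_{m,M}(t,s)=-\overline{H}_{-m,-M}(-t,-s)$: since $(-m)T>\pi/4$, the previous paragraph shows $\overline{H}_{-m,-M}$ changes sign, and the identity transfers this to $\overline{H}_{m,M}$. The only point demanding care---the step I regard as the main obstacle---is to confirm that the sign change of $\overline{G}_{m}(0,\cdot)$ persists across the \emph{entire} range $mT>\pi/4$ rather than merely for $mT$ close to $\pi/4$. This is exactly what the interval argument secures: the swept interval $(\pi/4,mT+\pi/4)$ has its left endpoint fixed at $\pi/4<\pi/2$ and its right endpoint strictly beyond $\pi/2$ for every $mT>\pi/4$, so it always straddles a zero of the cosine, independently of how large $mT$ is or of the sign of the constant prefactor $1/(\sqrt{2}\,m\sin(mT))$.
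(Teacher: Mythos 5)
Your proposal is correct and follows essentially the same route as the paper: both restrict to the slice $t=0$, use \eqref{exparti2} to get $\overline{H}_{m,M}(0,s)=\frac{m}{m+M}\,\overline{G}_{m}(0,s)$, and reduce the claim to the sign change of $\overline{G}_{m}(0,\cdot)$, which the paper simply reads off from \eqref{formulag} and you verify explicitly via the identity $\cos\theta-\sin\theta=\sqrt{2}\cos(\theta+\pi/4)$ (handling $mT<-\pi/4$ by the symmetry of Lemma \ref{simetriah} instead of a parallel computation). The only slip is a mislabelled branch of \eqref{formulag} --- for $t=0$ and $s\in(0,T)$ the applicable case is $-s\le t<s$, not $-t\le s<t$ --- but the expression you actually use, $\cos m(T-s)-\sin m(T-s)$, is the correct one.
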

\begin{proof}
By Theorem \ref{positividadg}, we know that if \( mT > \frac{\pi}{4} \) or \( mT < \frac{\pi}{4} \), then \( \overline{G}_{m} \) changes sign on \( \hat{J} \times \hat{J} \), and for \( mT = \frac{\pi}{4} \) or \( mT = -\frac{\pi}{4} \), it vanishes at \( (0,0) \).

Additionally, from equation \eqref{exparti2}, it is easy to deduce that
\begin{equation*}
\overline{H}_{m,M}(0,s)=\overline{G}_{m}(0,s)-\frac{M}{m+M}\overline{G}_{m}(0,s)=\frac{m}{m+M}\overline{G}_{m}(0,s) \textup{ for all }s \neq 0.
\end{equation*}
With this, it is clear that \( \overline{H}_{\pi/4T, M}(0, 0^+) = \overline{H}_{-\pi/4T, M}(0, 0^-) = 0 \) for all \( M \neq -m \) for which \( \overline{H}_{m,M} \) exists. 
Observing the expression for \( \overline{G}_{m}(t,s) \) given by \eqref{formulag}, we see that \( \overline{G}_{m}(0,s) \) changes sign on \( \hat{J} \) whenever either \( m > \pi/4T \) or \( m < \pi/4T \) and the same happens for $\overline{H}_{m,M}$ for any $M \in \mathbb{R}$.
\end{proof}

Let's now analyze the case where \( mT \in (-\pi/4, \pi/4) \). If for each \( m \in (-\pi/4T, \pi/4T) \), there exists a \( \overline{M} \) such that \( \overline{H}_{m, \overline{M}} > 0 \), then, \( \overline{H}_{m,M} > 0 \) for all \( M \) such that \( \overline{M} < M < M_{0} \), where \( M_{0} \) is the smallest positive solution to the fixed-point problem given by the operator \( \overline{T}_m(M_{0},t,s) \), defined in \eqref{opfixo}. As we have noticed previously, since this operator is independent of \( M_{1} \) and \( \overline{H}_{m,0} = \overline{G}_{m} \), we can take \( M_{1} = 0 \), which simplifies the calculations considerably. Therefore, we aim to find the fixed points of the operator $\overline{T}_m^0(M_0,t,s)$ defined as \eqref{exprem22}.

Thus, in our particular case, we would have \( \overline{H}_{m,M} > 0 \) for all \( M \) such that \( -m < M < M_{0} \), and \( \min_{(t,s) \in \hat{J} \times \hat{J}} \overline{H}_{m,M_{0}}(t,s) = 0 \). From previous results, we see that the minimum of \( \overline{H}_{m,M} \) whenever \( \overline{H}_{m,M} \geq 0 \) on $\hat{J} \times \hat{J}$, is attained at \( \overline{q}(0^-) \) if \( M > 0 \) and \( \overline{q}(0^+) \) if \( M < 0 \), where \( q \) is defined in \eqref{defq}.

Taking all simplifications into account, the expression of operator $\overline{T}_{m}^{0}$, defined in \eqref{opfixo}, can be written as:
\begin{enumerate}
\item If $M_{0}>0$:
\begin{equation*}
\overline{T}_{m}^{0}(M_{0})=\lim_{t \rightarrow s^{-}} \lim_{s \rightarrow 0^{-}}\frac{\overline{G}_{m}(t,s)}{\int_{-T}^{T}\overline{G}_{m}(t,r)\overline{H}_{m,M_{0}}(0,s) \mathrm{d}r}=\frac{(m+M_{0})(\cos{(mT)}-\sin{(mT)})}{\cos(mT)+\sin(mT)}.
\end{equation*}
\item If $M_{0}<0$:
\begin{equation*}
\overline{T}_{m}^{0}(M_{0})=\lim_{t \rightarrow s^{-}} \lim_{s \rightarrow 0^{+}}\frac{\overline{G}_{m}(t,s)}{\int_{-T}^{T}\overline{G}_{m}(t,r)\overline{H}_{m,M_{0}}(0,s) \mathrm{d}r}=m+M_{0}.
\end{equation*}
\end{enumerate}

%
%
Thus, from the previous equalities and Lemma \ref{necesaria}, the Green's function \( \overline{H}_{m,M} \) with \( m \in (-\pi/4T, \pi/4T) \) is positive when \( m \neq 0 \) if and only if \( M \in \mathbb{R} \) is such that \( -m < M < \frac{1}{2}m(-1 + \cot(m\,T)) \). For \( m = 0 \), by continuity, it would be positive on \( 0 < M < \frac{1}{2T} \) (see Figure \ref{figura2}).

\begin{figure}[H]
	\begin{center}
	\includegraphics[scale=0.63]{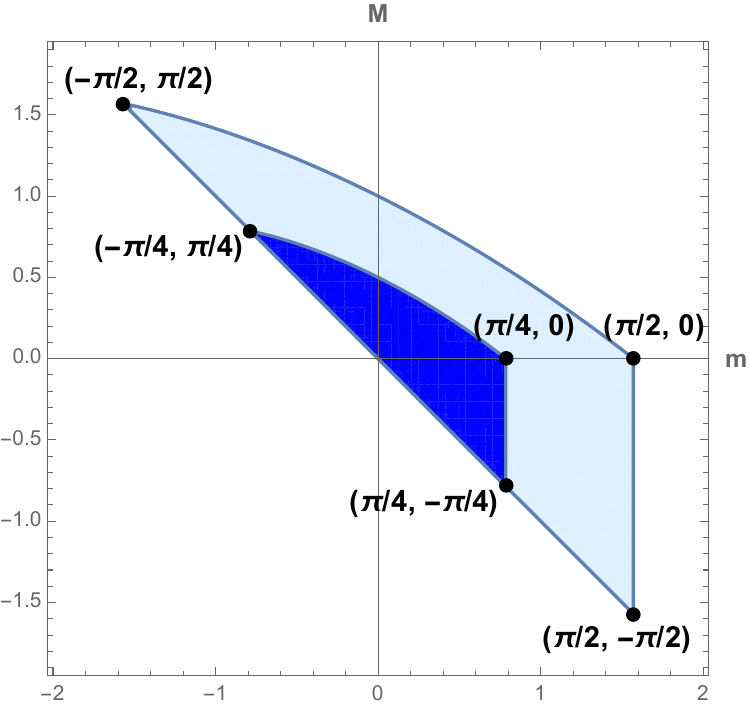}
	\end{center}
	\vspace{-20pt}
    \caption{Region where \( \overline{H}_{m,M} \) is positive for \( T = 1/2 \) in light blue and \( T = 1 \) in dark blue.}
	\label{figura2}
\end{figure}

On the other hand, by means of the symmetry property of \( \overline{H}_{m,M} \), which is proven in Lemma \ref{simetriah}, we can ensure that the function \( \overline{H}_{m,M} \) is negative if and only if \( M \) is such that \( -\frac{1}{2}m(1 + \cot(m \,T)) < M < -m \) and \( m \in (-\pi/4T, \pi/4T) \), \( m \neq 0 \). When \( m = 0 \), by continuity, it would be negative for \( -\frac{1}{2T} < M < 0 \) (see Figure \ref{figura3}).
\begin{figure}[H]
	\begin{center}
	\includegraphics[scale=0.63]{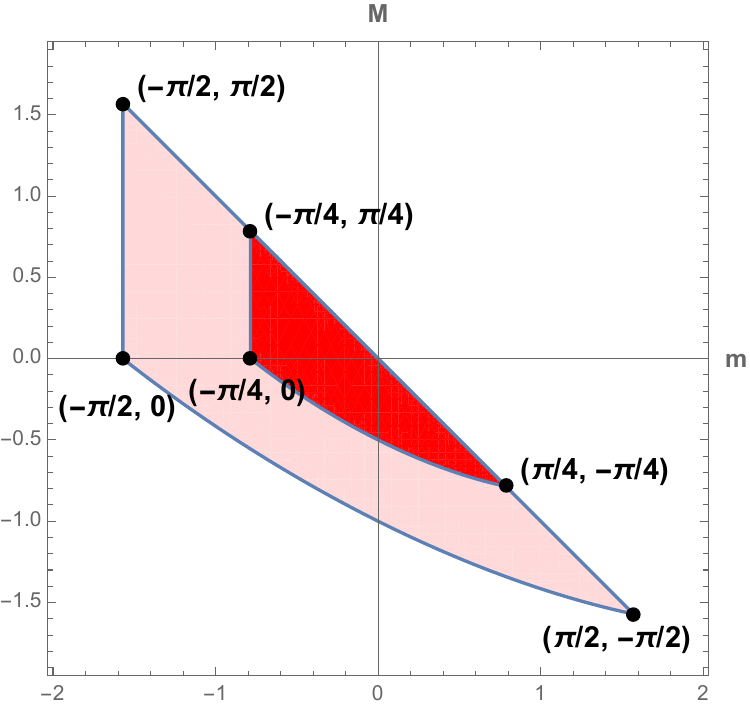}
	\end{center}
	\vspace{-20pt}
    \caption{Region where \( \overline{H}_{m,M} \) is negative for \( T = 1/2 \) in light red and \( T = 1 \) in dark red.}
	\label{figura3}
\end{figure}

\subsubsection{Constant sign region when $T \in (0,1]$ setting the minimum of the function equal to zero}

In this subsection, we will obtain the analytic expression of the constant sign region in another way. This involves setting the minimum to zero and determining an expression that relates \( m \) and \( M \).

We know that if \( \overline{H}_{m,M} \geq 0 \) in \( \hat{J} \times \hat{J} \), then
\begin{equation*} 
\min_{(t,s) \in J \times J}{\overline{H}_{m,M}(t,s)}=\left\{ \begin{array}{lll}
\lim_{t \rightarrow s^{-}}{\lim_{s \rightarrow 0^{-}}}{\overline{H}_{m,M}(t,s)}=\overline{H}_{m,M}((0^-)^-,0^-),& \mbox{\rm if}  & M>0,\\ \\
\lim_{t \rightarrow s^{-}}{\lim_{s \rightarrow 0^{+}}}{\overline{H}_{m,M}(t,s)}=\overline{H}_{m,M}((0^+)^-,0^+),& \mbox{\rm if}  & M<0.
\end{array}\right.
\end{equation*}

We want to obtain the expression for the pairs \((m, M)\) where the Green's function \( \overline{H}_{m,M} \) ceases to be positive. To do this, we need to set the previous expression to zero. This will give us a curve in terms of \( m \) and \( M \) that will help us delineate the region where the Green's function is positive. Setting it to zero yields the following values:

\begin{equation*}
\begin{array}{lll}
\overline{H}_{m,M}((0^-)^-,0^-)=0 \Leftrightarrow M= \frac{1}{2}m(-1+\cot{mT})& \mbox{\rm if}  & M>0,\\ \\
\quad \, \, \, \overline{H}_{m,M}((0^+)^-,0^+)=0 \Leftrightarrow m=\frac{\pi}{4T} & \mbox{\rm if}  & M<0.
\end{array}
\end{equation*}

Using the symmetry of \( \overline{H}_{m,M} \) given in Lemma \ref{simetriah}, we can obtain the curve in terms of \( m \) and \( M \) that will allow us to delineate the region where the Green's function is negative. We would have:

\begin{equation*}
\begin{array}{lll}
M= -\frac{1}{2}m(1+\cot{mT})& \mbox{\rm if}  & M>0,\\ \\
m=\frac{\pi}{4T} & \mbox{\rm if}  & M<0.
\end{array}
\end{equation*}

By representing the above when \( T = 1 \) together with the line \( m = -M \), since by Lemma \ref{necesaria} we know that a necessary condition for the Green's function \( \overline{H}_{m,M} \) to be positive is \( M > -m \) and for it to be negative is \( M < -m \), we obtain the graph shown in Figure \ref{figurana}.

By comparing figures \ref{figura2} and \ref{figurana}, we see that the two regions coincide.

\begin{figure}[H]
	\begin{center}
	\includegraphics[scale=0.6]{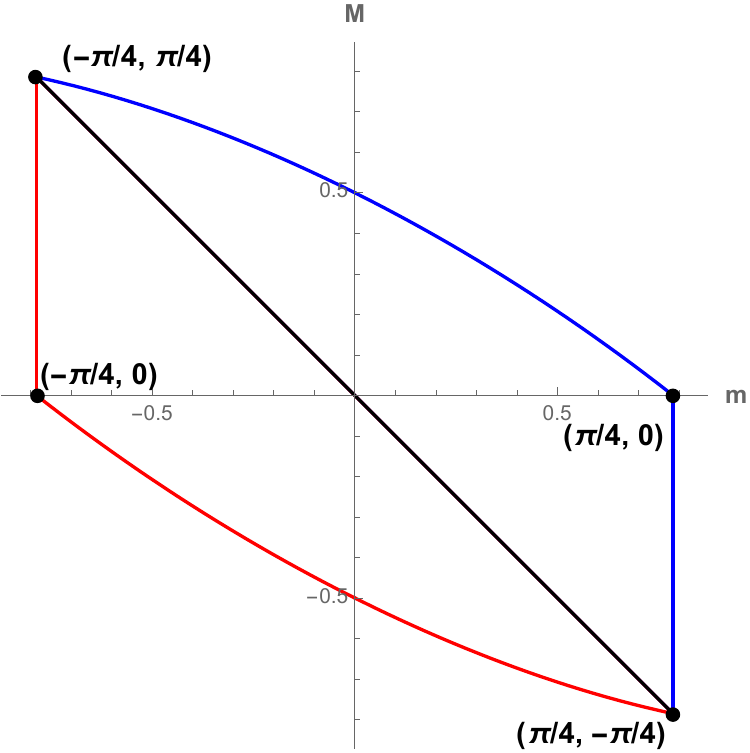}
	\end{center}
	\vspace{-20pt}
    \caption{Determination, setting the minimum of the function equal to zero, of the region where \( \overline{H}_{m,M} \) is positive (the region bounded between the blue and black curves) and where \( \overline{H}_{m,M} \) is negative (the region bounded between the red and black curves) for \( T = 1 \).}
	\label{figurana}
\end{figure}

\subsection{Study of the function $\overline{H}_{m,M}$ when $T>1$}

Now, we will study the case when $T>1$. Our first goal is to find the restrictions on the point $(\hat{t},\hat{s})$ where the function $\overline{H}_{m,M}$ attains its minimum, assuming it is positive.

\subsubsection*{Minimum of the function $\overline{H}_{m,M}$ when it is positive and $T>1$}

In the particular case where $m>0$ and $M>0$, we can easily deduce a result that restrict the points $(t,s) \in \hat{J} \times \hat{J}$ where the function $\overline{H}_{m,M}$ attains its minimum when it is positive. The steps will be similar to the case with $T<1$. First, we make the following remark:

\begin{remark}
The function $\overline{H}_{m,M}(t, \cdot)$ is discontinuous for values of $s \in \hat{J}$ that are integers. Therefore, if $s=n \in \mathbb{Z} \cap \hat{J}$, when searching for the minimum of the function $\overline{H}_{m,M}$, we must take into account that $$\overline{H}_{m,M}(t,n^-)=\lim_{s\rightarrow n^-}{\overline{H}_{m,M}(t,s)} \neq \lim_{s\rightarrow n^+}{\overline{H}_{m,M}(t,s)}=\overline{H}_{m,M}(t,n^+).$$
\end{remark}

Now, we present the following result.

\begin{proposition}
If the function $\overline{H}_{m,M}$ is positive, $m>0$ and $M>0$, then the minimum of $\overline{H}_{m,M}$ can occur only at $\overline{q}(n^-)$, $\overline{q}(n^+)$ with $n \in \mathbb{Z} \cap \hat{J}$ or at $\overline{q}(T)=\overline{q}(T^-)$, where $\overline{q}$ is given by \eqref{defq}.
\end{proposition}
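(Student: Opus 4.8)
The plan is to reduce the two--dimensional minimisation of $\overline{H}_{m,M}$ on $\hat J\times\hat J$ to a one--dimensional study of the diagonal trace $\overline{q}$ from \eqref{defq}, exactly as in the case $T\in(0,1]$, and then to prove that $\overline{q}$ cannot attain an interior minimum strictly between two consecutive integers, so that only the integer points (one--sidedly) and the endpoint $T$ survive as candidates.

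First I would fix $s\in\mathring{\hat J}$ and study $t\mapsto\overline{H}_{m,M}(t,s)$. By Part 3 of Proposition \ref{cara} its $t$--derivative equals $-m\,\overline{H}_{m,M}(-t,s)-M\,\overline{H}_{m,M}([t],s)$, which is strictly negative since $m>0$, $M>0$ and $\overline{H}_{m,M}$ is assumed positive. By Part 1 of Proposition \ref{cara} the map is continuous on $\hat J\setminus\{s\}$, so the corners at $t=-s$ and at the integers produce no upward jumps, and the function is strictly decreasing on $[-T,s)$ and on $(s,T]$; the only upward jump, of size one, occurs at $t=s$ by Part 4, while Part 5 gives $\overline{H}_{m,M}(-T,s)=\overline{H}_{m,M}(T,s)$. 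Hence for each fixed $s$ the minimum over $t$ is attained at $t=s^-$, that is, it equals $\overline{q}(s)$, and therefore
\[
\min_{(t,s)\in\hat J\times\hat J}\overline{H}_{m,M}(t,s)=\min_{s\in\hat J}\overline{q}(s).
\]

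Next I would analyse $\overline{q}$ itself. Using Part 2 of Proposition \ref{cara} together with the observation that $\overline{H}_{m,M}(t,\cdot)$ is discontinuous precisely at integer values of $s$, the function $\overline{q}$ is continuous and of class $\mathcal C^1$ on each open interval lying between two consecutive integers of $\hat J$, and may jump only at the integers. On such an interval I would differentiate $\overline{q}(s)=\overline{H}_{m,M}(s^-,s)$ twice, computing $\overline{q}'$ and $\overline{q}''$ by means of the differential relation of Part 3 of Proposition \ref{cara} and the identity $\frac{\partial}{\partial s}\overline{H}_{m,M}(t,s)=m\,\overline{H}_{m,M}(t,-s)$ of Proposition \ref{derivadas}, mirroring the computation already performed for $T\in(0,1]$. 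The target is to show $\overline{q}''<0$ on each such interval, so that $\overline{q}$ is concave there and its infimum on the closed subinterval is forced to an endpoint. Together with the jumps at the integers, this confines the infimum of $\overline{q}$ over $\hat J$ to one of the one--sided values $\overline{q}(n^-),\overline{q}(n^+)$ with $n\in\mathbb Z\cap\hat J$, or to an endpoint $\pm T$; the $s$--periodicity of Part 6 of Proposition \ref{cara}, combined with the definition \eqref{defq} of $\overline{q}$ at $-T$, identifies the two endpoints and leaves only $\overline{q}(T)=\overline{q}(T^-)$.

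The main obstacle is precisely the concavity step. For $T\in(0,1]$ the piecewise--constant term collapsed, since $[t]=0$ on $(-1,1)$ and $\overline{H}_{m,M}$ was given by the closed form \eqref{exparti2}, so $\overline{q}''$ reduced to $-m^2\bigl(3\,\overline{G}_m(s^-,s)+\overline{H}_{m,M}(s^-,s)\bigr)$, whose sign was transparent. For $T>1$ neither simplification is available: $[t]$ and $[-t]$ now run through several integers, so differentiation introduces a family of terms $\overline{H}_{m,M}(n,\cdot)$ at the integers $n\in\{[-T],\dots,[T]\}$, and one may not invoke positivity of $\overline{G}_m$, since Theorem \ref{positividadg} only guarantees it for $mT\in(0,\pi/4)$. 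The delicate work is therefore the careful bookkeeping of one--sided limits at $t=-s$ and at the integers, and the regrouping of the resulting terms so that $\overline{q}''$ is displayed as a negative multiple of a sum of positive quantities, using only the hypotheses $m>0$, $M>0$ and $\overline{H}_{m,M}>0$.
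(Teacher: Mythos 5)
Your proposal follows essentially the same route as the paper: reduce to the diagonal trace $\overline{q}$ via the strict $t$-monotonicity coming from $\frac{\partial}{\partial t}\overline{H}_{m,M}(t,s)=-m\,\overline{H}_{m,M}(-t,s)-M\,\overline{H}_{m,M}([t],s)<0$, then combine concavity of $\overline{q}$ between consecutive integers with the jumps at the integers and the periodicity $\overline{q}(T)=\overline{q}(-T)$ to confine the infimum to the one-sided integer values and the endpoint. The single step you flag as open is exactly what the paper supplies: the computation yields $\overline{q}''(s)=-2m^{2}\bigl(\overline{H}_{m,M}(s^-,s)+\overline{H}_{m,M}((-s)^+,-s)\bigr)-m\,M\bigl(\overline{H}_{m,M}(-[s],s)+2\,\overline{H}_{m,M}([s],-s)\bigr)$, which is manifestly negative under $m>0$, $M>0$ and $\overline{H}_{m,M}>0$, confirming the sign argument you anticipated.
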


\begin{proof}
According to parts $1$, $3$, $4$ and $5$ of Proposition \ref{cara} we have the following properties:
\begin{itemize}
\item $\overline{H}_{m,M}(\cdot,s)$ is well-defined and continuous for all $t \in \hat{J}$, $t \neq s$,
\item $\frac{\partial}{ \partial{t}}\overline{H}_{m,M}(t,s)<0$ for all $t \in \hat{J}$, $t \neq s$, $t \neq -s$, $t \notin D \setminus \{0\}$,
\item    $\overline{H}_{m,M}(s^+,s) - \overline{H}_{m,M}(s^-,s) = \overline{H}_{m,M}(s,s^-) - \overline{H}_{m,M}(s,s^+) = 1$ for all $s \in (-T,T)$, $s \notin D$,
\item  \( \overline{H}_{m,M}(T,s) = \overline{H}_{m,M}(-T,s) \) for all \( s \in (-T, T) \).
\end{itemize}
From these properties, we can deduce that the minimum of $\overline{H}_{m,M}$ occurs at $t=s^-$. 
Indeed, as in the case where $T<1$, we consider the function $\overline{q}: \hat{J} \rightarrow \mathbb{R}$, defined in \eqref{defq}.

Upon calculating the second derivative, we find:
\begin{equation*}
\begin{aligned}
\overline{q}''(s)&=-2m^{2}\left(\overline{H}_{m,M}(s^-,s)+\overline{H}_{m,M}((-s)^+,-s)\right)\\
& \quad -m\,M\left(\overline{H}_{m,M}(-[s],s)+\overline{H}_{m,M}([s],-s)+\overline{H}_{m,M}([s],-s)\right).
\end{aligned}
\end{equation*}
When $m>0$ and $M>0$, the function $\overline{q}$ is concave. Moreover $\overline{q}$ is well-defined and $\mathcal{C}^{1}$ on $\hat{J} \setminus D$. Additionally, it is easy to verify that $\overline{q}(T)=\overline{q}(-T)$.

Considering all the above, we conclude the proof of the result.
\end{proof}

In the general case, when $m$ and $M$ are not necessary positive, we can prove the following proposition that restricts the points $s \in \hat{J}$ where the function attains its minimum.

\begin{proposition}
	When the function $\overline{H}_{m,M}$ is positive, it can attain its minimum only at the following points:
	
	\begin{itemize}
		\item \underline{If $m < 0$}, at $s = n^+$ or $s=n^-$ with $n \in \mathbb{Z} \cap \hat{J}$ and $t \in \hat{J}$;
		
		\item \underline{If $m > 0$}: either
		\begin{itemize}
			\item at $s = n^+$ or $s = n^-$, with $n \in \mathbb{Z} \cap \hat{J}$ and $t \in \hat{J}$ or
			\item at $\overline{q}(s)$, with $s \in \hat{J}$ and $\overline{q}$ given by expression~\eqref{defq}.
		\end{itemize}
	\end{itemize}
\end{proposition}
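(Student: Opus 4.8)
The plan is to control the location of the minimum through the partial derivative in the second variable rather than the first, because for a general parameter $M$ the sign of $\partial_t\overline{H}_{m,M}$ furnished by Part 3 of Proposition~\ref{cara} is not determined by positivity alone. By Proposition~\ref{derivadas} we have $\frac{\partial}{\partial s}\overline{H}_{m,M}(t,s)=m\,\overline{H}_{m,M}(t,-s)$ for all $s\notin D_{t}$, so under the standing hypothesis $\overline{H}_{m,M}>0$ the sign of $\partial_s\overline{H}_{m,M}$ coincides with the sign of $m$. Hence, for each fixed $t$, the map $s\mapsto\overline{H}_{m,M}(t,s)$ is strictly monotone on every maximal interval of smoothness, and by Parts 2 and 4 of Proposition~\ref{cara} its only break points are the integers $s\in D$ (jumps), the diagonal point $s=t$ (a jump of height one with $\overline{H}_{m,M}(t,t^-)-\overline{H}_{m,M}(t,t^+)=1$, i.e. a downward jump as $s$ increases), and the reflected point $s=-t$ (a kink where the function stays continuous). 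The minimum over $s$ for fixed $t$ can therefore only be a one-sided limit at one of these break points or a value at an endpoint $s=\pm T$, and I would then eliminate the spurious candidates in each case.

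For $m<0$, the map $s\mapsto\overline{H}_{m,M}(t,s)$ is strictly decreasing on each smooth piece, so a left-hand limit $\overline{H}_{m,M}(t,p^-)$ can be a local (hence global) minimum only when the function jumps up at $p$. At the diagonal $s=t$ the jump is downward, so $\overline{H}_{m,M}(t,t^-)$ is exceeded by the values immediately to its right and is not a minimizer; at the reflected point $s=-t$ the function is continuous and strictly decreasing on both sides, so it is no extremum either; and at the endpoints $s=\pm T$ the boundary condition in Part 6 of Proposition~\ref{cara} gives $\overline{H}_{m,M}(t,T)=\overline{H}_{m,M}(t,-T)$, a value which for a decreasing function is the maximum on the first piece and so cannot be the minimum. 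The only surviving candidates are the integer break points, so the minimum is attained at $s=n^{\pm}$ with $n\in\mathbb{Z}\cap\hat{J}$ and $t\in\hat{J}$ arbitrary, as claimed.

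For $m>0$ the map $s\mapsto\overline{H}_{m,M}(t,s)$ is strictly increasing on each smooth piece, and the candidates for a minimum become the right-hand limits at break points. Now the downward diagonal jump does create a local minimum: using Part 4 of Proposition~\ref{cara} one gets $\overline{H}_{m,M}(t,t^+)=\overline{H}_{m,M}(t^-,t)=\overline{q}(t)$, so this candidate is exactly a value of the diagonal function $\overline{q}$ defined in~\eqref{defq}; the integer break points contribute the candidates $\overline{H}_{m,M}(t,n^+)$, while the kink at $s=-t$ is again excluded by continuity and monotonicity. This yields the stated dichotomy, that the minimum can only occur at $s=n^{\pm}$ with $n\in\mathbb{Z}\cap\hat{J}$ or at $\overline{q}(s)$ with $s\in\hat{J}$. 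I expect the main obstacle to lie precisely in this case: unlike for $m<0$, the endpoint $s=-T$ cannot be discarded from the periodicity relation alone, so one must argue either that the deepest descent always occurs at an interior downward jump or that the boundary value is captured on the diagonal through $\overline{q}(\pm T)$, and pinning down the jump directions at the integer lines (which forces a return to the representation~\eqref{exparti} and the indicator structure of the functions $\alpha_{ij}$) is the most delicate computational ingredient. The qualitative asymmetry between the two cases, however, follows cleanly from the fixed downward orientation of the unit jump across the diagonal, which fails to produce a local minimum when the ambient trend is decreasing but does so when it is increasing.
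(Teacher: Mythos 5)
Your argument is correct and follows essentially the same route as the paper: Proposition \ref{derivadas} gives that the sign of $\frac{\partial}{\partial s}\overline{H}_{m,M}(t,\cdot)$ equals the sign of $m$ under positivity, and the continuity, unit-jump and periodicity properties of Proposition \ref{cara} then confine the minimum to the break points, with the downward diagonal jump producing the $\overline{q}$-candidate only in the increasing case. The endpoint worry you flag for $m>0$ dissolves at once: by periodicity $\overline{H}_{m,M}(t,-T)=\overline{H}_{m,M}(t,T)$, which dominates the infimum of the last increasing piece of $s\mapsto\overline{H}_{m,M}(t,s)$, and that infimum is already one of the listed candidates (an integer $n^{+}$ or the diagonal value $\overline{q}(t)$), so $s=-T$ never yields a smaller value.
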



\begin{proof}
Taking into account Proposition \ref{derivadas}, we deduce that if \( \overline{H}_{m,M} \) is positive then \( \frac{\partial}{\partial s} \overline{H}_{m,M}(t,s) \) has a constant sign (where it is well-defined). Specifically,  
\[
\frac{\partial}{\partial s} \overline{H}_{m,M}(t,s) = 
\begin{cases} 
>0, & \text{if } m > 0, \\[8pt]
<0, & \text{if } m < 0.
\end{cases}
\quad (t,s) \in \hat{J} \times \hat{J}, \, s \notin D_{t}
\]  
From this, we conclude that for any $t \in \hat{J}$ given, function \( \overline{H}_{m,M}(t,\cdot) \) is monotone with respect to $s$. Considering Proposition \ref{cara}, we have that:  
\begin{enumerate}
    \item The function \( \overline{H}_{m,M}(t,\cdot) \) is continuous for all \( s \in \hat{J} \), \( s \notin D_{t} \),
    \item \( \overline{H}_{m,M}(t,T) = \overline{H}_{m,M}(t,-T) \) for all \( t \in (-T, T) \),
    \item  $\overline{H}_{m,M}(s^+,s) - \overline{H}_{m,M}(s^-,s) = \overline{H}_{m,M}(s,s^-) - \overline{H}_{m,M}(s,s^+) = 1$ for all $s \in (-T,T)$, $s \notin D$.
\end{enumerate}  

As a direct consequence of the previous properties, we deduce that if function \( \overline{H}_{m,M}(t, \cdot) \) is increasing (i.e., \( m > 0 \)), the minimum can only occur at an integer value of \( s=n \) (which could be \( n^- \) or \( n^+ \)) with $s \in \hat{J}$ or at \( s = t^+ \) with $t \in \hat{J}$. On the other hand, when the function \( \overline{H}_{m,M}(t, \cdot) \) decreases (i.e., \( m < 0 \)), the minimum can only occur at an integer value of \( s=n \) (either \( n^- \) or \( n^+ \)) with $s \in \hat{J}$. It cannot occur at \( s = t^+ \) because, at this point, the derivative of \( \overline{H}_{m,M}(t, \cdot) \) is negative.

\end{proof}

\subsubsection{Analytical conjecture of the region of constant sign when \( T > 1 \)}

By numerically searching for the minimum of the function \( \overline{H}_{m,M}(t,s) \) when it is positive with \( (t,s) \in \hat{J} \times \hat{J} \), satisfying the restrictions already proven, we observe that, as in the case \( T < 1 \), the minimum seems to be reached at \( \overline{q}(0^-) \) if \( M > 0 \) or \( \overline{q}(0^+) \) if \( M < 0 \). Therefore, we conjecture the following:

\begin{conjecture}
If \( \overline{H}_{m,M}(t,s) > 0 \), the minimum of the function is attained at \( \overline{q}(0^-) \) if \( M > 0 \) or \( \overline{q}(0^+) \) if \( M < 0 \).
\end{conjecture}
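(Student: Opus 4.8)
The plan is to lift Proposition~\ref{minimo} from $T\in(0,1]$ to $T>1$ by sharpening the two localization propositions that immediately precede this conjecture. Under the standing hypothesis $\overline{H}_{m,M}>0$ we already know, by Lemma~\ref{lemapi} together with Theorem~\ref{positividadg}, that $mT\in(-\tfrac{\pi}{4},\tfrac{\pi}{4})$, and that $m+M>0$ is necessary for positivity (as shown above). Those propositions reduce the search for $\min_{(t,s)\in\hat J\times\hat J}\overline{H}_{m,M}$ to a one–dimensional competition: either along the diagonal profile $\overline q(s)=\overline{H}_{m,M}(s^-,s)$ defined in \eqref{defq}, or among the one–sided integer values $\overline{H}_{m,M}(t,n^\pm)$ with $n\in\mathbb{Z}\cap\hat J$. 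I would first settle the case $m>0$ and then recover $m<0$.

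For $m>0$ (and, to begin with, $M>0$), differentiating $\overline q$ twice by means of Proposition~\ref{derivadas} and the differential identity \eqref{enova}, exactly as in the computation carried out just before the conjecture, yields on every open cell $(n,n+1)$ containing no integer a second derivative of the form
\begin{equation*}
\overline q''(s)=-2m^2\bigl(\overline{H}_{m,M}(s^-,s)+\overline{H}_{m,M}((-s)^+,-s)\bigr)-mM\bigl(\overline{H}_{m,M}(-[s],s)+2\,\overline{H}_{m,M}([s],-s)\bigr),
\end{equation*}
all of whose arguments are positive; hence $\overline q''<0$ and $\overline q$ is concave on each cell. Consequently the minimum of $\overline q$ over a cell is attained at one of its integer endpoints (as one–sided limits), and the endpoints $\pm T$ are discarded through $\overline q(T)=\overline q(-T)$ and $\overline q'(T)=\overline q'(-T)$, which persist for $T>1$ from the periodicity recorded in Proposition~\ref{cara}. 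This localizes the minimum to the discrete set of one–sided integer values $\overline q(n^\pm)$, $n\in\mathbb{Z}\cap\hat J$.

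The heart of the argument is then a discrete descent principle: one must show that the sequence $n\mapsto\overline q(n^\pm)$ is minimized at $n=0$, i.e. that moving an integer node closer to the origin strictly decreases the diagonal value. I would try to establish this by comparing $\overline q$ on two adjacent cells through the sign of $\overline q'$ there (recall $\overline q'(s)=m\bigl(\overline{H}_{m,M}(s,-s)-\overline G_m(-s,s)\bigr)$ in the $T\le1$ reduction, whose $T>1$ analogue must be re-derived from \eqref{exparti}), together with the jump of $\overline q$ at each interior integer, computed as in the Remark following Proposition~\ref{cara}. Granting the descent, the global minimum sits at $s=0$, and its sign-of-$M$ resolution is then immediate: the jump $\overline q(0^+)-\overline q(0^-)$, computed as in Proposition~\ref{carq}(2) (now from \eqref{exparti} rather than \eqref{exparti2}), equals $\tfrac{M}{m+M}$, which has the sign of $M$ because $m+M>0$. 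Hence $\overline q(0^-)<\overline q(0^+)$ when $M>0$ and $\overline q(0^+)<\overline q(0^-)$ when $M<0$, giving precisely the claimed minimizer. The case $m<0$ follows the same scheme, the analogous monotonicity of $s\mapsto\overline{H}_{m,M}(t,\cdot)$ from Proposition~\ref{derivadas} forcing the per–$t$ minimum onto the integer nodes $n^\pm$, with the same jump computation settling the side of $s=0$.

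The main obstacle is the discrete descent step. For $T\le1$ the interval $\hat J$ contains no interior integer besides $0$, so concavity alone pins the minimum at the origin and the closed form \eqref{exparti2} makes every quantity explicit. For $T>1$ this closed form is unavailable: one is forced to work with the full matrix expression \eqref{exparti} and the inverse entries $\tilde a_{ij}$, for which no manageable sign information on the cross–cell differences $\overline q(n^\pm)-\overline q((n\pm1)^\pm)$ is presently available; moreover the concavity computation above genuinely requires $M>0$, so the mixed–sign regime $m>0>M$ would demand a separate monotonicity argument for $\overline q$. It is exactly this absence of a tractable handle on \eqref{exparti} that keeps the statement at the level of a conjecture.
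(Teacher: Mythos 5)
This statement is left as a \emph{conjecture} in the paper: the authors support it only by a numerical search for the minimum of $\overline{H}_{m,M}$ over the admissible points (Section 5.4 and the figures for $T=1.6$), and they do not supply a proof. Your proposal is therefore not being measured against an existing argument, and, to your credit, you end by conceding that the decisive step is missing. That concession is accurate: what you call the ``discrete descent principle'' --- showing that among the one-sided integer nodes $\overline q(n^{\pm})$, $n\in\mathbb{Z}\cap\hat J$, the smallest value occurs at $n=0$ --- is precisely the content of the conjecture once the localization propositions have done their work, and nothing in your sketch supplies it. Per-cell concavity of $\overline q$ only pushes the minimum of each cell to its endpoints; it gives no comparison \emph{between} cells, and the sign of the cross-cell differences $\overline q(n^{\pm})-\overline q((n\pm1)^{\pm})$ depends on the inverse matrix entries $\tilde a_{ij}$ in \eqref{exparti}, for which no sign control is available. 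So the attempt has a genuine gap, and it is the same gap that keeps the statement conjectural in the paper.

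Two further points in your sketch silently import facts proved only for $T\in(0,1]$. First, the restriction $mT\in(-\tfrac{\pi}{4},\tfrac{\pi}{4})$ is obtained in the paper from Lemma \ref{lemapi}, whose proof uses the identity $\overline{H}_{m,M}(0,s)=\tfrac{m}{m+M}\overline G_m(0,s)$, which comes from the closed form \eqref{exparti2} and is not established for $T>1$. Second, the jump $\overline q(0^+)-\overline q(0^-)=\tfrac{M}{m+M}$, which is what resolves the choice between $\overline q(0^-)$ and $\overline q(0^+)$ according to the sign of $M$, is computed in Proposition \ref{carq} from \eqref{exparti2}; for $T>1$ the jump of $\overline{H}_{m,M}(t,\cdot)$ at an integer involves the quantity $1-M\sum_i\int_{-T}^{T}\overline G_m(j_0,r)\alpha_{ij_0}(r)\,\mathrm{d}r$ from the Remark after Proposition \ref{cara}, so the value $\tfrac{M}{m+M}$ would have to be re-derived from \eqref{exparti} rather than asserted. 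Until the descent step and these two identities are established for general $T>1$, the proposal remains a plausible programme rather than a proof --- which is consistent with the paper's own decision to state the result as a conjecture.
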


If we accept this conjecture, we can obtain the explicit region and the analytical expressions in terms of \( m \) and \( M \) where the function \( \overline{H}_{m,M} \) is positive for \( T > 0 \) using either of the methods (the fixed-point method from Section \ref{metodofixo} or by setting the minimum of the function equal to zero). For example, we implemented the case with \( T = 1.6 \) in Mathematica and obtained the regions shown in Figure \ref{figura16}.

\begin{figure}[H]
	\begin{center}
	\includegraphics[scale=0.6]{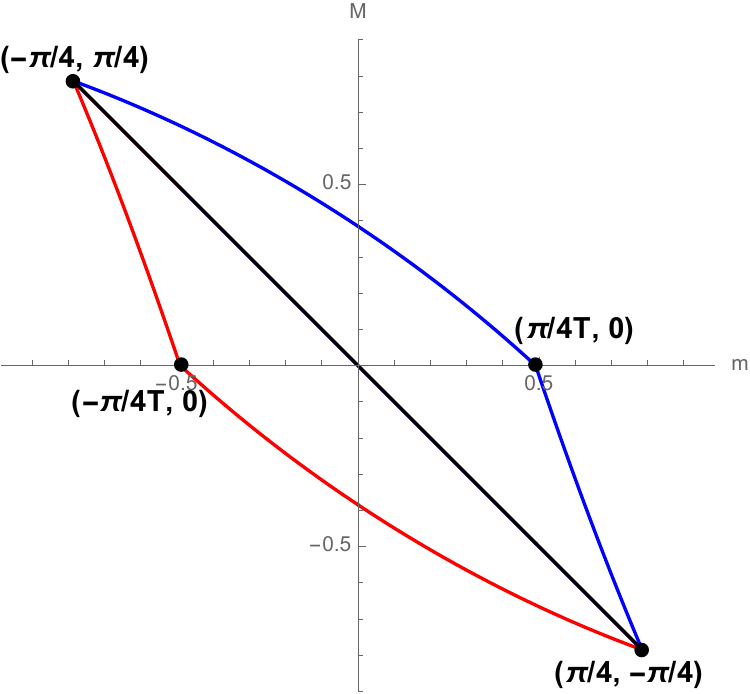}
	\end{center}
	\vspace{-20pt}
    \caption{Analytical determination of the region where \( \overline{H}_{m,M} \) is positive (the region bounded between the blue and black curves) and where \( \overline{H}_{m,M} \) is negative (the region bounded between the red and black curves) for \( T = 1.6 \).}
	\label{figura16}
\end{figure}

\subsubsection{Numerical approximation of the region of constant sign when \( T > 1 \)}
\label{secnume}

Even in the case where we do not know the point \((\hat{t},\hat{s})\) where the function \(\overline{H}_{m,M}\) attains its minimum when it is positive, we can numerically approximate the region where it has a constant sign. In general, this can be useful because it allows us to delineate this region knowing only the form of the Green's function.

We present a Python code that allows us to approximate this region for the function \(\overline{H}_{m,M}\) without making any conjectures. We divide the procedure into three cases:

\begin{itemize}
\item \underline{Case \(m>0\) and \(M>0\).} 

This is the simplest case because we already know that
\begin{equation*}
\min_{(t,s) \in \hat{J} \times \hat{J}} \overline{H}_{m,M} = \min\left\{{\{\overline{q}(s^+), \overline{q}(s^-), \overline{q}(T)\}}, \; s \in \{[-T], \ldots, 0, \ldots, [T]\}\right\}.
\end{equation*}
To approximate the region, we can use numerical variations of any of the methods previously studied, such as the fixed-point method or solving for the roots where the minimum of the function equals zero. For instance, using the second approach, we evaluate \(\overline{q}(T)\), \(\overline{q}(s^+)\) and \(\overline{q}(s^-)\) where \(s \in \hat{J}\) represents integer values, for different pairs of \((m,M)\). Subsequently, we compute the minimum value for each pair $(m,M)$. We then represent the region in terms of the values of \(m\) and \(M\) where this minimum is greater than zero. This region coincides with the region where the function \(\overline{H}_{m,M}\) is positive.

\item \underline{Case \(m<0\) and \(M>0\).}

In this case, we know that the minimum can only occur at \(s=n^+\) or $s=n^-$ with $n \in \mathbb{Z} \cap \hat{J}$. Here, we use a numerical variation of the fixed-point method, as it is computationally much more efficient. This efficiency arises because it requires far fewer evaluations of the function \(\overline{H}_{m,M}\), which depends on the inverse of a matrix whose size increases with \(T\). From equation \eqref{exprem22}, we know that
\begin{equation}
\begin{aligned}
\overline{T}_{m}^{0}(M,t,s) &= \frac{\overline{G}_{m}(t,s)}{\int_{-T}^{T} \overline{G}_{m}(t,r) \overline{H}_{m,M}([r],s) \, \mathrm{d}r} \\
&= \frac{\overline{G}_{m}(t,s)}{\overline{H}_{m,M}([-T],s) \int_{-T}^{[-T]} \overline{G}_{m}(t,r) \, \mathrm{d}r + \cdots + \overline{H}_{m,M}([T],s) \int_{[T]}^{T} \overline{G}_{m}(t,r) \, \mathrm{d}r}.
\end{aligned}
\label{maxt}
\end{equation}

For any \(m<0\), following Theorem  \ref{positividadg} we know that the function \(\overline{G}_{m}\) is not positive, and thus there exists at least one point \((t_{m},s_{m}) \in \hat{J} \times \hat{J}\) verifying that \(\overline{G}_{m}(t_{m},s_{m})<0\). From equations \eqref{emfixed} and \eqref{exprem} with \(M_{1}=0\), we deduce that \(\int_{-T}^{T} \overline{G}_{m}(t_{m},r) \overline{H}_{m,M_{0}}([r],s) \, \mathrm{d}r < 0\). Finally, taking into account equation \eqref{mcerta}, we have
\begin{equation*}
M_{0} > \overline{T}_{m}^{0}(M_{0},t_{m},s_{m}).
\end{equation*}
Following the reasoning in Section \ref{metodofixo}, we seek the global maximum of $\overline{T}_{m}^0(M,t,s)$ for $(t,s) \in \hat{J} \times \hat{J}$. If this maximum is attained, we will keep that value.


Thus, the Python code will compute the value of \(\overline{T}_{m}^{0}(M,t,s)\) for different pairs of values \((m,M)\) and search for the maximum for values \(t \in \hat{J}\) and integer $s=n^+$ or $s=n^-$ with $s \in \hat{J}$. Given this, we will represent the region, as a function of the parameters $m$ and $M$, where this maximum value ($\max_{(t,s) \in \hat{J} \times \hat{J}}{\overline{T}_{m}^{0}(M,t,s)}$) is less than $M$. This region must coincide with the region where $\overline{H}_{m,M}$ is positive.
 To optimize the process, the code will precompute values of \(\overline{H}_{m,M}([t],s)\) and the different integrals of \(\overline{G}_{m}\) to avoid repetitive calculations in the loops. Furthermore, parallelization techniques will also be employed to improve efficiency. The Python code is included in the appendix.

\item \underline{Case \(m>0\) and \(M<0\).}

In this case, the minimum of the function \(\overline{H}_{m,M}\) may occur when \(s=n^-\) or $s=n^+$ with $n \in \mathbb{Z} \cap \hat{J}$ for any \(t \in \hat{J}\), or $\overline{q}(s)$ with $s \in \hat{J}$ and $\overline{q}$ given by expression \eqref{defq}. For integer values of \(s \in \hat{J}\), we follow a similar procedure to the case \(m < 0\) and \(M > 0\). Using analogous steps, we determine that it is necessary to search for the minimum of \(\overline{T}_{m}^{0}(M_{0},t,s)\).

For the case \(t = s^+ \in \hat{J}\), the procedure is analogous to the one used in the case \(m > 0\) and \(M > 0\).

\end{itemize}

Following the previously described steps, we approximate the region where the function $\overline{H}_{m,M}$ is positive, as shown in Figure \ref{aproxregion}. A similar procedure can be employed to determine the regions where $\overline{H}_{m,M}$ is negative. Alternatively, the symmetry property outlined in Lemma \ref{simetriah} can be utilized to infer this region directly.

\begin{figure}[H]
	\begin{center}
	\includegraphics[scale=0.25]{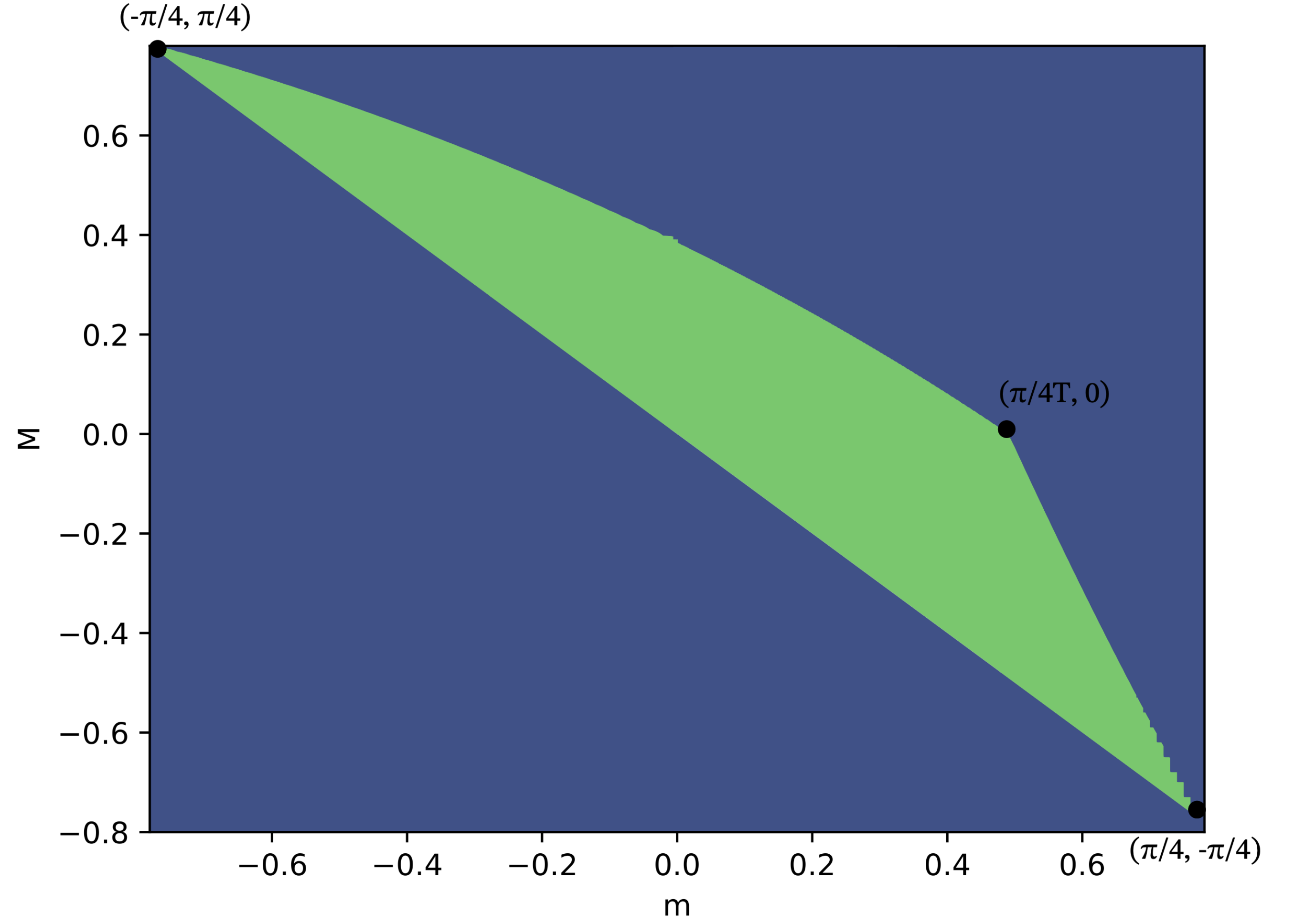}
	\end{center}
	\vspace{-20pt}
    \caption{Approximation of the region where the function $\overline{H}_{m,M}$ is positive for $T=1.6$.}
	\label{aproxregion}
\end{figure}

We observe that this region seems to be the same as the one shown in Figure \ref{figura16}.

\section{Existence of solution to nonlinear problems}
\label{sec6}
In this section, we will present some results that allow us to deduce the existence of solutions for nonlinear problems with periodic boundary conditions.

We will obtain existence results by using the monotone method, and employing the concepts of lower and upper solutions. 

\subsection{Monotone method}
One of the most common techniques for studying the existence and, in some cases, the construction of solutions to differential equations is the method of lower and upper solutions.

We will follow the classical approach for this type of problem (see \cite{cabada2014green} and references therein) to establish conditions under which the first order periodic problem with reflection and piecewise constant arguments,
\begin{equation}
v'(t)=f(t,v(-t),v([t])) \quad \textup{a.e. } t \in \hat{J}, \quad v(-T)=v(T),
\label{promono}
\end{equation}
with $T>0$ have solution.

We assume that $f:\hat{J} \times \mathbb{R} \times \mathbb{R} \rightarrow \mathbb{R}$ is a Carathéodory function, that is:
\begin{itemize}
    \item For almost every $t \in \hat{J}$, the function $(x,y) \in \mathbb{R} \times \mathbb{R} \rightarrow f(t,x,y) \in \mathbb{R}$ is continuous.
    \item For all $(x,y) \in \mathbb{R} \times \mathbb{R}$, the function $t \in \hat{J} \rightarrow f(t,x,y)$ is Lebesgue measurable.
    \item For all $R > 0$, there exists $h_{R} \in \mathcal{L}^{1}(\hat{J})$ such that, if $|x| < R$ and $|y| < R$, then
    \begin{equation*}
    |f(t,x,y)| \leq h_{R}(t) \textup{ a.e. } t \in \hat{J}.
    \end{equation*}
\end{itemize}

\begin{definition}
We say that $\alpha \in AC(\hat{J})$ is a lower solution of \eqref{promono} if $\alpha$ satisfies
\begin{equation*}
\alpha'(t) \geq f(t, \alpha(-t),\alpha([t])) \textup{ a.e. }t \in \hat{J}, \quad \alpha(-T)-\alpha(T) = 0.
\end{equation*}
\end{definition}
\begin{definition}
We say that $\beta \in AC(\hat{J})$ is an upper solution of \eqref{promono} if $\beta$ satisfies
\begin{equation*}
\beta'(t) \leq f(t, \beta(-t),\beta([t])) \textup{ a.e. }t \in \hat{J}, \quad \beta(-T)-\beta(T) = 0.
\end{equation*}
\end{definition}

We now present a theorem in line with the monotone method of lower and upper solutions that ensures the existence of a solution for Problem \eqref{promono} under certain additional conditions.
\begin{theorem}
Let us assume that the following hypotheses hold:
\begin{enumerate}
    \item There exist $\alpha$ and $\beta$, a pair of lower and upper solutions of Problem \eqref{promono}, such that $\beta \leq \alpha$ on $[-T,T]$.
    \item The function $f$ is a Carathéodory function satisfying
    \begin{equation*}
    f(t,x_{1},y_{1}) - f(t,x_{2},y_{2}) \geq -m(x_{1}-x_{2}) - M(y_{1}-y_{2})
    \end{equation*}
    for almost every $t \in \hat{J}$, with $\beta(-t) \leq x_{2} \leq x_{1} \leq \alpha(-t)$ and $\beta([t]) \leq y_{2} \leq y_{1} \leq \alpha([t])$.
    \item The pair $(m,M)$ satisfies that the Green's function $\overline{H}_{m,M}$ associated with Problem \eqref{im1} is positive. (If $T \in (0,1]$, this holds when $m \in (-\frac{\pi}{4T},\frac{\pi}{4T})$ and $-m < M < \frac{m}{2}(-1 + \cot(mT))$).
\end{enumerate}
Then, there exist two monotone sequences $(\alpha_{n})_{n \in \mathbb{N}}$ and $(\beta_{n})_{n \in \mathbb{N}}$, decreasing and increasing respectively, with $\alpha_{0} = \alpha$, $\beta_{0} = \beta$, which converge uniformly to the extremal solutions on $[\beta, \alpha]$ of Problem \eqref{promono}, respectively.
\label{teomono}
\end{theorem}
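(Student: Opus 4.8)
The plan is to recast the nonlinear problem \eqref{promono} as a fixed-point problem for an integral operator built from the Green's function $\overline{H}_{m,M}$, and then run the standard monotone iteration. First I would add the damping terms $m\,v(-t)+M\,v([t])$ to both sides of \eqref{promono}, so that for each $\eta$ in the order interval $[\beta,\alpha]=\{u:\beta\le u\le\alpha \text{ on }\hat{J}\}$ the modified datum
\[
\sigma_\eta(t):=f(t,\eta(-t),\eta([t]))+m\,\eta(-t)+M\,\eta([t])
\]
lies in $\mathcal{L}^1(\hat{J})$, by the Carathéodory bound $h_R$ with $R=\|\alpha\|_\infty+\|\beta\|_\infty$. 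Since hypothesis~3 guarantees that $\overline{H}_{m,M}$ exists and is positive, I define $T\eta\in\Omega^1$ as the unique solution of the linear Problem \eqref{im1} with right-hand side $\sigma_\eta$, namely
\[
T\eta(t)=\int_{-T}^{T}\overline{H}_{m,M}(t,s)\,\sigma_\eta(s)\,\mathrm{d}s .
\]
A fixed point of $T$ is exactly a solution of \eqref{promono}.

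The second step is to prove that $T$ is monotone nondecreasing on $[\beta,\alpha]$ and maps this set into itself. For $\eta_1\le\eta_2$ in $[\beta,\alpha]$, hypothesis~2 applied with $x_1=\eta_2(-t)\ge x_2=\eta_1(-t)$ and $y_1=\eta_2([t])\ge y_2=\eta_1([t])$ gives $\sigma_{\eta_2}(t)-\sigma_{\eta_1}(t)\ge 0$ a.e., and positivity of $\overline{H}_{m,M}$ then forces $T\eta_2-T\eta_1\ge 0$. The comparison principle I would extract from the Green's function representation is: if $w\in\Omega^1$ satisfies $w'(t)+m\,w(-t)+M\,w([t])\ge 0$ a.e. and $w(-T)=w(T)$, then $w=\int_{-T}^{T}\overline{H}_{m,M}(\cdot,s)\,[\,w'(s)+m\,w(-s)+M\,w([s])\,]\,\mathrm{d}s\ge 0$. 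Feeding the lower-solution inequality for $\alpha$ and the upper-solution inequality for $\beta$ into this principle yields $T\alpha\le\alpha$ and $T\beta\ge\beta$; together with monotonicity and $\beta\le\alpha$ this gives $T([\beta,\alpha])\subseteq[\beta,\alpha]$.

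With these ingredients I would set $\alpha_0=\alpha$, $\beta_0=\beta$, $\alpha_{n+1}=T\alpha_n$, $\beta_{n+1}=T\beta_n$, and deduce by induction the chain $\beta_0\le\beta_1\le\cdots\le\beta_n\le\alpha_n\le\cdots\le\alpha_1\le\alpha_0$, so $(\beta_n)$ is increasing, $(\alpha_n)$ is decreasing, and both converge pointwise. To upgrade to uniform convergence I would show that $T$ is completely continuous: the uniform bound $|\sigma_\eta|\le h_R$, the boundedness of $\overline{H}_{m,M}$, and the estimate $|(T\eta)'|\le(|m|+|M|)\|T\eta\|_\infty+h_R$ give uniform boundedness and equicontinuity of $T([\beta,\alpha])$, so by Arzelà–Ascoli and monotonicity both sequences converge uniformly to continuous limits $\varphi=\lim_n\beta_n$ and $\psi=\lim_n\alpha_n$. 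Passing to the limit in $\alpha_{n+1}=T\alpha_n$ via dominated convergence inside the integral and continuity of $f$ in its last two arguments shows $T\varphi=\varphi$ and $T\psi=\psi$, so both are solutions; extremality follows because any solution $u\in[\beta,\alpha]$ satisfies $u=Tu$, hence $\beta_n\le u\le\alpha_n$ for all $n$ by induction, giving $\varphi\le u\le\psi$.

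The main obstacle is the comparison principle in the second step: the operator $w\mapsto w'+m\,w(-\cdot)+M\,w([\cdot])$ mixes a reflected argument with a piecewise-constant argument, so no classical scalar maximum principle applies directly. The argument rests entirely on replacing the pointwise maximum principle by the sign of $\overline{H}_{m,M}$, whose positivity region is exactly the one characterized in the previous subsections (and which furnishes precisely the interval $-m<M<\tfrac{m}{2}(-1+\cot(mT))$ quoted in hypothesis~3 when $T\in(0,1]$); outside that region the iteration need not preserve the order interval. A secondary technical point is verifying the complete continuity of $T$, but this reduces to the routine equicontinuity estimate above together with the uniform boundedness of the iterates inside $[\beta,\alpha]$.
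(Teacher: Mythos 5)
Your proposal is correct and follows essentially the same route as the paper: both recast \eqref{promono} as a fixed point problem for the integral operator built from $\overline{H}_{m,M}$, use hypothesis 2 together with the positivity of the Green's function to get monotonicity of the operator and the bracketing $T\beta\geq\beta$, $T\alpha\leq\alpha$, and then iterate from $\alpha_0=\alpha$, $\beta_0=\beta$. The only (minor) divergence is that you obtain uniform convergence via equicontinuity and Arzel\`a--Ascoli, whereas the paper invokes Dini's theorem for the monotone bounded sequences; both are standard and your version is, if anything, slightly more self-contained.
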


\begin{proof}
Consider the Problem
\begin{equation}
\begin{split}
v'(t)+mv(-t)+Mv([t])&=f(t, \gamma(-t), \gamma([t]))+m\gamma(-t)+M\gamma([t]) \textup{ a.e. }t \in \hat{J}, \\
\quad v(-T)&=v(T)
\label{promono2}
\end{split}
\end{equation}
where $\gamma \in \mathcal{L}^{1}(\hat{J})$ verifies that $\beta \leq \gamma \leq \alpha$. Then, by Condition $2$, we know that
\begin{equation*}
\begin{aligned}
(\alpha-v)'(t)+m(\alpha-v)(-t)+M(\alpha-v)([t])  \\
\geq f(t, \alpha(-t), \alpha([t]))-f(t, \gamma(-t),\gamma([t]))+m(\alpha-\gamma)(-t)+M(\alpha-\gamma)([t])
&=h_{\gamma(t)} \geq 0 \textup{ a.e }t \in \hat{J}, \\
(\alpha-v)(-T) &= (\alpha-v)(T).
\end{aligned}
\end{equation*}

From the previous inequalities and the regularity of function $\gamma$, we know that $h_{\gamma} \in \mathcal{L}^{1}(\hat{J})$ and we have that
\begin{equation*}
(\alpha-v)(t)=\int_{-T}^{T}{\overline{H}_{m,M}(t,s)h_{\gamma}(s) \mathrm{d}s} \geq 0 \quad \forall \, t \in \hat{J}.
\end{equation*}

Let us now consider $v_{i} = \mathcal{T} \gamma_{i}$, where the operator $\mathcal{T} \gamma_{i}$ is given by
\begin{equation}
(\mathcal{T} \gamma_{i})(t):= \int_{-T}^{T}{\overline{H}_{m,M}(t,s)[f(s,\gamma_{i}(s), \gamma_{i}([s]))+m\gamma_{i}(-s)+M \gamma_{i}([s])] \mathrm{d}s}.
\label{aproxt}
\end{equation}

It is easy to see that the operator $\mathcal{T}$ is continuous.
Moreover, $v_{i}$ is the unique solution of Problem \eqref{promono2} for $\gamma = \gamma_{i} \in \mathcal{L}^{1}(J)$, and suppose that $\beta \leq \gamma_{1} \leq \gamma_{2} \leq \alpha$. Then,
\begin{equation*}
\begin{split}
(v_{2}-v_{1})'(t)+m(v_{2}-v_{1})(-t)+M(v_{1}-v_{2})([t])&=f(t, \gamma_{2}(-t), \gamma_{2}([t]))-f(t, \gamma_{1}(-t), \gamma_{1}([t])) \\
& \quad +m(\gamma_{2}-\gamma_{1})(-t)+M(\gamma_{2}-\gamma_{1})([t]) \geq 0 \textup{ a.e. }\hat{J}.
\end{split}
\end{equation*}
\begin{equation*}
(v_{2}-v_{1})(-T)=(v_{2}-v_{1})(T).
\end{equation*}
Therefore, $v_{1} \leq v_{2}$ on $\hat{J}$.

Consequently, we can construct the mentioned sequences $(\alpha_{n})_{n \in \mathbb{N}}$ and $(\beta_{n})_{n \in \mathbb{N}}$ as follows. We take $\alpha_{0} = \alpha$, $\beta_{0} = \beta$, $\alpha_{n+1} = \mathcal{T}\alpha_{n}$ and $\beta_{n+1} = \mathcal{T}\beta_{n}$ for all $n \in \mathbb{N}$. Moreover, the sequences $(\alpha_{n})_{n \in \mathbb{N}}$ and $(\beta_{n})_{n \in \mathbb{N}}$ are monotone non increasing and non decreasing respectively, and are bounded on $[\beta, \alpha]$. By Dini's Theorem, we can ensure that both sequences converge uniformly on $\hat{J}$.

It is easily verified that the sequences $(\alpha_{n})_{n \in \mathbb{N}}$ and $(\beta_{n})_{n \in \mathbb{N}}$ converge to the extremal solutions $\phi = \mathcal{T}\phi$ and $\psi = \mathcal{T}\psi$ of Problem \eqref{promono}.
\end{proof}
In a similar manner, we arrive at the following Theorem.
\begin{theorem}
Assume that the following hypotheses are satisfied:
\begin{enumerate}
\item There exist $\alpha$ and $\beta$, a pair of lower solution and upper solution of Problem \eqref{promono}, such that $\alpha \leq \beta$ on $[-T,T]$.
\item The function $f$ is a Carathéodory function satisfying
\begin{equation*}
f(t,x_{1},y_{1}) - f(t,x_{2},y_{2}) \leq -m(x_{1} - x_{2}) - M(y_{1} - y_{2})
\end{equation*}
for almost every $t \in \hat{J}$ with $\alpha(-t) \leq x_{2} \leq x_{1} \leq \beta(-t)$ and $\alpha([t]) \leq y_{2} \leq y_{1} \leq \beta([t])$.
\item The pair $(m,M)$ satisfies that the Green's function $\overline{H}_{m,M}$ associated with Problem \eqref{im1} is negative. When $T \in (0,1]$, this holds for $m \in (-\frac{\pi}{4T}, \frac{\pi}{4T})$ and $-\frac{m}{2}(1 + \cot(mT)) < M < -m$.
\end{enumerate}
Then, there exist two monotone sequences $(\alpha_{n})_{n \in \mathbb{N}}$, $(\beta_{n})_{n \in \mathbb{N}}$, increasing and decreasing respectively, with $\alpha_{0} = \alpha$, $\beta_{0} = \beta$, which converge uniformly to the extremal solutions on $[\alpha, \beta]$ of \eqref{promono}, respectively.
\end{theorem}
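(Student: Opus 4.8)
The plan is to follow the proof of Theorem \ref{teomono} almost verbatim, reversing every inequality to absorb the two sign changes in the hypotheses: the monotonicity estimate in hypothesis~2 now points in the opposite direction, and $\overline{H}_{m,M}$ is negative rather than positive. For each $\gamma \in \mathcal{L}^{1}(\hat{J})$ with $\alpha \leq \gamma \leq \beta$ I would again introduce the linear auxiliary problem \eqref{promono2} and define the operator $\mathcal{T}$ exactly as in \eqref{aproxt}, so that $v = \mathcal{T}\gamma$ is its unique solution; this is legitimate because hypothesis~3 guarantees that $\overline{H}_{m,M}$ exists, and the Carathéodory bound together with $\alpha \leq \gamma \leq \beta$ makes the right-hand side of \eqref{promono2} an $\mathcal{L}^{1}(\hat{J})$ function. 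The whole argument then reduces to reading off signs in the representation $(\mathcal{T}\gamma)(t) = \int_{-T}^{T}\overline{H}_{m,M}(t,s)\,[\,\cdots\,]\,\mathrm{d}s$.

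First I would prove the two anchoring inequalities $\alpha \leq \mathcal{T}\alpha$ and $\mathcal{T}\beta \leq \beta$. Writing $v = \mathcal{T}\alpha$ and subtracting the equation defining $v$ from the lower-solution inequality for $\alpha$, the terms $m\alpha(-t)+M\alpha([t])$ cancel and one obtains
\[
(\alpha-v)'(t) + m(\alpha-v)(-t) + M(\alpha-v)([t]) = \alpha'(t) - f(t,\alpha(-t),\alpha([t])) \geq 0 \ \text{ a.e. } t \in \hat{J},
\]
with $(\alpha-v)(-T) = (\alpha-v)(T)$. Hence $(\alpha-v)(t) = \int_{-T}^{T}\overline{H}_{m,M}(t,s)\,g(s)\,\mathrm{d}s$ with $g \geq 0$, and since now $\overline{H}_{m,M} \leq 0$ the integrand is nonpositive, giving $\alpha \leq v = \mathcal{T}\alpha$. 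The symmetric computation for $\beta$, whose defect $f-\beta'$ has the opposite sign, yields $\mathcal{T}\beta \leq \beta$. In particular the iterates starting from $\alpha$ will increase and those starting from $\beta$ will decrease.

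Next I would show that $\mathcal{T}$ is nondecreasing on $[\alpha,\beta]$. For $\alpha \leq \gamma_1 \leq \gamma_2 \leq \beta$ and $v_i = \mathcal{T}\gamma_i$, subtracting the two auxiliary equations gives
\[
(v_2-v_1)'(t) + m(v_2-v_1)(-t) + M(v_2-v_1)([t]) = \big[f(t,\gamma_2(-t),\gamma_2([t])) - f(t,\gamma_1(-t),\gamma_1([t]))\big] + m(\gamma_2-\gamma_1)(-t) + M(\gamma_2-\gamma_1)([t]),
\]
and hypothesis~2, applied with $x_2 = \gamma_1(-t) \leq x_1 = \gamma_2(-t)$ and $y_2 = \gamma_1([t]) \leq y_1 = \gamma_2([t])$, forces the right-hand side to be $\leq 0$; multiplying by the negative Green's function gives $v_2 - v_1 \geq 0$, i.e. $\mathcal{T}\gamma_1 \leq \mathcal{T}\gamma_2$. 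Setting $\alpha_0 = \alpha$, $\beta_0 = \beta$, $\alpha_{n+1} = \mathcal{T}\alpha_n$ and $\beta_{n+1} = \mathcal{T}\beta_n$, an induction using monotonicity of $\mathcal{T}$ and the starting relation $\alpha \leq \beta$ produces the chain $\alpha_0 \leq \alpha_1 \leq \cdots \leq \beta_1 \leq \beta_0$. Thus $(\alpha_n)$ is nondecreasing and $(\beta_n)$ nonincreasing, both contained in $[\alpha,\beta]$, so they converge pointwise and, by Dini's Theorem, uniformly on $\hat{J}$. Continuity of $\mathcal{T}$ gives $\phi = \mathcal{T}\phi$ and $\psi = \mathcal{T}\psi$ for the limits, which are therefore solutions of \eqref{promono}; since any solution $u \in [\alpha,\beta]$ satisfies $u = \mathcal{T}u$ and is trapped between the iterates, one gets $\phi \leq u \leq \psi$, establishing that $\phi$ and $\psi$ are the extremal solutions.

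The work here is bookkeeping rather than conceptual: the delicate point is to check that the two reversals — the flipped inequality in hypothesis~2 and the negativity of $\overline{H}_{m,M}$ — recombine so that $\mathcal{T}$ stays order-preserving while the anchoring inequalities flip to $\alpha \leq \mathcal{T}\alpha$ and $\mathcal{T}\beta \leq \beta$. It is precisely this recombination that makes $\alpha \leq \beta$ (instead of the reversed ordering $\beta \leq \alpha$ of Theorem \ref{teomono}) the correct hypothesis, and verifying it carefully is the only step requiring attention.
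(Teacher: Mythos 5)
Your proof is correct and is exactly the argument the paper intends: the paper gives no proof for this theorem beyond the remark ``In a similar manner, we arrive at the following Theorem,'' and your write-up is precisely the mirror of the proof of Theorem \ref{teomono}, with the two sign reversals (the flipped one-sided Lipschitz condition and the negativity of $\overline{H}_{m,M}$) correctly recombining so that $\mathcal{T}$ remains order-preserving while the anchoring inequalities become $\alpha \leq \mathcal{T}\alpha$ and $\mathcal{T}\beta \leq \beta$. No gaps beyond those already present in the paper's own proof of the positive case (e.g.\ the appeal to Dini's theorem without separately verifying continuity of the limit).
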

Let us now look at a couple of practical examples where we can use the monotone method.
\begin{example}
Consider the Problem
\begin{equation}
v'(t)=\lambda \tanh{(t^2-2v(-t)+v([t]))}, \quad \textup{ a.e. } t \in \hat{J}, \quad v(-1)=v(1).
\label{ejemplomono2}
\end{equation}

It is easily verified that $\alpha \equiv 1$ and $\beta \equiv -1$ are lower solution and upper solution, respectively, for Problem \eqref{ejemplomono2} for all $\lambda \geq 0$. Moreover, $f(t,x,y) = \lambda \tanh{(t^2 - 2x + y)}$ satisfies $\left|\frac{\partial{f}}{\partial{x}}(t,x,y)\right| \leq 2 \lambda$ and $\left|\frac{\partial{f}}{\partial{y}}(t,x,y)\right| \leq \lambda$ for all $t \in \hat{J}$, $x$, $y \in \mathbb{R}$. Therefore, if we choose $m$ and $M \in \mathbb{R}$ such that the Green's function $\overline{H}_{m,M}$ with $T=1$ is positive and
\begin{equation*}
0 \leq \lambda \leq \min\{\frac{m}{2},M\},
\end{equation*}
by Theorem \ref{promono}, we can ensure that Problem \eqref{ejemplomono2} has extremal solutions on the sector $[-1,1]$.

From the previous section, we know that if $m = 1/2$ and $M = 1/5$, then $\overline{H}_{m,M} > 0$ for $T = 1$. Therefore, we will analyze the following problem.

\begin{equation}
v'(t)=\frac{\tanh{(t^2-2v(-t)+v([t]))}}{5}, \quad  t \in [-1,1], \quad v(-1)=v(1).
\label{ejemplomono2par}
\end{equation}

By using Python code, we calculate the sequences $(\alpha_{n})_{n \in \mathbb{N}}$ and $(\beta_{n})_{n \in \mathbb{N}}$ and plot them in the graphs of Figures \ref{ejemono22} and \ref{ejemono2}.

\begin{figure}[H] 
    \centering
    \subfigure[Sequence $(\alpha_{n})_{n \in \mathbb{N}}$ associated with Problem \eqref{ejemplomono2par}. We observe that the sequence is indeed non increasing.]{
        \includegraphics[width=0.45\textwidth]{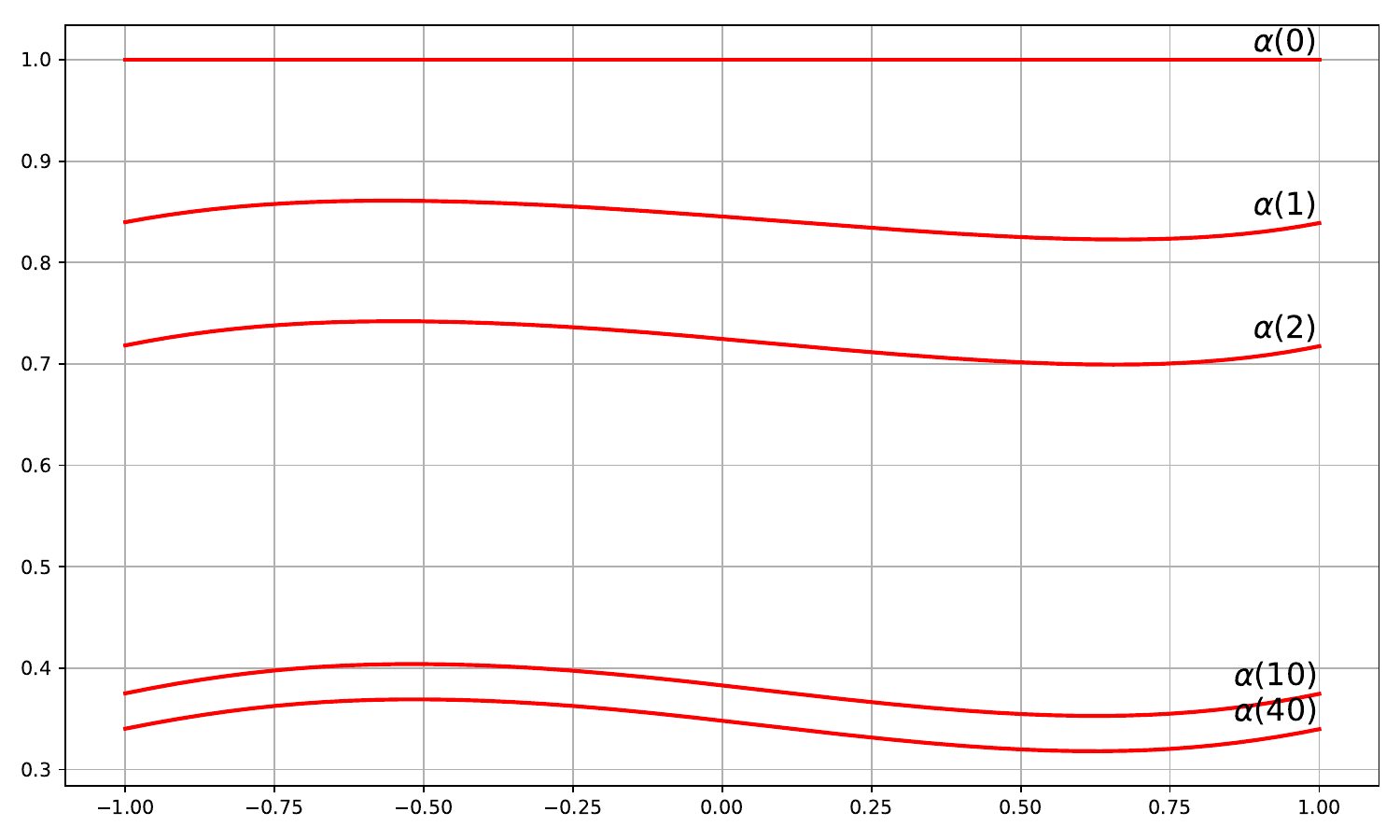}
    }
    \hspace{0.03\textwidth}
    \subfigure[Sequence $(\beta_{n})_{n \in \mathbb{N}}$ associated with Problem \eqref{ejemplomono2par}. We observe that the sequence is indeed non decreasing.]{
        \includegraphics[width=0.45\textwidth]{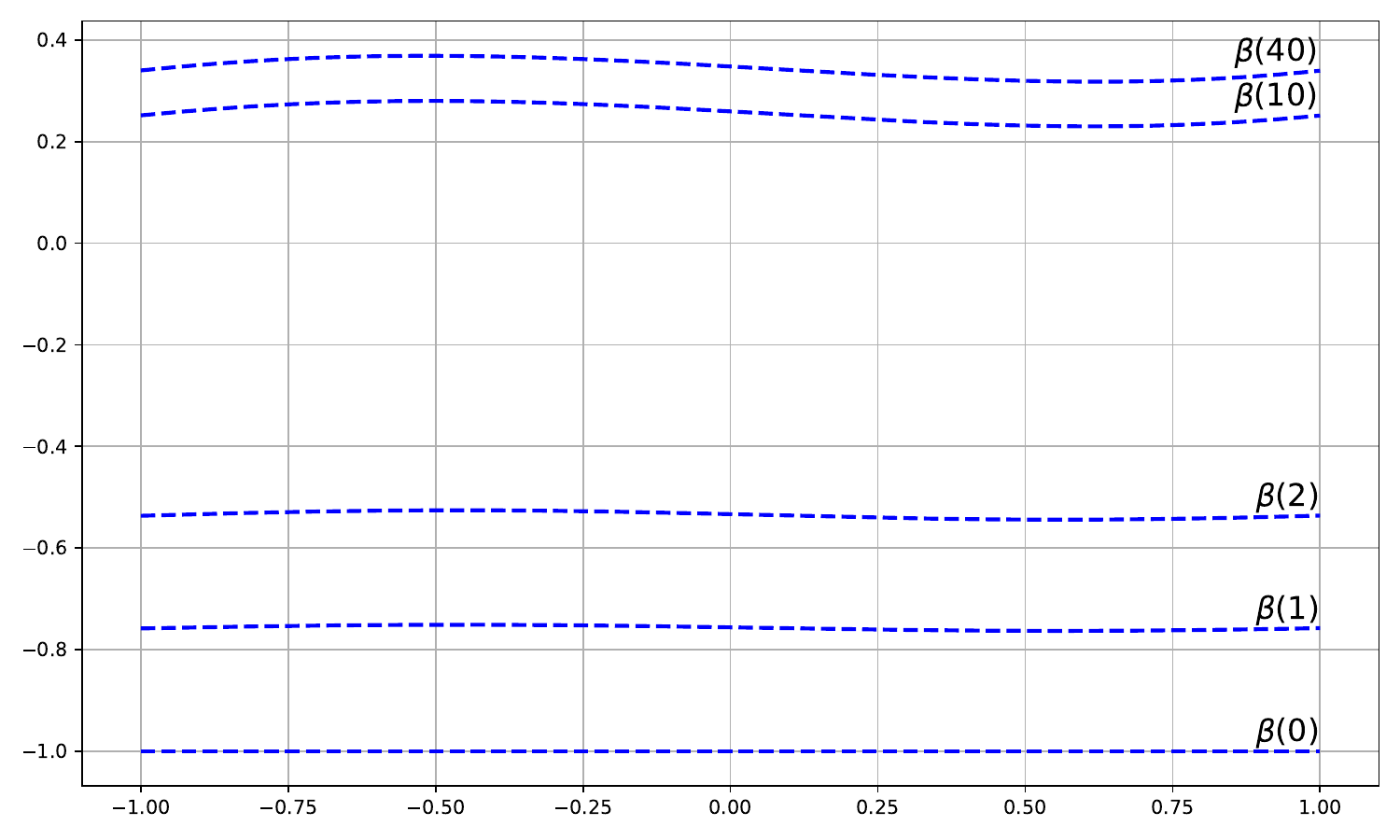}
    }
\caption{Practical application of the monotone method to the particular Problem \eqref{ejemplomono2par}. }
\label{ejemono22}
\end{figure}
\begin{figure}[H]
    \subfigure[Iterates $\alpha_{20}$ and $\beta_{20}$ of the sequences associated with Problem \eqref{ejemplomono2par}.]{
        \includegraphics[width=0.45\textwidth]{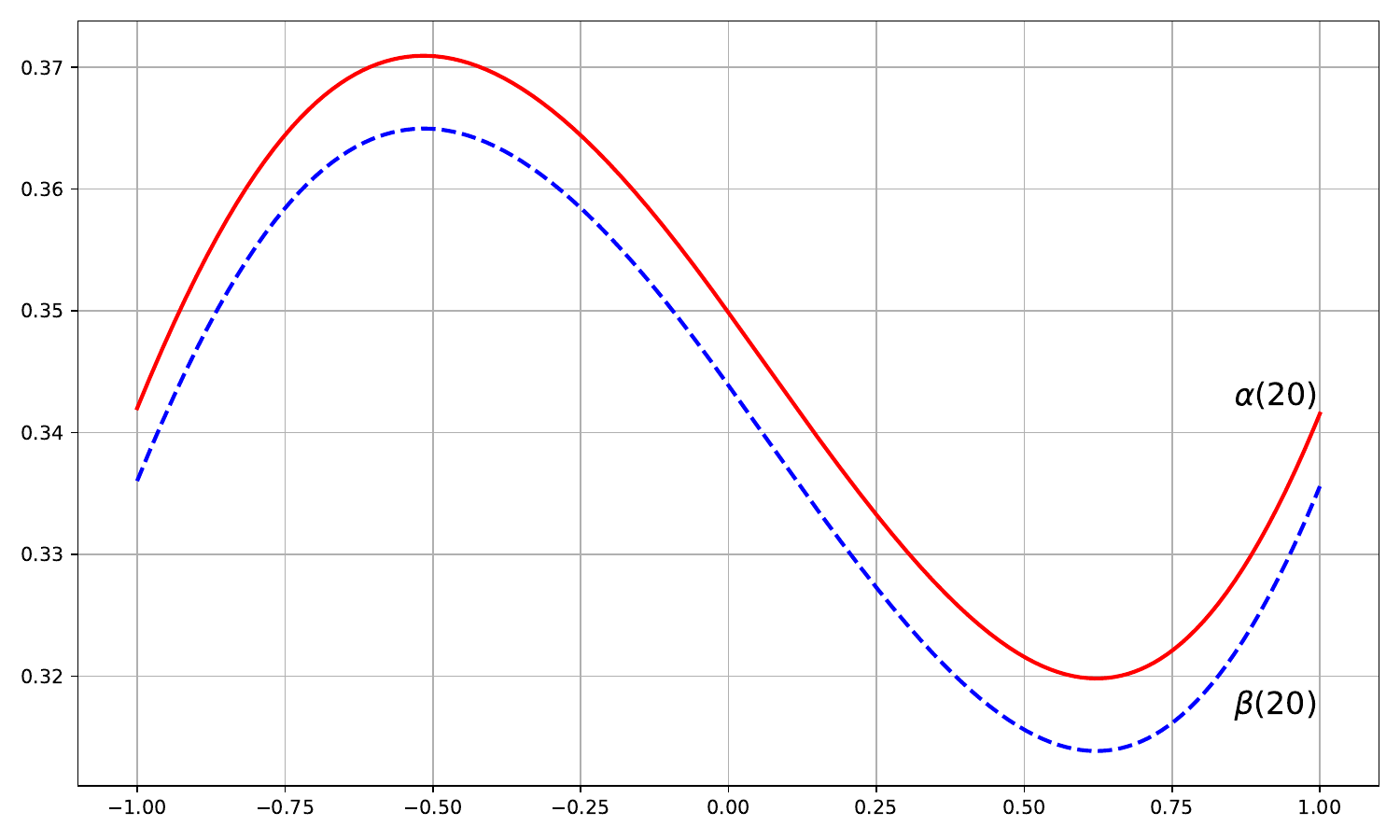}
    }
    \hspace{0.03\textwidth}
    \subfigure[Iterates $\alpha_{40}$ and $\beta_{40}$ of the sequences associated with Problem \eqref{ejemplomono2par}.]{
        \includegraphics[width=0.45\textwidth]{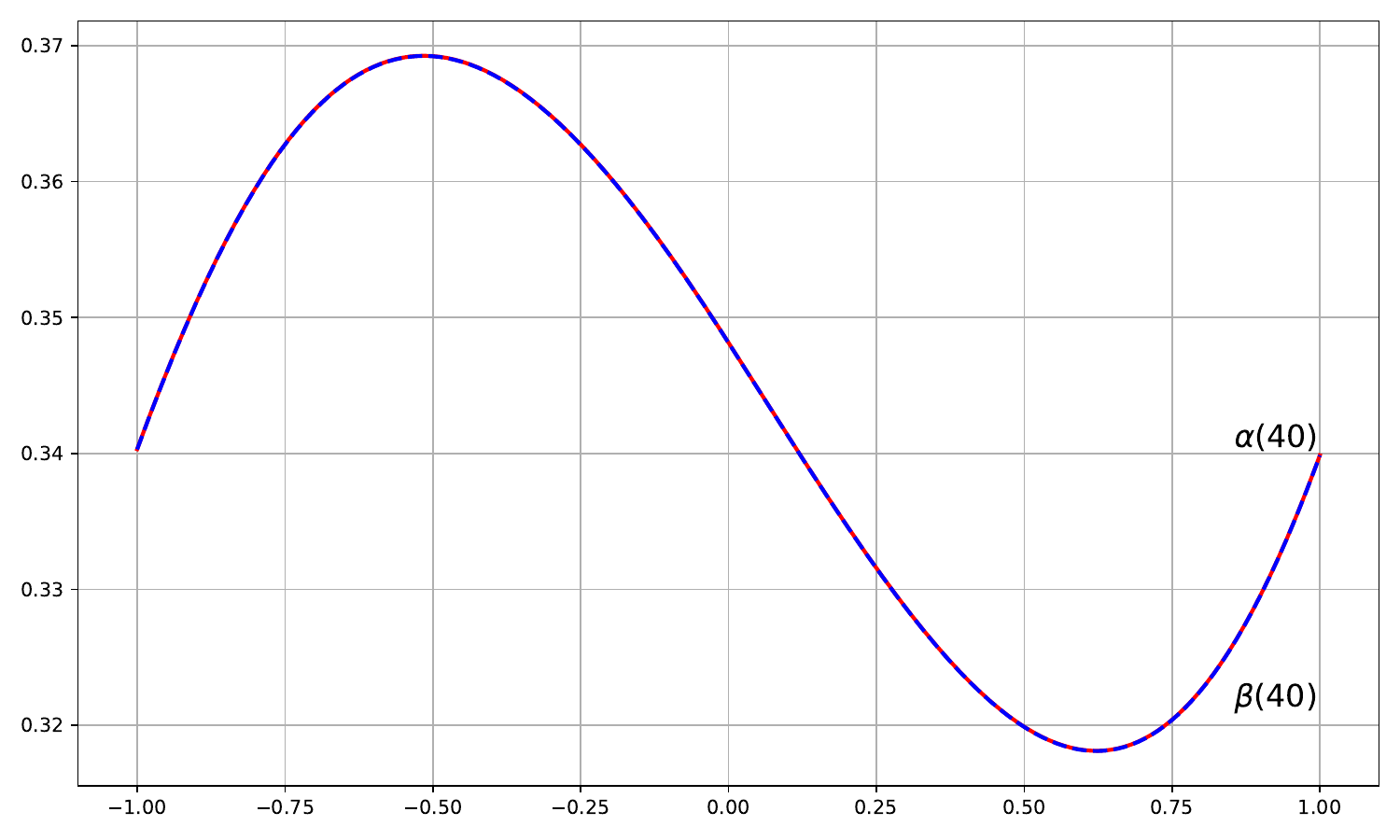}
    }
    \caption{Approximation of the extremal solutions of Problem \eqref{ejemplomono2par} and their similarity.}
    \label{ejemono2}
\end{figure}

Observing the previous Figure \ref{ejemono2}, we see that the extremal solutions appear to be identical, which allows us to conjecture that Problem \eqref{ejemplomono2par} has a unique solution on the sector $[-1,1]$.

\end{example}

\begin{example}
We now consider the problem
\begin{equation}
v'(t)=\lambda \tanh{(t-v(-t)-v([t]))}, \quad t \in \hat{J}, \quad v(-1.6)=v(1.6).
\label{ejemplomono}
\end{equation}

It is easy to verify that $\alpha \equiv \frac{T}{2}$ and $\beta \equiv -\frac{T}{2}$ are lower and upper solutions, respectively, for the problem \eqref{ejemplomono} for all $\lambda \geq 0$. Moreover, $f(t,x,y)=\lambda \tanh{(t-x-y)}$ satisfies that $\left|\frac{\partial f}{\partial{x}}(t,x,y)\right| \leq \lambda$ and $\left|\frac{\partial f}{\partial{y}}(t,x,y)\right| \leq \lambda$ for all $t \in \hat{J}$, $x$, $y \in \mathbb{R}$.

Numerically, we calculate the matrix $A$ related to Problem \eqref{im1} with $T=1.6$, $m=0.21$ and $M=0.2$, obtaining 
\begin{equation*}
A=
\begin{pmatrix}
    1.23 & 0.52 & 0.20 \\
   0.19 & 1.59 & 0.17\\
    0.15 & 0.67 & 1.13  
\end{pmatrix}
\end{equation*}
and we verify that the Green's function with these parameters is positive.

With all of the above, and applying Theorem \ref{teomono}, we can assert that Problem \eqref{ejemplomono} has extremal solutions on $[-T/2,T/2]$ for all
\begin{equation*}
0 \leq \lambda \leq \min\{m,M\}=\frac{1}{5}.
\end{equation*}

Now, we consider the particular problem
\begin{equation}
v'(t)=\frac{\tanh{(t-v(-t)-v([t]))}}{5}, \quad  t \in [-1.6,1.6], \quad v(-1.6)=v(1.6),
\label{monopar}
\end{equation}
and we create a program that allows us to obtain approximately two sequences $(\alpha_{n})_{n \in \mathbb{N}}$ and $(\beta_{n})_{n \in \mathbb{N}}$ defined as shown in Theorem \ref{teomono}, which should approximate the extremal solutions of the problem. Doing this, we obtain the following graphs.

\begin{figure}[H] 
    \centering
    \subfigure[Sequence of $(\alpha_{n})_{n \in \mathbb{N}}$ related to Problem \eqref{monopar}. We can see that the sequence is indeed decreasing.]{
        \includegraphics[width=0.45\textwidth]{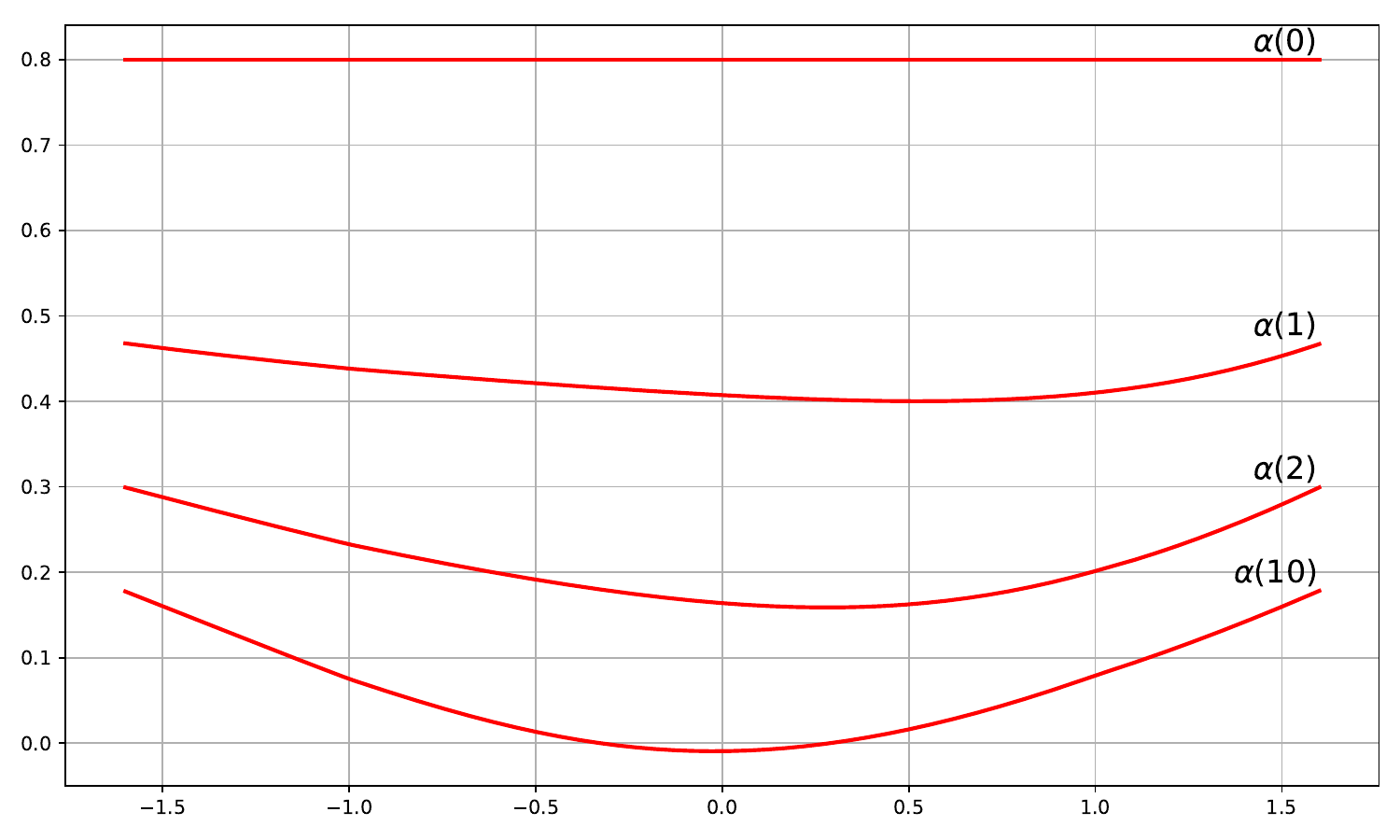}
    }
    \hspace{0.03\textwidth}
    \subfigure[Sequence of $(\beta_{n})_{n \in \mathbb{N}}$ related to Problem \eqref{monopar}. We can see that the sequence is indeed increasing.]{
        \includegraphics[width=0.45\textwidth]{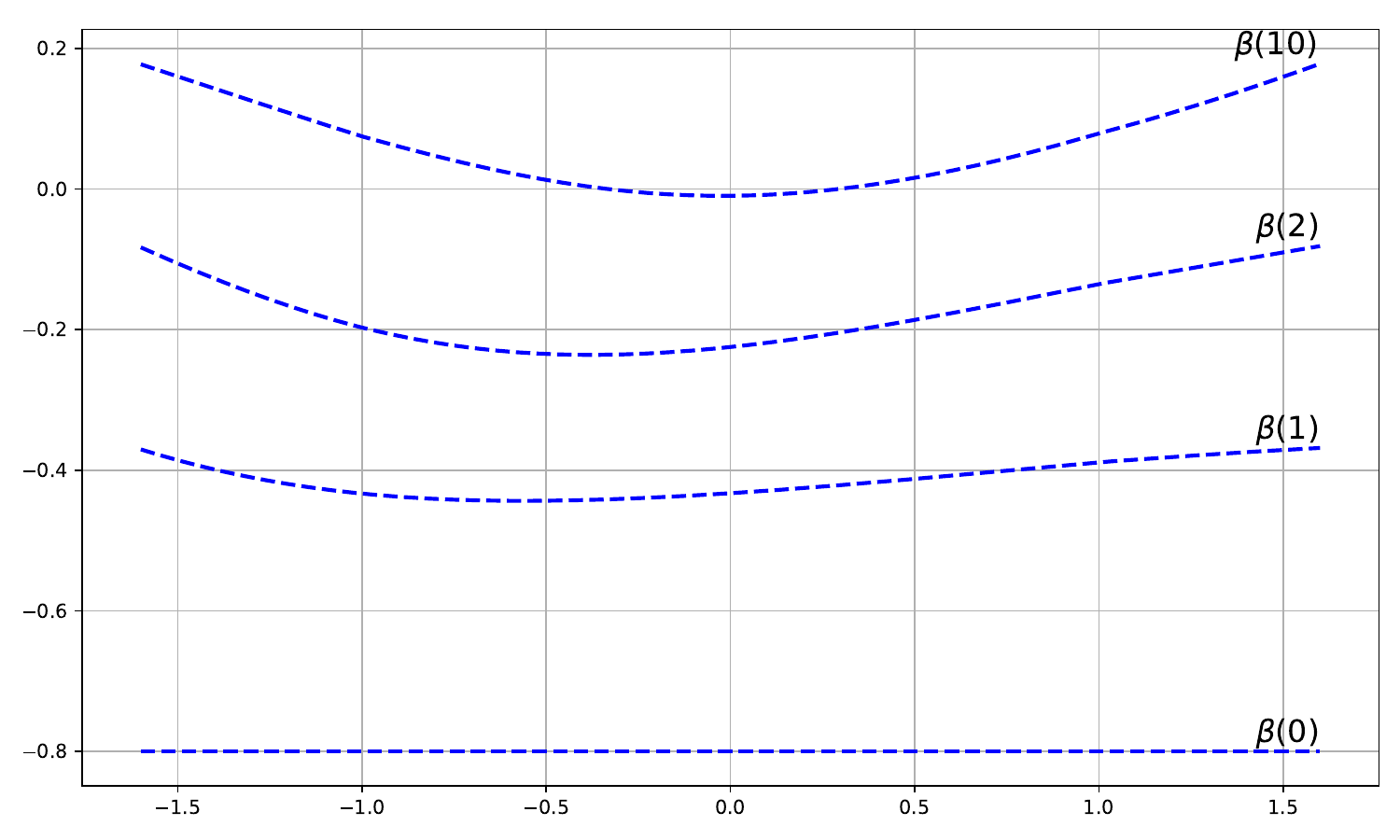}
    }
\caption{Practical application of the monotone method to the particular Problem \eqref{monopar}. }
\label{fig21}
\vspace{-0.6cm}
\end{figure}
\begin{figure}[H]
    \subfigure[Iterates $\alpha_{4}$ and $\beta_{4}$ of the sequences related to Problem \eqref{monopar}.]{
        \includegraphics[width=0.45\textwidth]{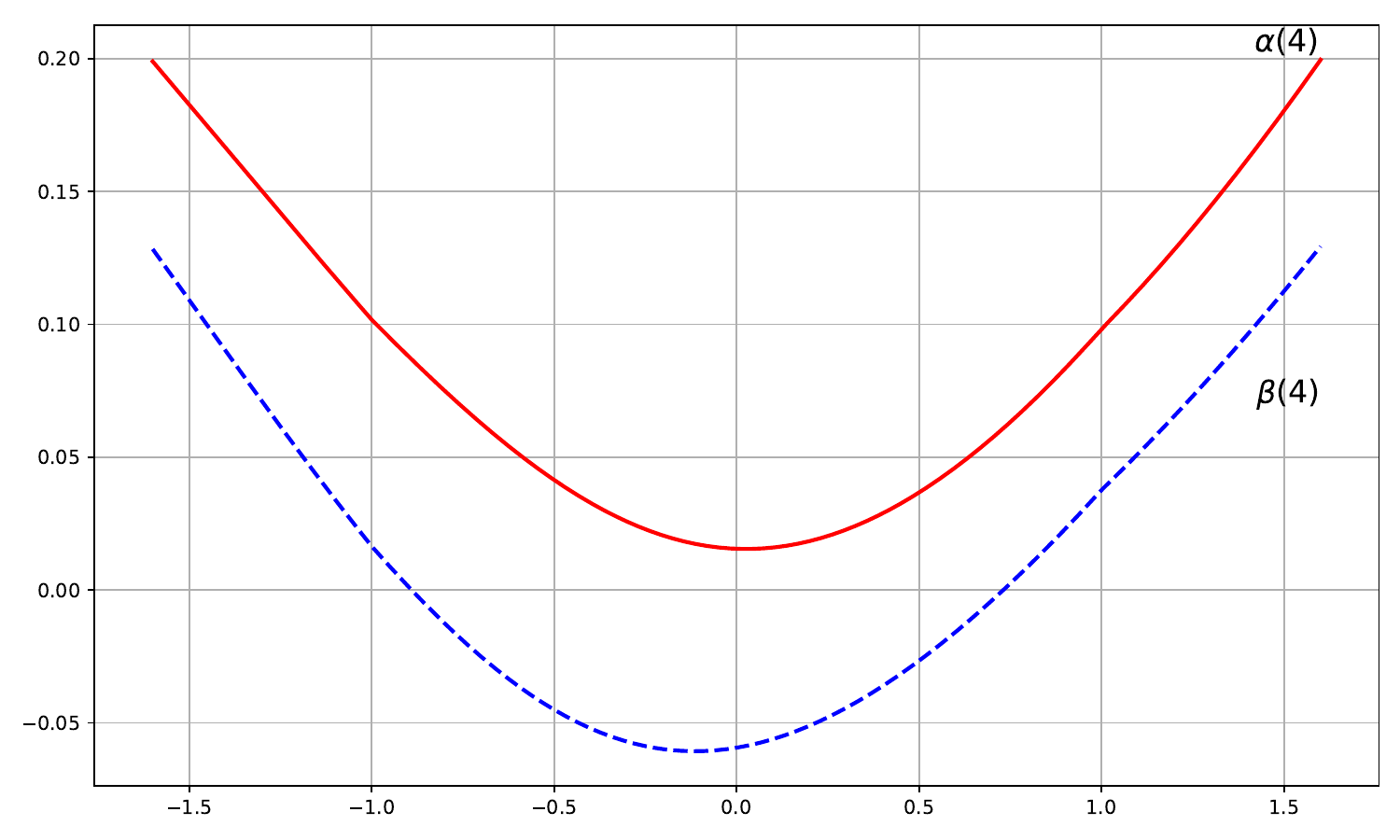}
    }
    \hspace{0.03\textwidth}
    \subfigure[Iterates $\alpha_{10}$ and $\beta_{10}$ of the sequences related to Problem \eqref{monopar}.]{
        \includegraphics[width=0.45\textwidth]{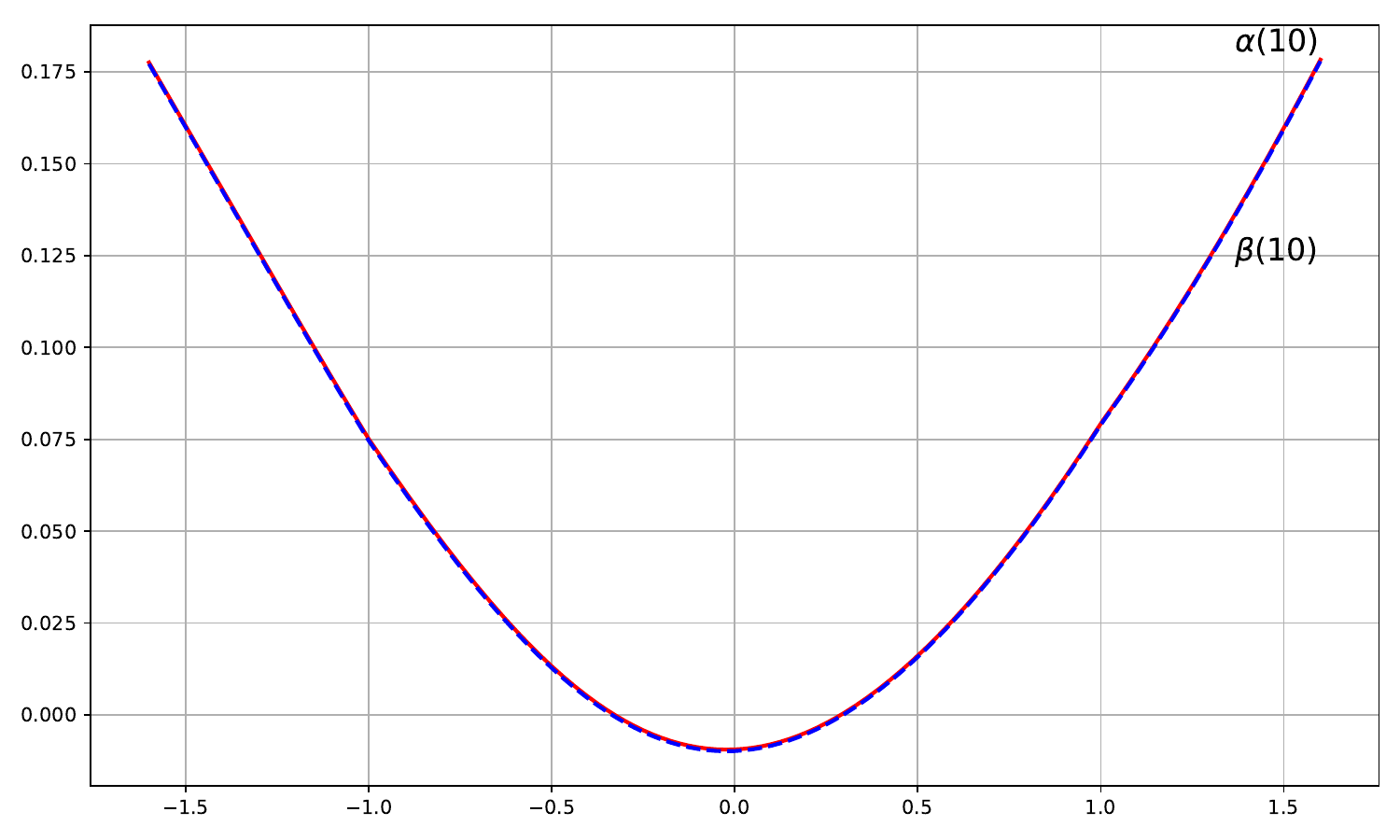}
    }
    \caption{Approximation of the extremal solutions of Problem \eqref{monopar} and their similarity.}
    \label{fig22}
\end{figure}


From the previous figures \ref{fig21} and \ref{fig22}, we can observe that the extremal solutions seem to be the same, from which we can conjecture that Problem \eqref{monopar} has a unique solution lying between the constant functions $-T/2$ and $T/2$.

\end{example}

\section*{Appendix}
\addcontentsline{toc}{section}{Appendix}

In this section, we present part of the numerical code used throughout the work.

\subsection*{Obtaining the function $\overline{H}_{m,M}$}
First, we show how to numerically define the function $\overline{H}_{m,M}$ that satisfies equation \eqref{final1}.

\begin{lstlisting}[language=Python, caption={Numerical calculation of the function $H_{m,M}$.}, label={lst funcion}]
import numpy as np
from scipy.integrate import quad

def H(t, s, m, TT, MM):
    def custom_floor(x):
        return np.floor(x) if x >= 0 else np.ceil(x)
    
    def integrand(s, i, m, TT, MM):
        return MM * G(i - int(custom_floor(TT)) - 1, s, m, TT)
    
    def funcion(m, MM, TT):
        n = 2 * int(custom_floor(TT)) + 1
        matriz = np.zeros((n, n))
        for i in range(1, n+1):
            for j in range(1, n+1):
                if j == 1:
                    matriz[i-1, j-1], _ = quad(integrand, -TT, -custom_floor(TT), args=(i, m, TT, MM))
                elif j == (n + 1) // 2:
                    matriz[i-1, j-1], _ = quad(integrand, -1, 1, args=(i, m, TT, MM))
                elif j == 2 * int(custom_floor(TT)) + 1:
                    matriz[i-1, j-1], _ = quad(integrand, custom_floor(TT), TT, args=(i, m, TT, MM))
                elif j < (n + 1) // 2:
                    s1 = int(custom_floor(j - TT - 2))
                    s2 = int(custom_floor(j - TT - 1))
                    matriz[i-1, j-1], _ = quad(integrand, s1, s2, args=(i, m, TT, MM))
                elif j > (n + 1) // 2:
                    s1 = int(custom_floor(j - TT))
                    s2 = int(custom_floor(j - TT + 1))
                    matriz[i-1, j-1], _ = quad(integrand, s1, s2, args=(i, m, TT, MM))
        inversa = np.linalg.inv(matriz + np.eye(n))
        
        def alpha(r):
            alpha_matrix = np.zeros((n, n))
            for i in range(1, n+1):
                for j in range(1, n+1):
                    if i == 1:
                        alpha_matrix[i-1, j-1] = inversa[i-1, j-1] * (1 if -TT <= r <= custom_floor(-TT) else 0)
                    elif i == n:
                        alpha_matrix[i-1, j-1] = inversa[i-1, j-1] * (1 if custom_floor(TT) <= r <= TT else 0)
                    elif i == (n + 1) // 2:
                        alpha_matrix[i-1, j-1] = inversa[i-1, j-1] * (1 if -1 <= r <= 1 else 0)
                    elif i < (n + 1) // 2:
                        alpha_matrix[i-1, j-1] = inversa[i-1, j-1] * (1 if custom_floor(i - TT - 2) <= r <= custom_floor(i - TT - 1) else 0)
                    elif i > (n + 1) // 2:
                        alpha_matrix[i-1, j-1] = inversa[i-1, j-1] * (1 if custom_floor(i - TT) <= r <= custom_floor(i - TT + 1) else 0)
            return alpha_matrix
        
        def filalpha(r):
            return np.sum(alpha(r), axis=0)
        
        def sumatorio(s, r):
            return sum(G(j - int(custom_floor(TT)) - 1, s, m, TT) * filalpha(r)[j-1] for j in range(1, n+1))
        
        return lambda s, r: sumatorio(s, r)
    
    sumatorio_func = funcion(m, MM, TT)
    integrand = lambda r: G(t, r, m, TT) * sumatorio_func(s, r)
    integral, _ = quad(integrand, -TT, TT)
    
    return G(t, s, m, TT) - MM * integral
\end{lstlisting}

In this code, we follow the steps outlined in Section \ref{sec3}. We compute the elements of the matrix $A$ given in \eqref{matriz}, its inverse, and the elements $\alpha_{ij}(r)$ given in \eqref{g2}. With all these components, and following equation \eqref{final1}, we obtain the function $\overline{H}_{m,M}$.

\subsection*{Numerical approximation of the constant sign of $\overline{H}_{m,M}$}

In this case, we will present the code necessary to approximate the region where $\overline{H}_{m,M}$ is positive when $T=1.6$. We will deal with the case when $m>0$ and $M>0$, or $m<0$ and $M>0$. The case $m<0$ and $M>0$ could be addressed as a combination of both.

\begin{lstlisting}[language=Python, caption={Code for the numerical approximation of the region where $H_{m,M}> 0$ with $m>0$ and $M>0$.}, label={lst:approx-region}]
import numpy as np
import matplotlib.pyplot as plt
from scipy.interpolate import RegularGridInterpolator
from joblib import Parallel, delayed
mmRange = np.arange(0.001, 0.52, 0.01)
MMRange = np.arange(0, 0.79, 0.01)
TT = 1.6 
epsilon = 10**-6
sValues = np.array([-TT, -1 - epsilon, -1 + epsilon, 0 - epsilon, 
                    0 + epsilon, 1 - epsilon, 1 + epsilon, TT])
MMGrid, mmGrid = np.meshgrid(MMRange, mmRange)

def minh(mm, MM, TT, sValues):
    H_values = [H(s, s+epsilon, mm, TT, MM) for s in sValues]
    min_H = min(H_values)
    return min_H

def calculate(mm, MM):
    return mm, MM, minh(mm, MM, TT, sValues)

resultsmM = Parallel(n_jobs=-1)(delayed(calculate)(mm, MM) for mm, MM in zip(mmGrid.ravel(), MMGrid.ravel()))
resultsmM1 = np.array(resultsmM)
minM0L_values = resultsmM1[:, 2].reshape(len(mmRange), len(MMRange))

f_interp = RegularGridInterpolator(
    (mmRange, MMRange),
    minM0L_values,
    fill_value=0  
)

mmRange1 = np.arange(0.001, 0.50, 0.001)
MMRange1 = np.arange(0, 0.78, 0.001)
M, m = np.meshgrid(MMRange1, mmRange1)
f_values = f_interp((m, M))  
condition = f_values > 0
condition_new = M > -m
combined_condition = np.logical_and(condition, condition_new)
plt.figure(figsize=(8, 6))
plt.contourf(m, M, combined_condition, levels=1, cmap='viridis') 
plt.xlabel('M')
plt.ylabel('m')
plt.show()
\end{lstlisting}

In this case, we follow the steps outlined in Section \ref{secnume} for the case where $m>0$ and $M>0$. For different pairs of values $(m,M)$, we evaluate the function $\overline{H}_{m,M}(s,s+\varepsilon)$ with integer values of $s=n$ (taking $n^-$ and $n^+$) and select the minimum. To optimize the process, we parallelize this calculation and store the triplets of values for $m$, $M$, and the corresponding minimum. Next, we interpolate a function that provides the minimum for each value of $m$ and $M$ within the range. Finally, we plot the region where this function is positive and $M>-m$, which will coincide with the area where the function $\overline{H}_{m,M}$ is positive.

\begin{lstlisting}[language=Python, caption={Code for the numerical approximation of the region where $H_{m,M}> 0$ with $m<0$ and $M>0$.}, label={lst:approx-region2}]
import numpy as np
import matplotlib.pyplot as plt
from scipy.interpolate import interp2d
import numpy as np
from scipy.interpolate import RegularGridInterpolator
from joblib import Parallel, delayed

TT = 1.6
epsilon = 10**-6 
tRange1 = np.arange(-1.6, 1.7, 0.1)
tValues = np.array([-1 - 2*epsilon, -1 - epsilon/2, -2*epsilon, epsilon/2, 
                    1 - 2*epsilon, 1 + epsilon/2])
tRange = np.sort(np.concatenate([tRange1, tValues]))
sValues = np.array([-TT, -1 - epsilon, -1 + epsilon, 0 - epsilon, 
                    0 + epsilon, 1 - epsilon, 1 + epsilon, TT])
                    
def precompute_integrals(mm, TT, tRange):
    G = lambda t, r: Gbar(t, r, mm, TT)
    integrals = []
    for t in tRange:
        integral_1, _ = quad(lambda r: G(t, r), -TT, -1)
        integral_2, _ = quad(lambda r: G(t, r), -1, 1)
        integral_3, _ = quad(lambda r: G(t, r), 1, TT)      
        integrals.append([integral_1, integral_2, integral_3])    
    return np.array(integrals)
    
def precompute_H(mm, TT, MM, tRange, sValues):
    Ha = lambda t, s: H(t, s, mm, TT,MM)
    Hvals = []
    for s in sValues:
        Hvals.append([Ha(-1, s), Ha(0, s), Ha(1, s)])    
    return np.array(Hvals)
    
def calc_M0L2(mm, MM, TT, tRange, sValues):
    integrals = precompute_integrals(mm, TT, tRange)
    Hvals = precompute_H(mm, TT, MM, tRange, sValues)
    G = lambda t, r: Gbar(t, r, mm, TT)
    M0L = np.zeros((len(sValues), len(tRange)))
    for i, s in enumerate(sValues):
        for j, t in enumerate(tRange):
            tIndex = np.where(tRange == t)[0][0]
            sIndex = np.where(sValues == s)[0][0]
            M0L[i, j] = G(t, s) / (
                Hvals[sIndex, 0] * integrals[tIndex, 0] +
                Hvals[sIndex, 1] * integrals[tIndex, 1] +
                Hvals[sIndex, 2] * integrals[tIndex, 2]
            )
    max_M0L = np.max(M0L)   
    return max_M0L

mmRange2 = np.arange(0.45, 0.79, 0.01)
MMRange2 = np.arange(-0.79, 0.02, 0.01)
MMGrid2, mmGrid2 = np.meshgrid(MMRange2, mmRange2)

def calculate_minM0L(mm, MM):
    return mm, MM, calc_M0L(mm, MM, TT, tRange, sValues)

results2 = Parallel(n_jobs=-1)(delayed(calculate_minM0L)(mm, MM) for mm, MM in zip(mmGrid2.ravel(), MMGrid2.ravel()))

results20 = np.array(results2)

minM0L_values2 = results20[:, 2].reshape(len(mmRange2), len(MMRange2))

f_interp2 = RegularGridInterpolator(
    (mmRange2, MMRange2),
    minM0L_values2,
    fill_value=np.nan  
)

mmRange22 = np.arange(0.48, 0.780, 0.0001)
MMRange22 = np.arange(-0.78, 0.01, 0.0001)
M2, m2 = np.meshgrid(MMRange22, mmRange22)

f_values2 = f_interp2((m2, M2)) 

condition2 = f_values2 > M2
condition22=M2>-m2
plt.figure(figsize=(8, 6))
plt.contourf(m2, M2, condition2,condition22, levels=1, cmap='viridis')
plt.xlabel('M')
plt.ylabel('m')
plt.show()
\end{lstlisting}

In this case, we again follow the steps outlined in Section \ref{secnume}, but now for $m<0$ and $M>0$. We create an array with integer values of $s=n$ (taking $n^-$ or $n^+$) and the extreme values of $s$ ($s=T$ and $s=-T$), as well as another array with values of $t$ in the interval $(-T,T)$, being careful at the points where $t=s$, where $\overline{H}_{m,M}$ is not well-defined. Next, for different values of $m$ and $M$ in the range of interest, we calculate the maximum of equation \eqref{maxt}. To do this, we will first create various functions that allow us to pre-calculate different elements, avoiding unnecessary repetitions. To optimize the process, we parallelize the calculation of the maximum and store the values of the triplet $m$, $M$, and the maximum. Then, we interpolate a function that, for each value of $m$ and $M$ in the considered range, gives us the maximum. Finally, we represent the region where this function is greater than $M$ and $M>-m$. We have already proved that this region  coincides with the area where the function $\overline{H}_{m,M}$ is positive.

\subsection*{Approximation of the extremal solutions with the monotone method}

Finally, we will present the code necessary to approximate the sequences $(\alpha_{n})_{n \in \mathbb{N}}$ and $(\beta_{n})_{n \in \mathbb{N}}$ associated with Problem \eqref{monopar}.

\begin{lstlisting}[language=Python, caption={Code for the extremal solutions of the monotone method.}, label={lst:approx-region2}]
from joblib import Parallel, delayed
import numpy as np
import matplotlib.pyplot as plt

n1 = 256 
deltaX = 3.2 / (2 * n1) 
n_steps = 10  
def f(t, v, w):
    return 0.2 * np.tanh(t  - v - w) + 0.21 * v + 0.2 * w

k_values = np.arange(-n1, n1 + 1)
x_coords = k_values * deltaX

def compute_H_element(k, j, deltaX):
    return H(k * deltaX, j * deltaX, 0.21, 1.6, 0.20)

H_matrix = Parallel(n_jobs=-1)(delayed(compute_H_element)(k, j, deltaX) 
                               for k in k_values for j in k_values)
H_matrix = np.array(H_matrix).reshape((2 * n1 + 1, 2 * n1 + 1))

x = np.zeros((n_steps + 1, 2 * n1 + 1))
x[0, :] = 1.6 / 2  
for n in range(1, n_steps + 1):
    for k_idx, k in enumerate(k_values):
        sum_val = 0
        for j_idx, j in enumerate(k_values):
            t = j * deltaX
            v = x[n - 1, -j_idx]
            w = x[n - 1, int(custom_floor(j_idx))]
            weight = 0.5 if (j_idx == 0 or j_idx == len(k_values) - 1) else 1.0
			sum_val += weight * H_matrix[k_idx, j_idx] * f(t, v, w)
        x[n, k_idx] = 1.6*sum_val / n1

y = np.zeros((n_steps + 1, 2 * n1 + 1))
y[0, :] = -1.6 / 2 
for n in range(1, n_steps + 1):
    for k_idx, k in enumerate(k_values):
        sum_val = 0
        for j_idx, j in enumerate(k_values):
            t = j * deltaX
            v = y[n - 1, -j_idx]
            w = y[n - 1, int(custom_floor(j_idx))]
            weight = 0.5 if (j_idx == 0 or j_idx == len(k_values) - 1) else 1.0
			sum_val += weight * H_matrix[k_idx, j_idx] * f(t, v, w)
        y[n, k_idx] = 1.6*sum_val / n1
\end{lstlisting}

To approximate the sequences $(\alpha_{n})_{n \in \mathbb{N}}$ and $(\beta_{n})_{n \in \mathbb{N}}$, we start with $\alpha_{0}=T/2$ and $\beta_{0}=-T/2$, and calculate the successive elements iteratively such that $\alpha_{n+1}=\mathcal{T}(\alpha_{n})$ and $\beta_{n+1}=\mathcal{T}(\beta_{n})$, where $\mathcal{T}$ is given by equation \eqref{aproxt}. We will approximate the integral using the trapezoidal rule, with $2n_{1}$ being the number of intermediate points and deltaX the step between each point. In this case, to obtain an accurate result, we only need to calculate $n=10$ iterations for each sequence.

To optimize the process, we will precompute the evaluations of the function $\overline{H}_{m,M}$ that we need and store them in a matrix. 

We find that by approximating the integral with the trapezoidal rule, we achieve satisfactory results. Otherwise, we could try using other methods, such as Simpson's or Monte Carlo methods.

	\section*{Acknowledgements}
	 The authors were partially supported by Grant PID2020-113275GB-I00, funded by\\ MCIN/AEI/10.13039/501100011033 and by “ERDF A way of making Europe” of the “European Union”, and by Xunta de Galicia (Spain), project ED431C 2023/12.


\bibliography{bib5} 
\bibliographystyle{spmpsciper}
\markboth{BIBLIOGRAFÍA}{}

\end{document}